\newtheorem*{mainthm}{Main Theorem}
\newtheorem{theorem}{Theorem}[section]
\newtheorem{lemma}[theorem]{Lemma}
\newtheorem{proposition}[theorem]{Proposition}
\newtheorem{claim}[theorem]{Claim}
\theoremstyle{definition}
\newtheorem{remark}[theorem]{Remark}
\newtheorem{example}[theorem]{Example}
\newtheorem{definition}[theorem]{Definition}
\newtheorem{notation}[theorem]{Notation}
\newcommand{\Frob}{\textrm{Frob}}
\newcommand{\Gal}{\textrm{Gal}}
\newcommand{\Q}{\mathbb{Q}}
\newcommand{\R}{\mathbb{R}}
\newcommand{\F}{\mathbb{F}}
\newcommand{\Z}{\mathbb{Z}}
\newcommand{\Hom}{\operatorname{Hom}}
\newcommand{\Ad}{\textrm{ad}}
\newcommand{\e}{{\bf\textrm{e}}}
\newcommand{\B}{\mathfrak{B}}
\newcommand{\m}{\mathfrak{m}}
\newcommand{\rep}{\textrm{Rep}}
\newcommand{\T}{\pmb{t}}
\newcommand{\rb}{\overline\rho}
\newcommand{\gen}{\textrm{gen}}
\newcommand{\cD}{\mathcal{D}}
\newcommand{\cF}{\mathcal{F}}
\numberwithin{equation}{section}
\begin{document}

\title{Lifting $N$-dimensional  Galois representations to characteristic zero}
\author{Jayanta Manoharmayum}
\address{School of Mathematics and Statistics, University of
    Sheffield,
    Sheffield S3 7RH,
    U.K.}
\email{J.Manoharmayum@sheffield.ac.uk}
\date{}
\subjclass[2000]{11F80}

\begin{abstract} Let $F$ be a number field, let $N\geq 3$ be an integer, and let $k$ be a
finite field of characteristic $\ell$. We show that if
 $\rb:G_F\longrightarrow GL_N(k)$ is a continuous representation
with image of $\rb$ containing $SL_N(k)$ then, under moderate conditions at primes dividing
$\ell\infty$,  there is a continuous representation
$\rho:G_F\longrightarrow GL_N(W(k))$ unramified outside finitely many primes with
$\rb\sim\rho\ \textrm{mod}\ \ell$.\end{abstract}

\maketitle

\section{Introduction}
Let $F$ be a number field. Suppose we are given a continuous $\ell$-adic
representation
$\rho:G_F\longrightarrow GL_N(\overline\Q_\ell)$
unramified outside finitely many places. The representation $\rho$ then takes values in
a finite
extension of $\Q_\ell$, and on reducing modulo $\ell$  a stable lattice one gets a
continuous representation $\rb: G_F\longrightarrow GL_N(\overline\F_\ell)$,
 unique up to semi-simplification. Conversely,
one can ask if a given mod $\ell$ representation $\rb: G_F\longrightarrow GL_N(\overline\F_\ell)$ is necessarily the reduction of
an $\ell$-adic representation. This was answered in the affirmative  when $N=2$ by  Ramakrishna
in
\cite{ramakrishna}. In this article, we generalise the
method of
Ramakrishna, {\it loc. cit.}, to $N\geq 3$ and provide  an answer to the finding
characteristic
zero lifts when the image of $\rb$  and the residue characteristic $\ell$ are
`big'.

Before we describe the main result of this article, we recall  that if
$\rho: G_F\longrightarrow GL_N(A)$ is a representation then
 $\Ad\rho$ (resp. $\Ad^0\rho$) is the $A[G_F]$-module consisting of $N\times N$ matrices over $A$
(resp. $N\times N$ matrices over $A$ with trace $0$)  with the action of $g\in G_F$ on a
matrix $M$
given by $\rho(g)M\rho(g)^{-1}$. Let's  also recall that the   representation $\rho: G_F\longrightarrow GL_N(A)$ above
is said to be totally even
if the projective image of the decomposition group at each infinite place of $F$ is
trivial.

\begin{mainthm} Fix an integer $N\geq 3.$ Let $k$ be a finite field of characteristic $\ell$, and let $\rb: G_F\longrightarrow GL_N(k)$ be a continuous representation of the absolute Galois group of a number field $F$. Let $W:=W(k)$ denote the Witt
 ring of $k$, and fix a continuous character
 $\chi:G_F\longrightarrow W^\times$
lifting the determinant of $\rb$ (i.e. $\chi\pmod{\ell}=\det\rb$). Assume  that:
\begin{enumerate}
\item The image of $\rb$ contains $SL_N(k)$;
\item $\rb$ is not totally even;
\item If $v$ is a place of $F$ lying above $\ell$ then
$H^0\left(G_{F_v},\Ad^0\rb(1)\right)=(0).$
\end{enumerate}
Suppose that  $\ell>N^{3[F:\Q]N}$. There then exists a global deformation condition
$\cD$
with determinant $\chi$ for $\rb$  such that the universal deformation ring
for type
$\cD$ deformations of $\rb$ is a power series ring over $W$ in at
least $N-2$
variables.  In particular, there is a continuous representation
$\rho:G_F\longrightarrow GL_N(W)$
with determinant $\chi$ satisfying the following properties:
\begin{itemize}
\item $\rho \pmod{\ell}\sim \rb$; and, \item $\rho$ is unramified outside
finitely many
primes.
\end{itemize}
\end{mainthm}
We can remove the local hypothesis at $\ell$ and say more when the number field is $Q$
and $N=3$.  More precisely, let
 $\rb:G_\Q\longrightarrow GL_3(k)$ satisfy the first two conditions of the main
 theorem (so $\rb$ is odd and its image contains $SL_3(k)$).
Then $\rb$ has a lifting to $GL_3(W(k))$ whenever $\ell\geq 13$ or $\ell=7$ and  the
fixed field of $\Ad^0\rb$ does not
contain $\cos(2\pi/7).$ See Theorem~\ref{lifting GL3}.

Essentially, the claim made above is that a  residual Galois representation
with big image (including the assumption that $\ell$ is large) and good properties at
$\ell$ admits characteristic zero liftings.
We follow Mazur's development  of deformation theory as presented in \cite{mazur1};
a brief working recall of the main definitions is given in section \ref{prelims}.

The basic organisational principle underlying our approach is a beautiful result of
B\"{o}ckle,
\cite[Theorem 4.2]{boeckle2},
relating the structure of a universal deformation ring to its local (uni)versal
components. For a  precise statement see Theorem
\ref{thmboeckle} in section \ref{prelim global}. The problem thus becomes one of
finding a global deformation condition with smooth local components and trivial dual
Selmer group.
It is perhaps worth noting here that
the two
requirements are not completely independent of each other (as can be seen from the
discussion in section \ref{prelim global}).
Ramakrishna's great insight, in the $GL_2$ case, is to show how to reduce
the size of the dual Selmer group by a clever tweaking of the global deformation
condition at some primes. We extend this strategy.

There are two key
ingredients in being able to make such an extension. Firstly, we prove a
cohomological
result which gives conditions under which a subspace of $H^1(G_F,M)$ can be
distinguished by its
restriction at a prime. This provides us with a collection of primes where an
adjustment of the local condition can result in a smaller dual Selmer group.
Secondly, we need to produce enough local deformations  for the
restriction of
$\rb$ to a local decomposition group at a prime  $v\nmid\ell$. There are complications
when the residue characteristic of $F_v$ is relatively small (for instance, when the residue
characteristic is not
bigger than $N$), and we avoid these by assuming
$[F_v(\zeta_\ell):F_v]\geq 3N$. (See Theorem \ref{main local}.) The  condition
 $\ell>N^{3[F:\Q]N}$ is an easy---but not an economic---bound that allows us to avoid local complications at small primes for general $N$, $\ell$.

While the hypothesis at  primes above $\ell$ ensures that we do not have to deal with
the
more difficult problem of studying local deformations at $\ell$, it does still cover  a
wide range of examples. Note that  the
hypothesis at a prime $v|\ell$ is equivalent to the assumption that the only
$G_{F_v}$-equivariant homomorphism from $\rb$ to $\rb(1)$ is the zero map.
The exceptions can be easily classified for small $N$, and we do so for the case
when $N=3$ and $F=\Q$. We do not attempt to
put any geometric condition as the representations we are looking at might not
even have the right duality property (to link up with automorphic forms).

A similar generalisation of Ramakrishna's lifting technique to $GL_N$ was also
obtained by Hamblen,  \cite{hamblen}, about the same time when an earlier version of
this article was first prepared. Even so, we hope that this article  still carries an
interest. For one, the
results
are different  (Hamblen assumes the ground field to be $\Q$ and uses different local
conditions). Additionally, we hope that the study of local deformations presented here, in particular the existence of smooth deformations, has independent merit. Although some of the local analysis also appears in \cite{CHT}, there is a difference in approach (for instance in the study of tamely ramified deformations and also in the role of tensor product of deformations). 

\begin{notation}
The $\ell$-adic cyclotomic character is always denoted by $\omega$ and $\overline\omega$ is the
mod $\ell$-cyclotomic character. The term
`prime' on its own  always indicates a finite prime except when the context makes it
clear that we are also including infinte primes. If $F$ is a number field, we assume we are given fixed
embeddings $\overline F\hookrightarrow \overline F_v$ for each prime $v$ (including the infinite ones).  If $F$ is unramified at
the prime $v$ we shall view $\Frob_v$ as element of $G_F$ via the embedding
$\overline
F\hookrightarrow \overline F_v$. If $A$ is a topological ring and
$\rho:G_F\longrightarrow
GL_N(A)$ is a continuous representation, we shall denote the restriction of $\rho$ to a
decomposition group at $v$ by $\rho_v.$ We shall frequently use $H^*(F,M)$ to denote
$H^*(G_F,M)$. The group of unramified cohomology classes at a prime is indicated by the
presence of a subscript (as in
$H^*_{\textrm{nr}}$).

If $k$ is a finite field then the Witt ring of $k$ will be denoted by $W(k)$ and
$\widehat{x}\in W(k)$
denotes the  Teichm\"{u}ller lift of $x\in k.$ A CNL $W(k)$-algebra, or simply a CNL
algebra if the finite field $k$ is clear,  is shorthand for a complete, Noetherian, local algebra {\bf with} residue field $k.$  If
$\chi$ (resp. $\rho$) is a $W(k)$ valued character (resp. homomorphism) then we will use
the same letters for their extension to a CNL $W(k)$-algebra.
\end{notation}

\section{Preliminaries}
\label{prelims}

We now give a brief summary of deformation theory and discuss some of the key tools used  in studying global deformation conditions. Aside from setting out notation, we hope that the discussion in this section (following Theorem \ref{thmboeckle} in particular) will make transparent the basic argument and structure of this article.

\subsection{Deformation conditions in general} \label{gen DC}

We begin with a brief summary of what a deformation condition means since, for the most part, we
shall be involved in checking that the properties we specify at a local decomposition group give
a deformation condition. We shall follow \S23, \S26 of \cite{mazur1}, except for some minor
adjustments.

Let  $\Pi$ be a profinite group satisfying the
 ``finiteness at $\ell$'' property of Mazur (\S1 of \cite{mazur1}). For our purposes, a
 representation of
 $\Pi$ is a continuous homomorphism $\rho:\Pi\longrightarrow GL_N(A)$ where $A$ is a
 topological
 ring. The underlying free $A$-module on which $\Pi$ acts will be denoted by $V(\rho).$
 Given two representations
 \[
 \rho_A:\Pi\longrightarrow GL_N(A),\qquad \rho_B:\Pi\longrightarrow GL_N(B)
 \]
  and a morphism
 $f:A\longrightarrow B$ in the relevant category, we say that $\rho_A$ is
 a lift of $\rho_B$ if $f\rho_A=\rho_B$.

 If $\rho_1:\Pi\longrightarrow GL_n(A)$, $\rho_2:\Pi\longrightarrow GL_m(A)$ are two representations then
 $\Hom (V(\rho_1),V(\rho_2))$, or just simply $\Hom (\rho_1,\rho_2)$, is shorthand
 for the $A[\Pi]$-module of $A$-linear maps from $V(\rho_1)$ to $V(\rho_2)$. As a
 representation $\Hom (\rho_1,\rho_2)$ can be described as the group of $m\times n$ matrices over $A$  with $\Pi$ action given by
 $(g,M)\longrightarrow\rho_2(g)M\rho_1(g)^{-1}$.  We shall take $\rho_1\otimes\rho_2:G_F\longrightarrow
GL_{mn}(A)$ to mean the representation (gotten from $V(\rho_1)\otimes V(\rho_2)$) expressed with respect to the basis $v_1\otimes
w_1,\ldots, v_1\otimes w_{m},\ldots , v_{n}\otimes w_1,\ldots ,v_{n}\otimes w_{m}$ where
$v_1,\ldots, v_{n}$ and $w_1,\ldots , w_{m}$ are the bases for $\rho_1$ and $\rho_2$
respectively. Note that $\Hom(\rho_1,\rho_2)$ is naturally isomorphic to
$\rho_{1}^{*}\otimes\rho_2$ where $\rho_{1}^{*}$ is the dual representation for $\rho_1$.

Let $\rep_N(\Pi;k)$ denote the following category:
\begin{itemize} \item Objects are pairs $(A,\rho_A)$ where $A$ is a CNL $W(k)$-algebra
and
$\rho_A:\Pi\longrightarrow GL_N(A)$ is a  representation; \item A morphism from $(A,\rho_A)$ to
$(B,\rho_B)$ is a pair $(f,M)$ where $f:A\longrightarrow B$ is a morphism of local rings and
$M\in GL_N(B)$ satisfies $f\rho_A=M\rho_BM^{-1}.$
\end{itemize}
Given a representation
$\rb:\Pi\longrightarrow GL_N(k),$
a \emph{deformation condition} $\cD$ for $\rb$ is
 a full subcategory $\cD\subseteq\rep_N(\Pi;k)$  satisfying the following properties:
\begin{enumerate}

\item[(DC0)] $(k,\rb)\in \cD,$ and if $(A,\rho_A)\in \cD$ then
$\rb\sim\rho_A\mod{\m_A}$.

\item[(DC1)] If $(A,\rho_A)$ is an object in $\cD$
and $(f,M):(A,\rho_A)\longrightarrow (B,\rho_B)$ is a morphism, then $(B,\rho_B)$ is also in
$\cD$.

\item[(DC2)] Given a cartesian diagram
\[
\begin{CD}
A\times_CB @>\pi_B>> B\\
@V{\pi_A}VV @V{\beta}VV\\
A @>{\alpha}>> C
\end{CD}
\]
of Artinian CNL algebras $A,B,C$ with $\beta$ small, an object
$(A\times_CB,\rho)$ of $\rep_N(\Pi;k)$ is in $\cD$ if and only if $(A,\pi_A\rho),
(B,\pi_B\rho)$ are in $\cD.$

\end{enumerate}
We say that $\rho:\Pi\longrightarrow GL_N(A),$ or $(A,\rho),$ is of type $\cD$ if
$(A,\rho)$ is in $\cD.$ If $\chi:\Pi\longrightarrow W^\times$ is a character, we say that
$\cD$ has determinant $\chi$ if $\det\rho=\chi$ for any $(A,\rho)\in\cD.$
The deformation condition $\cD$ is said to be smooth if for any surjection
$f:A\longrightarrow B$ and an object $(B,\rho_B)$ of type $\cD,$ there is an
object
$(A,\rho_A)$ in $\cD$ such that $f\rho_A=\rho_B.$ It is sufficient to verify
the
smoothness condition for small extensions only. The tangent space of $\cD$ will
be
denoted by $T\cD,$ and will be viewed as a $k$-subspace of
$H^1(\Pi,\Ad\rb)$
(it is a subspace of $H^1(\Pi,\Ad^0\rb)$ if the determinant is fixed).

In practice, conditions (DC0), (DC1), and  the only if part of condition (DC2), will
almost
always be immediate. If $\cD$ is a deformation condition for
$\rb:\Pi\longrightarrow GL_N(k),$ the functor
\[
\cD(A):=\left\{\text{type $\cD$ liftings}\quad \rho:\Pi\longrightarrow GL_N(A)\quad
\text{of}\quad \rb\right\}/\text{strict equivalence}
\]
 is nearly representable. If
$\cD$ is smooth then the (uni)versal deformation ring is a power series ring.

Our objective is to produce (uni)versal deformation rings which are power series rings.
In view
of the following lemma, one can make use of extension of scalars to produce such (uni)
versal
deformation rings.

\begin{lemma}
\label{scalar extn}
Let $k_0\subset k_1$ be finite fields of characteristic $\ell,$ and let
$\rb_0:\Pi\longrightarrow GL_n(k_0)$ be a representation. Denote by
$\rb_1:\Pi\longrightarrow GL_n(k_1)$ the extension of scalars of $\rb_0$ to
$GL_n(k_1).$

Given a deformation condition $\cD_1\subseteq\rep_n(\Pi;k_1),$ let $\cD_0$ be the
full subcategory of $\rep_n(\Pi;k_0)$ consisting of those objects $(A,\rho)\in\rep_n(\Pi;k_0)$
such that $\left(A\otimes_{W(k_0)}W(k_1),\rho\otimes W(k_1)\right)\in\cD_1.$ Then:
\begin{enumerate}
\item $\cD_0$ is a deformation condition for $\rb_0,$ and
$\dim_{k_0}T\cD_0=\dim_{k_1}T\cD_1.$ \item Let $R_0,R_1$ be the (uni)versal
deformation rings of type $\cD_0,\cD_1.$ Then there is an isomorphism
$R_1\longrightarrow R_0\otimes_{W(k_0)}W(k_1).$ In particular, if $R_1$ is a power series ring
then so is $R_0.$\end{enumerate}
\end{lemma}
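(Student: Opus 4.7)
I would prove (1) first and use it for (2). Axioms (DC0) and (DC1) for $\cD_0$ are immediate from the definition and the functoriality of $-\otimes_{W(k_0)}W(k_1)$. For (DC2), the key input is that $W(k_1)$ is free (hence flat) over $W(k_0)$, so tensoring preserves cartesian diagrams and smallness:
\[
(A\times_C B)\otimes_{W(k_0)} W(k_1)\;\cong\; (A\otimes_{W(k_0)} W(k_1))\times_{C\otimes_{W(k_0)} W(k_1)} (B\otimes_{W(k_0)} W(k_1)),
\]
so (DC2) for $\cD_0$ translates directly from (DC2) for $\cD_1$.

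For the tangent space calculation, the identification $k_0[\epsilon]\otimes_{W(k_0)} W(k_1)=k_1[\epsilon]$ gives $T\cD_0=T\cD_1\cap H^1(\Pi,\Ad\rb_0)$ inside $H^1(\Pi,\Ad\rb_1)=H^1(\Pi,\Ad\rb_0)\otimes_{k_0} k_1$. The Galois group $G:=\Gal(k_1/k_0)=\Gal(W(k_1)/W(k_0))$ acts semilinearly on $H^1(\Pi,\Ad\rb_1)$ with fixed subspace $H^1(\Pi,\Ad\rb_0)$, and $T\cD_1$ is $G$-stable because $\cD_1$ is intrinsically defined and hence invariant under the $G$-action on $\rep_n(\Pi;k_1)$ by twisting the coefficient ring. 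Galois descent then yields $T\cD_1=T\cD_0\otimes_{k_0} k_1$, and the dimension equality follows.

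For part (2), base-changing the universal $(R_0,\rho_0)$ produces a type $\cD_1$ object $(R_0\otimes_{W(k_0)} W(k_1),\rho_0\otimes W(k_1))$, and universality of $R_1$ furnishes a $W(k_1)$-algebra map $\phi:R_1\to R_0\otimes_{W(k_0)} W(k_1)$. On cotangent spaces $\phi$ is dual to the isomorphism $T\cD_0\otimes k_1\to T\cD_1$ from part (1), so $\phi$ induces an isomorphism on cotangents and is thus surjective by Nakayama. For injectivity, I would transport the $G$-action onto $R_1$ by universality applied to its Galois twists $\sigma^*(R_1,\rho_1)\in\cD_1$, producing a $\sigma$-semilinear endomorphism for each $\sigma\in G$; then Galois-descend to $R_1=R_1^G\otimes_{W(k_0)} W(k_1)$, with $R_1^G$ a CNL $W(k_0)$-algebra of residue field $k_0$ carrying a type $\cD_0$ deformation by construction. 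Universality of $R_0$ yields a $W(k_0)$-algebra map $R_0\to R_1^G$ whose base change inverts $\phi$.

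The main obstacle is the Galois-descent step: both the dimension equality in (1) and the construction of an inverse to $\phi$ in (2) rest on $\cD_1$ being stable under the $G$-action on $\rep_n(\Pi;k_1)$. This is not formally part of the hypothesis but holds automatically for any deformation condition intrinsically defined rather than depending on a choice of basis for $V(\rb_1)$, and in particular for all conditions used in this paper. Once $G$-stability is in hand, the rest of the argument is formal.
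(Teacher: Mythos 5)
Your proposal is correct in substance and, for part (1), takes essentially the same route as the paper, only more explicitly: the paper's one-line step ``there is a subspace $L\subseteq H^1(\Pi,\Ad\rb_0)$ such that $L\otimes k_1=T\cD_1$'' is precisely the Galois-descent statement you isolate, and the paper offers no more justification for it than you do. You are right that this is the crux: from the definitions one only gets $T\cD_0=T\cD_1\cap H^1(\Pi,\Ad\rb_0)$, whence $\dim_{k_0}T\cD_0\leq\dim_{k_1}T\cD_1$ with equality exactly when $T\cD_1$ is defined over $k_0$; this is not a formal consequence of (DC0)--(DC2), so the lemma is really being invoked under the tacit hypothesis that $\cD_1$ is stable under the $\Gal(k_1/k_0)$-twisting of coefficients --- true for every condition built in this paper, since they all arise by extension of scalars from data over $k_0$. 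Your flatness argument for (DC2) is the standard verification the paper leaves to the reader.

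For part (2) you genuinely do more than the paper. The paper produces the map $R_1\to R_0\otimes_{W(k_0)}W(k_1)$ and concludes it is an isomorphism from the equality of tangent-space dimensions; as literally stated this only gives surjectivity (via Nakayama, exactly as you argue), since a surjection of CNL rings with equal reduced cotangent dimensions need not be injective. Your construction of an inverse by descending $R_1$ along the Galois action fills that gap. The one point to watch is that descending the (uni)versal deformation $\rho_1$ (not just the ring) to $R_1^G$ requires the semilinear automorphisms furnished by (uni)versality to satisfy the cocycle condition on the nose rather than merely up to strict equivalence; this is harmless in the paper's applications, where $\rb_1$ is absolutely irreducible and the global functor is representable, but deserves a sentence in the merely versal setting. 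Granting the Galois-stability of $\cD_1$, your argument is complete and strictly more detailed than the paper's.
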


\begin{proof} Checking that $\cD_0$ is a deformation condition is straightforward.
Extension of scalars give a natural isomorphism between $H^1(\Pi,\Ad\rb_0)\otimes k_1$
and $H^1(\Pi,\Ad\rb_1).$ Thus there is a subspace $L\subseteq
H^1(\Pi,\Ad\rb_0)$ such that $L\otimes k_1=T\cD_1.$ One then checks that $L$
has to be the tangent space for $\cD_0.$

For the second part, there is a surjection $R_1\longrightarrow R_0\otimes W(k_1).$ Since the
extension $W(k_1)/W(k_0)$ is smooth, the tangent space for $R_0\otimes W(k_1)$ has the same
dimension as the tangent space for $R_0.$ Hence the surjection is an isomorphism. \end{proof}

\subsection{Global deformations}
\label{prelim global}

Now let $F$ be  a number field and  let $k$ be a finite field of characteristic
$\ell$. Fix an absolutely irreducible representation
$\rb:G_F\longrightarrow GL_N(k)$ and a character
 $\chi:G_F\longrightarrow W^\times$  such that
$\chi\pmod{\ell}=\det\rb$.

Informally, a global deformation condition
specifies that we consider liftings of $\rb:G_F\longrightarrow GL_N(k)$ with
prescribed local behaviour.
More precisely: Suppose we are given, for each prime $v$ of $F$,  a
deformation condition $\cD_v$ for $\rb\vert_v$ with determinant $\chi$. Furthermore,
we require that the deformation condition $\cD_v$
is unramified for  almost all primes $v$.  The global deformation condition $\{\cD_v\}$
with determinant $\chi$ for $\rb$ is  then the full subcategory of $\rep_N(G_F;k)$
consisting
of those objects $(A,\rho)\in\rep_N(G_F;k)$ such that $\det\rho=\chi$ and
$(A,\rho|_v)\in\cD_v$ for all $v$.

For a global deformation condition $\cD$ with determinant $\chi$ for $\rb$, we
shall denote the local condition at $v$ by $\cD_v$ (so $\cD=\{\cD_v\})$.
We define the ramification set $\Sigma(\cD)$ to be the finite set consisting of those
primes $v$ of $F$ where $\cD_v$ is not unramified, primes  lying above $\ell$ and
$\infty$, and primes where $\rb$ and $\chi$ are ramified. Thus $\cD$ is
precisely a deformation condition for $\rb|_{\textrm{Gal}(F_{\Sigma(\cD)}/F)}$
with prescribed local components (cf. \S26 of \cite{mazur1}). The tangent space for
$\cD$
is the Selmer group
\[
H^1_{\{T\cD_v\}}\left(F,\Ad^0\rb\right)=
\textrm{ker}\left(H^1(G_F,\Ad^0\rb)\longrightarrow\prod
H^1(F_v,\Ad^0\rb)/T\cD_v\right).
\]
The dual Selmer group for $\cD$ is defined as follows. For each prime $v$ of $F$ the  pairing
$\Ad^0\rb\times \Ad^0(1)\rb\longrightarrow k(1)$ obtained by taking trace induces a perfect pairing
\[
H^1\left(F_v,\Ad^0\rb\right)\times H^1\left(F_v,\Ad^0\rb(1)\right)
\longrightarrow
H^2(F_v,k(1)).
\]
Let $T\cD_v^\perp\subseteq
H^1\left(F_v,\Ad^0\rb(1)\right)$ be the annihilator of
$T\cD_v$ under
the above pairing. The dual Selmer group
$H^1_{\{T\cD_v^\perp\}}\left(F,\Ad^0\rb(1)\right)$ is then determined by the local conditions
$\{T\cD_v^\perp\}$ i.e.
\[
H^1_{\{T\cD_v^\perp\}}\left(F,\Ad^0\rb(1)\right):=
\textrm{ker}\left(H^1(G_F,\Ad^0\rb(1))\longrightarrow\prod
H^1(F_v,\Ad^0\rb(1))/T\cD_v^\perp\right).
\]
While the tangent space for $\cD$ is a very difficult object to get a handle on, remarkably a quantitative comparision with the dual Selmer group is possible by the following formula of Wiles (Theorem 8.6.20 in \cite{NSW}):
\begin{multline}\label{wiles formula}
\dim H^1_{\{T\cD_v\}}\left(F,\Ad^0\rb\right)-\dim H^1_{\{T\cD_v^\perp\}}\left(F,\Ad^0\rb(1)\right) \\
= \sum_{\text{primes}\ v\leq \infty}\left(\dim T\cD_v-\dim H^0(F_v, \Ad^0\rb)\right).
\end{multline}

We now describe a beautiful result of B\"{o}ckle which allows one to relate the global (uni)versal
deformation ring in terms of local deformation rings. Let  $\overline\rho$, $\chi$ and $\cD$  be as above. For each prime $v$, set
$n_v:=\dim T\cD_v$.
The (uni)versal deformation ring $R_v$ for type $\cD_v$ deformations of
$\overline\rho|_{G_{F_v}}$ then has a presentation
$R_v\cong
W(k)[[T_{v,1},\ldots , T_{v,n_v}]]/J_v$.
Note that $J_v=(0)$ if $v\not\in\Sigma(\cD)$.

 Fix  a presentation
\[
W(k)[[T_1,\ldots ,T_n]]/J ,\qquad  n:=H^1_{\{T\cD_v\}}(F,\Ad^0\overline\rho),
\]
for the (uni)versal global deformation ring $R$ for type $\cD$ deformations of
$\rb$. Restriction of the (uni)versal deformation to a decomposition group at $v$ induces a map  $R_v\longrightarrow R$ which can be then lifted to a map
$\alpha_v:W(k)[[T_{v,i}]]\longrightarrow W(k)[[T_i]]$ of local rings. (Of course
$\alpha_v$, and even $R_v\longrightarrow R$, might not be unique at all.)

 \begin{theorem} (B\"{o}ckle, Theorem 4.2 of \cite{boeckle2})
  With notation as in the preceding paragraphs, the ideal $J$ is generated
  by the images $\alpha_vJ_v$ together with at most
  $\dim  H^1_{\{T\cD_v^\perp\}}(F,\Ad^0\overline\rho(1))$ other elements. Thus
\begin{align}\label{loc global inequality}
 \gen(J)\leq \sum_{v\in \Sigma(\cD)}\gen(J_v) +
  \dim H^1_{\{T\cD_v^\perp\}}(F,\Ad^0\overline\rho(1))
\end{align}
where $\gen(J)$ (resp. $\gen(J_v)$) is the minimal number of elements  required to generate  the ideal $J$ (resp. $J_v$).
  \label{thmboeckle}
\end{theorem}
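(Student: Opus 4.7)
The plan is to bound $\gen(J)$ via a standard obstruction-theoretic argument, using Poitou--Tate duality to identify the ``global minus local'' contribution with the dual Selmer group. Write $P := W(k)[[T_1,\ldots,T_n]]$ for the ambient power series ring with maximal ideal $\mathfrak{M}$, and let $K := \sum_v \alpha_v(J_v) \cdot P \subseteq J$. Since the presentation $R = P/J$ is minimal (we chose $n = \dim_k H^1_{\{T\cD_v\}}(F,\Ad^0\rb)$), we have $J \subseteq \mathfrak{M}^2$ and $\gen(J) = \dim_k J/\mathfrak{M}J$ by Nakayama, so the claim is equivalent to showing $\dim_k J/(\mathfrak{M}J + K) \leq d$, where $d := \dim H^1_{\{T\cD_v^\perp\}}(F,\Ad^0\rb(1))$.

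Pick $f_1,\ldots,f_m \in J$ whose classes span $J/(\mathfrak{M}J + K)$; the goal is $m \leq d$. I would manufacture from each $f_i$ an obstruction class $\xi_i \in H^2(G_F,\Ad^0\rb)$ as follows. After truncating by a sufficiently high power of $\mathfrak{M}$ to land in the Artinian world, form
\[
A_i := P/\bigl(\mathfrak{M}^N + \mathfrak{M}J + K + (f_j)_{j\neq i}\bigr),\qquad B := A_i/(\bar f_i),
\]
so that $A_i \twoheadrightarrow B$ is a small extension with one-dimensional kernel $k \cdot \bar f_i$. The universal type-$\cD$ deformation descends to a lift of $\rb$ over $B$ via the natural map $R \twoheadrightarrow B$, and the obstruction to lifting it further to $A_i$ is the desired class $\xi_i$. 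Nontriviality and linear independence of the $\xi_i$ follow from the choice of the $f_i$ together with minimality of the presentation, via the standard $H^2$-dictionary between generators of the relation ideal and obstruction classes.

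The crucial local observation is that $K$ vanishes in $A_i$, so the restriction of the lift to each decomposition group $G_{F_v}$ factors through the local universal ring $R_v = W(k)[[T_{v,j}]]/J_v$. Hence $\operatorname{res}_v \xi_i \in H^2(F_v,\Ad^0\rb)$ arises entirely from local obstructions generated by $J_v$ (whose images lie in $K$), and under Tate local duality is orthogonal to the complement of $T\cD_v^\perp$ in $H^1(F_v,\Ad^0\rb(1))$. The Poitou--Tate nine-term exact sequence then identifies the space of such ``locally admissible'' classes in $H^2(G_F,\Ad^0\rb)$ with the Tate dual of $H^1_{\{T\cD_v^\perp\}}(F,\Ad^0\rb(1))$, which has dimension $d$; therefore $m \leq d$. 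The main obstacle is the bookkeeping in the middle paragraph: verifying rigorously that independent $f_i$ produce independent obstruction classes landing in the correct Selmer-type subspace of $H^2$, and that the local constraints genuinely match the annihilator dictated by $T\cD_v^\perp$. Once this matching is in place, the bound is a formal consequence of Tate duality.
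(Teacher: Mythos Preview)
The paper does not supply its own proof of this statement: it is quoted from B\"ockle \cite{boeckle2} and used as a black box, so there is no in-paper argument to compare against. Your outline is the standard obstruction-theoretic route and is essentially B\"ockle's own strategy.

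One correction to your sketch: since $K$ vanishes in $A_i$, the induced map $R_v\to A_i$ furnishes an honest lift of $\rho_B|_{G_{F_v}}$ to $A_i$, so in fact $\operatorname{res}_v\xi_i=0$ in $H^2(F_v,\Ad^0\rb)$ outright. Your phrasing about being ``orthogonal to the complement of $T\cD_v^\perp$ in $H^1(F_v,\Ad^0\rb(1))$'' does not parse, because under local Tate duality $H^2$ pairs with $H^0$, not with $H^1$. The correct conclusion is that the $\xi_i$ lie in $\Sha^2_{\Sigma}(\Ad^0\rb)$, which by global duality is isomorphic to $\Sha^1_{\Sigma}(\Ad^0\rb(1))^\vee$; since $\Sha^1_{\Sigma}(\Ad^0\rb(1))\subseteq H^1_{\{T\cD_v^\perp\}}(F,\Ad^0\rb(1))$, the bound $m\leq d$ follows.

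The linear-independence step you flag is indeed where the care lies: if some nontrivial combination of the $\xi_i$ vanished, you would obtain a lift of $\rho_B$ over the corresponding small extension, but to contradict universality of $R$ you must know this lift can be taken of type $\cD$. That again uses the existence of local type-$\cD_v$ lifts over $A_i$ together with a global $H^1$ adjustment, and is the substance of the ``bookkeeping'' you rightly identify as the main obstacle.
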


To prove our main theorem, we make sure that our global deformation condition has
smooth local conditions and trivial dual Selmer group. If we can do that, then
\eqref{loc global inequality} ensures that the global deformation ring has trivial
ideal of relations and so is smooth. The question now is how to get to such nice global
deformations.

The first step is to construct a global deformation problem $\cD$ with smooth local
deformation conditions. By \eqref{loc global inequality} the number of global relations
is then bounded by the dimension of the dual Selmer group. The next, and critical step,
is to tweak one of the local conditions $\cD_v$ at some prime so that the new
deformation condition has smaller dual Selmer group. We shall show that this can be
done provided
\begin{align}\label{ineq 1}
\dim H^1_{\{T\cD_v\}}\left(F,\Ad^0\rb\right)\geq N-2 +
\dim H^1_{\{T\cD_v^\perp\}}\left(F,\Ad^0\rb(1)\right).
\end{align}
We shall prove the necessary results from Galois cohomology in Section \ref{Gal coh}.

Note that by  Wiles formula \eqref{wiles formula}, the above inequality will fail if
the local deformation conditions are `small'. To ensure this doesn't happen, we make
sure that $\cD_v$ is smooth in $\dim H^0(F_v,\Ad^0\rb)$ variables at primes not
dividing $\ell$. The required constructions are carried out in Section
\ref{local analysis}; the precise statement we need is presented in Theorem~\ref{main local}. Given these local conditions,  the hypotheses at $\ell$ and
 $\infty$ allows us to ensure that \eqref{ineq 1} is satisfied.

\section{Galois cohomology}\label{Gal coh}

\emph{Throughout this section,  $K/F$ is a finite Galois extension of number fields with
Galois group
$G:=\textrm{Gal}(K/F)$ and  $k$ be a finite extension of $\F_\ell$.}

If $M$ is   a  $k[G]$-module and $\xi\in H^1(G_F,~M)$ then the restriction
 of $\xi$ to $G_K$ is a group homomorphism. We denote by $K(\xi)$ the field through which this
homomorphism factorises. Note that the extension $K(\xi)/F$ is Galois. For
$\xi_i\in H^1(G_F,~M)$,
$i=1,\ldots , n$, the compositum of $K(\xi_1),\ldots , K(\xi_n)$ will be denoted by
$K(\xi_1,\ldots ,\xi_n)$.

\subsection{} In this subsection $M$ is a  finite $k[G]$-module  satisfying the following
two conditions:
\begin{itemize}
\item $M$ is a simple $\F_\ell[G_F]$-module with $\textrm{End}_{\F_\ell[G_F]}(M)=k$; \item $H^1(G,~M)=0$.
\end{itemize}

\begin{lemma} \label{coh-lemma}
Let $0\neq\xi\in H^1(G_F,~M).$  Then:
\begin{enumerate}
\item[(a)] The restriction $\xi : \textrm{Gal}\left(K(\xi)/K\right)\longrightarrow M$ is an
isomorphism of $G$-modules. \item[(b)] If $L$ is a Galois extension of $F$ with $K\subseteq M,$
then  either $K(\xi)\subseteq L$ or $K(\xi)\cap L=K.$
\end{enumerate}\end{lemma}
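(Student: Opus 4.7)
The plan is to combine inflation--restriction with the simplicity of $M$. Since $M$ is a $k[G]$-module, the $G_F$-action on $M$ factors through $G=\Gal(K/F)$, so $G_K$ acts trivially on $M$ and $M^{G_K}=M$. The low-degree inflation--restriction sequence then reads
$$0\longrightarrow H^1(G,M)\longrightarrow H^1(G_F,M)\longrightarrow \Hom(G_K,M)^{G},$$
and the hypothesis $H^1(G,M)=0$ forces the restriction map to be injective. Thus $\xi|_{G_K}\colon G_K\longrightarrow M$ is a nonzero $G$-equivariant homomorphism, where $G$ acts on $G_K$ by conjugation via chosen lifts. Since $\ker(\xi|_{G_K})$ is stable under the $G_F$-conjugation action (because $\xi|_{G_K}$ is $G_F$-equivariant), it is normal in $G_F$, so $K(\xi)/F$ is Galois.

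For (a) I would argue that by the very definition of $K(\xi)$ as the fixed field of $\ker(\xi|_{G_K})$, the induced map $\xi\colon\Gal(K(\xi)/K)\longrightarrow M$ is injective and $G$-equivariant. Its image is a nonzero $G$-submodule of $M$. But any $k[G]$-submodule of $M$ is in particular an $\F_\ell[G_F]$-submodule, and $M$ is simple as such; hence the image must be all of $M$, and $\xi$ is a $G$-isomorphism.

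For (b), set $E:=K(\xi)\cap L$. Since $L$ and $K(\xi)$ are both Galois over $F$, so is $E$, and $\Gal(K(\xi)/E)$ is a normal subgroup of $\Gal(K(\xi)/F)$, hence stable under the $G$-action on $\Gal(K(\xi)/K)$ by conjugation. Transported through the isomorphism of part (a), it becomes an $\F_\ell[G]$-submodule of $M$. Simplicity of $M$ leaves exactly two cases: the subgroup is trivial, whence $E=K(\xi)$ and $K(\xi)\subseteq L$; or it equals $\Gal(K(\xi)/K)$, whence $E=K$.

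Everything here reduces to bookkeeping with inflation--restriction and Galois theory, so there is no genuine obstacle; the only point requiring care is verifying that the $G$-action on $\Gal(K(\xi)/K)$ by conjugation matches the $G$-action on $M$ under the isomorphism of (a), which is a direct consequence of the $G_F$-equivariance of $\xi|_{G_K}$.
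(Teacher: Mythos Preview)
Your proof is correct and follows essentially the same approach as the paper: both arguments reduce (a) and (b) to the observation that the relevant subgroups of $\Gal(K(\xi)/K)$ map under $\xi$ to $G$-stable $\F_\ell$-subspaces of $M$, which by simplicity must be $0$ or $M$. Your version simply spells out the inflation--restriction step and the $G$-equivariance more explicitly than the paper's two-line proof does.
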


\begin{proof} The images of $\textrm{Gal}(K(\xi)/K)$ and
$\textrm{Gal}(K(\xi)/(K(\xi)\cap L)$ under $\xi$ are subspaces of $M$ stable under the action of
$G.$ The lemma follows as $M$ is simple.\end{proof}

\begin{proposition}\label{disjoint}
 If $\psi_1,\psi_2 ,\ldots ,\psi_n$ are $n$ linearly
independent classes in the $k$-vector space $H^1(G_F,~M),$ then $K(\psi_1), K(\psi_2),\ldots ,
K(\psi_n)$ are linearly disjoint over $K.$
\end{proposition}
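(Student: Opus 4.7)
The plan is to proceed by induction on $n$, with the base case $n=1$ being trivial. For the inductive step, I will assume that the fields $K(\psi_1),\ldots ,K(\psi_{n-1})$ are already linearly disjoint over $K$ and let $L:=K(\psi_1,\ldots ,\psi_{n-1})$ be their compositum. Since each $K(\psi_i)/F$ is Galois, so is $L/F$, and $K\subseteq L$, so Lemma~\ref{coh-lemma}(b) applies to $\psi_n$ and $L$: either $K(\psi_n)\subseteq L$ or $K(\psi_n)\cap L=K$. The second alternative gives exactly what we need to conclude the induction, so the task is to rule out $K(\psi_n)\subseteq L$.

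Suppose for contradiction that $K(\psi_n)\subseteq L$. By the inductive linear disjointness together with Lemma~\ref{coh-lemma}(a), we get a $G$-equivariant identification
\[
\mathrm{Gal}(L/K)\;\cong\;\bigoplus_{i=1}^{n-1}\mathrm{Gal}(K(\psi_i)/K)\;\cong\;M^{n-1},
\]
where the $i$-th factor is realised by the restriction $\psi_i|_{G_K}$ (which factors through $\mathrm{Gal}(L/K)$ because $K(\psi_i)\subseteq L$). Since $K(\psi_n)\subseteq L$ by assumption, $\psi_n|_{G_K}$ likewise factors through $\mathrm{Gal}(L/K)$ and defines a $G$-equivariant homomorphism $M^{n-1}\to M$. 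As $G_F$ acts on $M$ through $G$, we have $\mathrm{End}_{\F_\ell[G]}(M)=k$, and $M$ is simple as a $k[G]$-module; a Schur-type argument then gives scalars $c_1,\ldots,c_{n-1}\in k$ with
\[
\psi_n|_{G_K}\;=\;\sum_{i=1}^{n-1}c_i\,\psi_i|_{G_K}.
\]

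Setting $\xi:=\psi_n-\sum c_i\psi_i\in H^1(G_F,M)$, we then have $\mathrm{res}_{G_K}(\xi)=0$, so inflation-restriction places $\xi$ in the image of $H^1(G,M)$, which vanishes by hypothesis. Hence $\xi=0$, contradicting the linear independence of $\psi_1,\ldots,\psi_n$. This contradiction forces $K(\psi_n)\cap L=K$ and completes the induction.

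The main thing to get right is the $G$-equivariance of the identification $\mathrm{Gal}(L/K)\cong M^{n-1}$ and the application of Schur's lemma in the form $\mathrm{End}_{\F_\ell[G]}(M)=k$; once those are in hand, the inflation-restriction finish is routine. The only subtle point is observing that the hypothesis $\mathrm{End}_{\F_\ell[G_F]}(M)=k$ transfers to $\mathrm{End}_{\F_\ell[G]}(M)=k$ because the $G_F$-action on $M$ factors through $G$.
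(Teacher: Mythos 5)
Your proof is correct, and it follows the same skeleton as the paper's (induction on $n$, Lemma~\ref{coh-lemma} to reduce to ruling out $K(\psi_n)\subseteq L$, and the $G$-equivariant identification $\Gal(L/K)\cong M^{n-1}$ coming from the inductive hypothesis), but the way you close the argument is genuinely different. The paper rules out $K(\psi_n)\subseteq L$ by a counting argument: it shows the map $\psi\mapsto K(\psi)$ from $\mathbb{P}(V)$ to the set $\mathcal{E}$ of intermediate extensions $E$ with $\Gal(E/K)\cong M$ is a bijection (injectivity from the $n=2$ case, surjectivity by comparing $|\mathbb{P}(V)|$ with $|\Hom_G(M^{n-1},M)|=|k^{n-1}|$ suitably projectivised), and then reduces to the already-established $n=2$ case. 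You instead apply Schur's lemma directly to the $G$-equivariant map $M^{n-1}\to M$ induced by $\psi_n|_{G_K}$ to get $\psi_n|_{G_K}=\sum c_i\psi_i|_{G_K}$, and finish with inflation--restriction and $H^1(G,M)=0$. This is in effect the paper's $n=2$ argument run directly at level $n$; it is shorter, avoids the enumeration of intermediate fields, and makes explicit the inflation--restriction step that the paper leaves implicit in its base case. Both routes rest on exactly the same two hypotheses, namely $\textrm{End}_{\F_\ell[G_F]}(M)=k$ and $H^1(G,M)=0$, and your observation that the endomorphism hypothesis transfers from $G_F$ to $G$ because the action factors through $G$ is the correct justification for invoking Schur at the level of $G$-modules.
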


\begin{proof} We first do the case $n=2.$ If $K(\psi_1)$ and
$K(\psi_2)$ are not linearly disjoint over $K,$ then by the above lemma $K(\psi_1)=K(\psi_2).$
The composite
\[
M\xrightarrow{\psi_1^{-1}}
\textrm{Gal}(K(\psi_1)/K)=\textrm{Gal}(K(\psi_2)/K)\xrightarrow{\psi_2}M
\]
is a
$G$-module automorphism of $M$. Since $k$ is the endomorphism ring of $M$,  $\psi_1$ and $\psi_2$
are linearly dependent---a contradiction.

Suppose now that the proposition holds for $n-1.$ Let $K(\psi_1,\ldots , \psi_{n-1})$ be the
compositum of $K(\psi_1),\ldots , K(\psi_n).$ By the inductive hypothesis, we have
identifications
\[
\begin{matrix}
\textrm{Gal}\left(K(\psi_1,\ldots , \psi_{n-1})/K\right)&\longrightarrow&
\textrm{Gal}\left(K(\psi_1)/K\right)\times\cdots\times
\textrm{Gal}\left(K(\psi_{n-1})/K\right)\\
&& \downarrow \\
&&M\times\cdots\times M\end{matrix}
\]
 of $G$-modules.

Let $V$ be the $k$-subspace of $H^1(G_F,~M)$ spanned by $\psi_1,\ldots , \psi_{n-1}$ and let
$\mathcal{E}$  be the set Galois extensions $E/F$ with $K\subseteq E\subseteq K(\psi_1,\ldots ,
\psi_{n-1})$ and $\textrm{Gal}(E/K)$ isomorphic to $M$ as $G$ modules. We claim that the map
$\mathbb{P}(V)\longrightarrow \mathcal{E}$ given by $\psi\longrightarrow K(\psi)$ is a bijection.
That the map is an injection   follows from the case $n=2$ of the proposition. Now elements of
$\mathcal{E}$ correspond to non-trivial $G$ module homomorphisms from $M\times\cdots \times M$ to
$M.$ Now
\begin{align*}
\Hom_G\left(M\times\cdots\times M,~M\right) &\cong
\Hom_G\left(M,~M\right)\times\cdots\times
\Hom_G\left(M,~M\right)\\
&\cong k\times\cdots\times k.\end{align*} It follows that $|\mathbb{P}(V)|=|\mathcal{K}|,$
and this establishes the claim.

We now show that $K(\psi_n)$ and $K(\psi_1,\ldots , \psi_{n-1})$ are linearly disjoint over $K.$
If not, then Lemma~\ref{coh-lemma} implies that $K(\psi_n)\in \mathcal{E},$ and so by the claim
$K(\psi_n)=K(a_1\psi_1+\cdots +a_{n-1}\psi_{n-1})$ for some $a_1,\ldots ,a_n\in k.$  Appealing to
the case $n=2$ of the proposition, we see that $\psi_n$ is a linear combination of
$\psi_1,\ldots, \psi_{n-1}$---which is a contradiction.
\end{proof}

\subsection{} Now let $M$ be an absolutely irreducible $k[G]$-module with
$H^1(G,~M)=0,$ and let $g\in G$ be a fixed element of $G$ which acts semi-simply on $M.$ We
denote by $M^g$ the kernel of multiplication by $g-1$ on $M.$ Note that we have a decomposition
$M=M^{g}\oplus (g-1)M.$

 Fix also a
non-trivial subgroup $L\subseteq M$ invariant under $G_F$ with minimal dimension as an
$\F_\ell$-vector space. It is then straightforward to check that $L$ is simple, that $k$ contains
$ \textrm{End}_{\F_\ell[G_F]}(L)=:k'$ (say), and that $M\cong L\otimes_{k'}k.$ Further, we have
$M^{g}=L^{g}\otimes_{k'}k$ and $(g-1)M=(g-1)L\otimes_{k'}k.$

\begin{proposition} \label{injective1}
With assumptions and notations as in the previous two paragraphs,
let $V$ be a finite dimensional $k$-subspace of  $H^1(G_F,~M).$ If
$\dim M^g\geq
\dim V$ we can find a lift $\tilde{g}\in G_F$ of $g$ such that the restriction map
\[
V\hookrightarrow H^1(G_F,~M)\longrightarrow H^1(\langle \widetilde{g}\rangle,~M)
\]
 is
injective.
\end{proposition}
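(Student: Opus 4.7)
The strategy is to reduce, via $M = L \otimes_{k'} k$, to Proposition~\ref{disjoint} applied to $L$, and then conclude with a linear-algebra step. First, fix any lift $g_0 \in G_F$ of $g$; every lift is $\tilde g = g_0 h$ with $h \in G_K$. Writing $\pi \colon M \to M^g$ for the projection along $(g-1)M$, the cocycle identity (together with $g$ acting trivially on $M^g$) yields $\pi(\psi(g_0 h)) = \pi(\psi(g_0)) + \pi(\psi(h))$. Since $H^1(\langle\tilde g\rangle, M) = M/(g-1)M$, injectivity of $V \hookrightarrow H^1(\langle\tilde g\rangle, M)$ is equivalent to: for a $k$-basis $\psi_1, \dots, \psi_n$ of $V$, the tuple $(\pi\psi_i(g_0 h))_i \in (M^g)^n$ is $k$-linearly independent.

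Using flatness and $H^1(G_F, M) = H^1(G_F, L) \otimes_{k'} k$, choose a $k'$-subspace $V_0 \subseteq H^1(G_F, L)$ minimal with $V \subseteq V_0 \otimes_{k'} k$, fix a $k'$-basis $\phi_1, \dots, \phi_m$ of $V_0$, and write $\psi_i = \sum_r b_{i,r} \phi_r$ with $b_{i,r} \in k$. The matrix $B = (b_{i,r}) \in M_{n \times m}(k)$ has $k$-rank $n$ (by the $k$-linear independence of the $\psi_i$'s), and minimality of $V_0$ forces its columns $B_r := (b_{i,r})_i \in k^n$ to be $k'$-linearly independent. Set $U := \mathrm{span}_{k'}(B_r) \subseteq k^n$. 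By Proposition~\ref{disjoint} applied to $L$ and the $\phi_r$'s, the map $G_K \to L^m$, $h \mapsto (\phi_r(h))_r$, is surjective. A direct computation shows $\pi\psi_i(h) = \sum_r \pi_L(\phi_r(h)) \otimes b_{i,r}$, whence the image of $h \mapsto (\pi\psi_i(h))_i$ in $(M^g)^n = L^g \otimes_{k'} k^n$ equals $I := L^g \otimes_{k'} U$. The same formula places $\tau_{g_0} := (\pi\psi_i(g_0))_i$ inside $I$, so the coset $\tau_{g_0} + I = I$, and the problem reduces to finding a $k$-linearly independent tuple inside $L^g \otimes_{k'} U$.

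The main obstacle is this final, purely linear-algebraic step. Such a tuple corresponds to an injective $k$-linear map $\sigma \colon k^n \to M^g$ of the form $(f \otimes_{k'} k) \circ B^T$ for some $k'$-linear $f \colon k'^m \to L^g$. Writing $W := B^T(k^n) \subseteq k^m$ (a $k$-subspace of $k$-dim $n$, since $B$ has $k$-rank $n$), we need $\ker(f) \otimes_{k'} k$ disjoint from $W$. This will follow from a small complement lemma: \emph{for any $k$-subspace $W \subseteq k^m$ of $k$-dim $n$, there exists a $k'$-subspace $K' \subseteq k'^m$ of $k'$-dim $m-n$ with $K' \otimes_{k'} k \oplus W = k^m$.} The proof: the composite $k'^m \hookrightarrow k^m \twoheadrightarrow k^m/W$ has $k$-span $k^m/W$, so its image contains $m-n$ elements forming a $k$-basis of $k^m/W$; lifting these gives the required $K'$. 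Using the hypothesis $\dim_k M^g = \dim_{k'} L^g \geq n$, choose $f \colon k'^m \twoheadrightarrow k'^m/K' \hookrightarrow L^g$; then $\sigma$ is injective. Finally, using surjectivity of $G_K \to L^m$, realize the values $f(e_r) \in L^g$ as $\phi_r(h)$ for some $h \in G_K$, whence $\tilde g := g_0 h$ is the required lift.
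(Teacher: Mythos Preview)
Your argument is correct apart from a small slip in the last sentence, and it takes a genuinely different (more abstract) route than the paper's.

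\textbf{Comparison.} Both proofs reduce via $M=L\otimes_{k'}k$ and apply Proposition~\ref{disjoint} to $L$. The paper then performs a ``row reduction'' on the coefficient matrix, choosing the basis so that $\xi_i=\psi_i+\sum_{j>n}a_{ij}\psi_j$; this gives each $\xi_i$ a private coordinate $\psi_i$, and one picks $x_1,\dots,x_n$ \emph{inductively}, using at each step that the coset $\xi_{j+1}(g')+L$ cannot lie in any proper $k$-subspace of $M$ (since $L$ $k$-spans $M$). You instead keep a general coefficient matrix $B$, identify the entire achievable set $\{(\pi\psi_i(\tilde g))_i:\tilde g\text{ lifts }g\}$ as $I=L^g\otimes_{k'}U$, and then solve a single linear-algebra problem via your complement lemma. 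Your route is more conceptual and avoids the inductive construction; the paper's is more hands-on and yields an explicit recipe for $\tilde g$.

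\textbf{The slip.} In the final sentence you choose $h$ with $\phi_r(h)=f(e_r)$, which makes $(\pi\psi_i(h))_i$ equal to your chosen $k$-independent tuple; but then
\[
(\pi\psi_i(g_0h))_i=\tau_{g_0}+(\text{that tuple}),
\]
which need not be $k$-independent. The fix is immediate from what you have already established: since the map $h\mapsto(\pi\psi_i(h))_i$ has image $I$ and $\tau_{g_0}\in I$, simply choose $h$ so that $(\pi\psi_i(h))_i$ equals $(\text{target})-\tau_{g_0}\in I$. Concretely, write $(\text{target})-\tau_{g_0}=\sum_r\ell_r\otimes B_r$ with $\ell_r\in L^g$ (possible since both lie in $I$ and the $B_r$ are $k'$-independent), and use surjectivity of $G_K\to L^m$ to find $h$ with $\pi_L(\phi_r(h))=\ell_r$.
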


\begin{proof} Set $n:=\dim V$. Since $H^1(G_F,~M)\cong H^1(G_F,~L)\otimes_{k'}k$,
we can find:
\begin{itemize}
\item a basis $\xi_1,\ldots, \xi_n$ of $V$,
 \item $m$ linearly independent cocyles $\psi_1,
\ldots, \psi_m$  in the $k'$-vector space $H^1(G_F,~L)$ with $m\geq n$ and such that
$\xi_i:=\psi_i +\sum_{j>n}a_{ij}\psi_j$
for some $ a_{ij}\in k,\ i=1,\ldots , n.$ \end{itemize}

Fix a lift $g'\in G_F$ of $g.$ We can identify $H^1(\langle g'\rangle ,~M)$ with $M^g.$  For ease
of notation, we set
\[
K_0:=K(\psi_j,j>n),\ \textrm{and}\ K_i:=K(\psi_i,\psi_j,j>n),\
i=1,\ldots, n.
\]
 By Proposition~\ref{disjoint}, the extensions $K_i, \ i=1,\ldots ,n $ are linearly
disjoint over $K_0.$

For each $1\leq i\leq n,$ the cocyle $\xi_i$ restricts to $\psi_i$ on $K_0.$ Since
$\psi_i(\textrm{Gal}(K_i/K_0)=L$ and $\xi_i(xg')=\psi_i(x)+\xi_i(g')$ for any
$x\in\textrm{Gal}(K_i/K_0),$ we see that the $k$-subspace of $M$ generated by $\psi_i(xg')$ is
$M$.

We claim that we can find $x_i\in\textrm{Gal}(K_i/K_0),\ 1\leq i\leq n,$ such that
$\xi_1(x_1g'),\ldots,\xi_n(x_ng')$ generate an
 $n$-dimensional subspace of $M/(g-1)M.$ To see this, first pick
$x_1\in\textrm{Gal}(K_1/K_0)$ such that $\xi_1(x_1g')$ is non-trivial when projected to
$M/(g-1)M.$ Having found $x_i\in\textrm{Gal}(K_i/K_0),\ i=1,\ldots , j$ with $j<n$ and such that
$\xi_1(x_1g'),\ldots, \xi_j(x_jg')$ generate a
 $j$-dimensional subspace of $M/(g-1)M$ we can find an
 $x_{j+1}\in\textrm{Gal}(K_{j+1}/K_0)$
 with the property that
 $\xi_{j+1}(x_{j+1}g')$ does not lie in the subspace of $M$ spanned by
 $\xi_1(x_ig'),\ldots,\xi_j(x_jg')$ and $(g-1)M$ (possible as this latter
 subspace has dimension $j+\dim_k(g-1)M<\dim_k
 M$).

 Finally, using Proposition \ref{disjoint}, we can find $x$ in the Galois group of $K_0$
 which acts as $x_i$ on each extension
 $K_i/K_0.$ Set $\widetilde{g}=xg'.$
 Then as $\xi_1(\widetilde{g}),\ldots,\xi_n(\widetilde{g}) $ generate an $n$-dimensional subspace of
 $M/(g-1)M,$ we see that the images of $\xi_i$ when restricted to
 $H^1(\langle \widetilde{g}\rangle,~M)$ are linearly independent.
\end{proof}

\begin{theorem} \label{Gal coh thm}
Let $M_1,\ldots , M_n$ be $n$ inequivalent, absolutely irreducible $k[G]$ modules with
$H^1(G,~M_i)=0,\ 1\leq i\leq n.$ We assume that we are given a place $v$ of $F$ and $k$-subspaces
$V_i\subseteq H^1(G_F,~M_i)$ with the following properties:
\begin{itemize}
\item  $M_1\oplus\cdots\oplus M_n$ is unramified at $v$, and
that $\Frob_v$ acts semi-simply on each $M_i$;
\item  $\dim V_i\leq
\dim H^1_\textrm{nr}(F_v,~M_i)$ for $i=1,\ldots, n.$
\end{itemize}

Under the above assumptions, we can find infinitely many places $w$ such that:
\begin{itemize}
\item $M_1\oplus\cdots\oplus M_n$ is unramified at $w$ and the images of $\Frob_w,\Frob_v$ in $G$
are the same;
\item Any cohomology class in $V_i$ is unramified at $w$;
 \item The restriction
map
$V_1\oplus\cdots\oplus V_n\longrightarrow H^1_\textrm{nr}(F_w,~M_1)\oplus\cdots\oplus
H^1_\textrm{nr}(F_w,~M_n)$ is injective.
\end{itemize}
\end{theorem}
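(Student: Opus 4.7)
The plan is to reduce the claim to $n$ independent applications of Proposition~\ref{injective1}, one per module $M_i$, to splice the resulting elements into a single element of a sufficiently large Galois group via a linear disjointness argument, and to finish with Chebotarev's density theorem.

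First, I would fix a basis of each $V_i$ and let $L_i$ be the compositum of $K$ with the fields $K(\psi)$ as $\psi$ runs over this basis; set $L:=L_1\cdots L_n$. By Lemma~\ref{coh-lemma}, each $L_i/F$ is Galois and $\Gal(L_i/K)$ embeds into a direct sum of copies of $M_i$, so every $G$-composition factor of $\Gal(L_i/K)$ is isomorphic to $M_i$. Since the $M_i$ are pairwise inequivalent simple $G$-modules, it follows that $L_1,\ldots,L_n$ are linearly disjoint over $K$: any nontrivial common subextension of $L_i$ and the compositum of the remaining $L_j$ over $K$ would have Galois group over $K$ with composition factors simultaneously drawn from $\{M_i\}$ and from $\{M_j:j\neq i\}$, which is impossible. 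Hence $\Gal(L/K)\cong\prod_i\Gal(L_i/K)$.

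Next, I would apply Proposition~\ref{injective1} separately to each pair $(M_i,V_i)$ with $g=\Frob_v$. Since $\Frob_v$ acts semisimply on $M_i$ and $M_i$ is unramified at $v$, one has $H^1_\textrm{nr}(F_v,M_i)\cong M_i^{\Frob_v}$, so the hypothesis $\dim V_i\leq\dim H^1_\textrm{nr}(F_v,M_i)$ is exactly the input demanded by the proposition. It yields a lift $\widetilde g_i\in G_F$ of $\Frob_v$ such that $V_i\hookrightarrow H^1(\langle\widetilde g_i\rangle,M_i)$ is injective. Every cocycle in $V_i$ factors through $\Gal(L_i/F)$, so this injectivity depends only on the image $\overline{\widetilde g}_i\in\Gal(L_i/F)$ of $\widetilde g_i$.

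Linear disjointness now lets me glue the $\overline{\widetilde g}_i$, all of which restrict to $\Frob_v$ on $K$, into a single element $\widetilde g\in\Gal(L/F)$ with $\widetilde g|_{L_i}=\overline{\widetilde g}_i$ for every $i$. Chebotarev's density theorem applied to $L/F$ then produces infinitely many primes $w$ whose Frobenius class equals that of $\widetilde g$, and for each such $w$ the following hold: $M_1\oplus\cdots\oplus M_n$ is unramified at $w$ because its splitting field lies inside $L$; the image of $\Frob_w$ in $G$ equals $\Frob_v$; every cocycle in each $V_i$ is unramified at $w$ because it factors through $\Gal(L/F)$; and under the identification $H^1_\textrm{nr}(F_w,M_i)\cong M_i^{\Frob_w}$, the restriction $V_i\to H^1_\textrm{nr}(F_w,M_i)$ matches the evaluation from Proposition~\ref{injective1} and is therefore injective. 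Injectivity of the direct sum is then automatic. The main obstacle is the linear disjointness step, which genuinely uses the inequivalence of the $M_i$ to force $L_i$ and the compositum of the other $L_j$ to intersect in $K$; once this is available, the rest is a routine combination of Proposition~\ref{injective1} and Chebotarev.
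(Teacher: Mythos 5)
Your proposal is correct and follows essentially the same route as the paper: form the splitting fields of the $V_i$ over $K$, use inequivalence of the $M_i$ to get linear disjointness, apply Proposition~\ref{injective1} to each $(M_i,V_i)$ with $g=\Frob_v$, glue the resulting Frobenius elements over $\Gal(K/F)$, and finish with Chebotarev. The only cosmetic difference is that the paper fixes one common lift $g'$ first and perturbs it by elements $x_i\in\Gal(K(V_i)/K)$, whereas you apply the proposition independently and then splice via the fiber-product description of $\Gal(L/F)$; both are valid and equivalent.
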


\begin{proof} Denote by $K(V_i)$ the splitting field for $V_i$
over $K,$ and by $K(V_1,\ldots ,V_n)$ the compositum of $K(V_i)$. The extensions
$K(V_i)$ are
linearly disjoint over $K$ because $\textrm{Gal}(K(V_i)/K)$ is isomorphic to a subgroup
of $M_i.$

Take $g\in G$ to be an element which $\Frob_v$ lifts and let $g'\in
\textrm{Gal}(K(V_1,\ldots ,V_n)/F)$ be a lift of $g$.
 By Proposition~\ref{injective1}, we can find
$x_i\in\textrm{Gal}(K(V_i)/K)$ such that
$V_i\longrightarrow H^1(\langle x_ig'\rangle ,~M_i)$ is
injective. Using disjointness of the $K(V_i)$'s, we can find an
$x\in \textrm{Gal}(K(V_1,\ldots,V_n)/K)$ such that $x$ acts on $K(V_i)$ as $x_i.$ By
the Chebotarev density theorem, we can then find
a place $w$ of $F$ lifting $xg'$ and unramified in $K(V_1,\ldots ,V_n).$  It is now
immediate
such a $w$ satisfies the properties asked for.
\end{proof}

\section{Local deformation conditions } \label{local analysis}

Throughout this section $k$ is a finite field of characteristic $\ell$ and $p$ is a
prime different from $\ell$. We shall look at deformation conditions for  a finite extension of $\Q_p.$ In particular, our objective  is to construct examples of local deformation
conditions which admit a large (uni)versal deformation ring. The precise nature of
what large should mean is the content of the following definition.

\begin{definition} Let $F$ be a finite extension of $\Q_p$ and let
$\rb:G_F\longrightarrow GL_N(k)$ be a representation. We say that a deformation
$\cD$ for $\rb$ is  \emph{well-behaved} if $\cD$ is smooth and
$\dim\ T\cD=\dim \ H^0(G_F,\Ad\rb)$.
\end{definition}

\begin{example}\label{ex 1}
If $\rb$ is unramified then the class of unramified liftings is a well-behaved
deformation condition. The unrestricted deformation condition is well-behaved if
$H^2(G_F,\Ad\rb)=(0)$.
\end{example}

We can now state our
 main result asserting the existence of well-behaved deformation conditions.

\begin{theorem} \label{main local}
Let $F$ be a finite extension of $\Q_p,$  let
$k$ be a finite field of characteristic $\ell\neq p$, and let
$\rb:G_F\longrightarrow GL_N(k)$ be a representation. Assume that all irreducible
components occurring in the semi-simplification of $\rb$ are absolutely irreducible. If
$p\leq N$ and $\rb$ is wildly ramified assume that $[F(\zeta_\ell):F]\geq 3N$ where $\zeta_\ell$ is an $\ell$-th root of
unity. Then the following hold:
\begin{enumerate}
\item[(a)] There  exists a well-behaved deformation condition $\cD$.
 \item[(b)] Suppose $\chi:G_F\longrightarrow W^\times$ is a character lifting
 $\det\rb$. Assume that $N,\ell$ are co-prime. Then liftings of type $\cD$
and determinant $\chi$ is a smooth deformation condition for $\rb$ and the dimension
of its  tangent is equal to
$\dim H^0(G_F,\Ad^0\rb).$
\end{enumerate}
\end{theorem}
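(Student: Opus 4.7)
The plan is to build the required deformation condition $\cD$ by exploiting the semisimplicity of $\rb|_{P_F}$, where $P_F\subseteq G_F$ is the wild inertia subgroup (semisimple because $P_F$ is pro-$p$ and $\ell\neq p$). Decompose $V(\rb)=\bigoplus_\sigma V_\sigma$ into $P_F$-isotypic components; this decomposition is permuted by the action of the full decomposition group $G_F$. Define $\cD$ to consist of those liftings $\rho:G_F\longrightarrow GL_N(A)$ such that $\rho|_{P_F}$ is $GL_N(A)$-conjugate to the canonical semisimple lift of $\rb|_{P_F}$ determined by this isotypic decomposition. The canonical lift exists and is unique up to conjugation by Schur--Zassenhaus, since $\rb(P_F)$ has order prime to $\ell$. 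A deformation of type $\cD$ is then essentially specified by lifting a tame topological generator $\sigma$ of $I_F/P_F$ and a Frobenius lift $\tau$ subject to the tame relation $\tau\sigma\tau^{-1}=\sigma^q$ (with $q$ the residue cardinality of $F$), preserving each isotypic summand up to the natural $G_F$-permutation.

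Axioms (DC0)--(DC2) then follow routinely because $\cD$ is defined by a closed conjugacy-class condition on a single restriction. Smoothness of $\cD$ is the core of the argument: given a small surjection $A\twoheadrightarrow B$ in the CNL category and a type-$\cD$ lift $\rho_B$, one first extends $\rho_B|_{P_F}$ uniquely (up to conjugation) to a lift over $A$ by rigidity of prime-to-$\ell$-order representations, and then lifts $\sigma$ and $\tau$ subject to the tame relation by a Hensel-type argument. For the tangent-space dimension one uses the local Euler characteristic $\chi(G_F,\Ad\rb)=0$ (valid since $p\neq\ell$) and local Tate duality to get
\[
\dim H^1(G_F,\Ad\rb)=\dim H^0(G_F,\Ad\rb)+\dim H^0(G_F,\Ad\rb(1)),
\]
and then verifies that the cocycles in $T\cD$ are precisely those whose restriction to $P_F$ is a coboundary (reflecting the fixed conjugacy class of $\rho|_{P_F}$) and whose tame part satisfies the linearisation of the tame relation. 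This subspace has codimension $\dim H^0(G_F,\Ad\rb(1))$ in $H^1(G_F,\Ad\rb)$, giving the desired equality $\dim T\cD=\dim H^0(G_F,\Ad\rb)$.

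For part (b), pass to the subcategory of type-$\cD$ liftings with determinant $\chi$. Since $\gcd(N,\ell)=1$, the trace provides a $G_F$-equivariant splitting $\Ad\rb=\Ad^0\rb\oplus k$, so the determinant condition cleanly cuts out the scalar factor. Smoothness transfers from (a): lift without the determinant constraint, then multiply by a suitable $N$-th root of the ratio $\chi/\det\rho$, which exists in $A^\times$ because $N$ is a unit in $W(k)$. The resulting tangent space lies inside $H^1(G_F,\Ad^0\rb)$ and has dimension $\dim H^0(G_F,\Ad^0\rb)$ by the parallel Euler-characteristic calculation for $\Ad^0\rb$.

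The main obstacle is the smoothness step in the wildly ramified case when $p\leq N$. Here the eigenvalues of the action of a tame generator on the isotypic summands are roots of unity whose mod-$\ell$ reductions can coincide when $q$ is small, obstructing the naive Hensel lifting of the tame relation. The hypothesis $[F(\zeta_\ell):F]\geq 3N$ is imposed precisely to break this degeneracy: it guarantees that $\overline\omega(\Frob)$ has order at least $3N$ in $k^\times$, so that inside $W(k)^\times$ there are enough distinct $\ell$-power roots of unity for the tame eigenvalues on the (at most $N$) isotypic summands to be separated in characteristic zero while still lifting the residual picture correctly, thereby killing the cohomological obstruction.
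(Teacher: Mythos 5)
There is a genuine gap, and it sits at the very centre of your construction. The condition you impose --- that $\rho|_{P_F}$ be $GL_N(A)$-conjugate to the canonical prime-to-$\ell$ lift of $\rb|_{P_F}$ --- is satisfied by \emph{every} lift of $\rb$: since $P_F$ is pro-$p$ and $\ker\left(GL_N(A)\to GL_N(k)\right)$ is pro-$\ell$, Schur--Zassenhaus together with the vanishing of $H^1(P_F,\Ad\rb\otimes J)$ shows that any deformation of $\rb|_{P_F}$ is conjugate to the Teichm\"{u}ller one. Likewise, "preserving the isotypic summands up to permutation" and "satisfying the linearisation of the tame relation" are automatic for any cocycle on $G_F$. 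So as stated your $\cD$ is the unrestricted deformation condition; its tangent space has dimension $\dim H^0(G_F,\Ad\rb)+\dim H^0(G_F,\Ad\rb(1))$ and it is smooth only when $H^2(G_F,\Ad\rb)=(0)$ --- i.e.\ precisely outside the cases the theorem is needed for. Your claimed codimension $\dim H^0(G_F,\Ad\rb(1))$ is therefore not cut out by any condition you have actually imposed. The missing idea is that one must constrain the \emph{tame} inertia image: the paper fixes $\rho(\tau)$ to be conjugate to a specific characteristic-zero matrix $J(\rb)$ (the Teichm\"{u}ller lift of the Jordan form of $\rb(\tau)$), which is a genuine restriction whenever $\rb(\tau)$ has nontrivial unipotent part or whenever eigenvalue congruences would otherwise let the Jordan type deform. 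Verifying (DC2) and smoothness for this condition is the technical heart of the argument (the $R[T]$-module lifting lemmas, Propositions~\ref{lifting1}--\ref{qfrob}), none of which is replaced by your Hensel-type appeal.

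Moreover, in the wildly ramified case the paper does not work with $P_F$-isotypic pieces at all: it first splits $\rb$ into blocks $\rb_i$ with $\Hom_{k[G_F]}(\rb_i,\rb_j(r))=(0)$ for $i\neq j$ and all $r$ (Theorem~\ref{directsum}), and then handles a block whose constituents are Tate twists of a single wild absolutely irreducible $\overline\theta$ either by writing it as $\rb_0\otimes\overline\theta$ and tensoring a tame deformation condition with the unique lift $\theta$ (when $p\leq N$), or by induction from the subfield $L$ stabilising a wild character of an abelian normal $p$-subgroup (when $p>N$). Your reading of the hypothesis $[F(\zeta_\ell):F]\geq 3N$ is also off: it is not there to separate tame eigenvalues in $W(k)^\times$, but to force $ns\geq 3N$, where $s$ is the least positive integer with $\overline\theta\sim\overline\theta(s)$, so that the multiplicity $m$ satisfies $1\leq m\leq s-2$; this is exactly what makes the vanishing $H^i(G_F,\Ad^0\overline\theta(j))=(0)$ for $|j|\leq s-2$ (Lemma~\ref{tensorlemma}) available, and hence the tensor-product deformation condition smooth with the correct tangent space. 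Part (b) of your argument (twisting by an $N$-th root using $\gcd(N,\ell)=1$) does match the paper, but it rests on part (a), which as it stands is not established.
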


To construct a well-behaved deformation condition $\cD$ as claimed (and also to outline the structure of this section), we proceed as follows:
\begin{itemize}
\item We would like to build up $\cD$ from well-behaved deformation conditions for
some decomposition of $\rb$. In section \ref{products} we show that a good way of
decomposing  $\rb$ is to make sure that the basic blocks have no common irreducible components, even after taking Tate twists. (See relation \ref{prod relation}.)
\item The blocks can then be analysed separately. There are essentially three cases we
need to consider.
\begin{itemize}
\item Firstly, the case when a given residual representation is tamely ramified. The
deformation condition in this case is to obtained by specifying a Jordan--Holder
decomposition for a generator of tame inertia. See section \ref{tamely ramified}.
\item The residual representation is a tensor product of two smaller representations.
In section \ref{tensor prod} we study when we can construct the candidate well-behaved
deformation by using tensor products.
\item The residual representation is induced, in which case we try to induce a known
well-behaved deformation condition. This is done in section \ref{induced repn}
\end{itemize}
\item Finally, we verify that the hypotheses of Theorem \ref{main local} guarantee
applicability of the preceding steps and complete the proof in section
 \ref{main local proof}.
\end{itemize}

The second part of Theorem \ref{main local} is straightforward, and we deal with it
right away:

\begin{proof}[Proof of Theorem \ref{main local} (b)]
 We need only check smoothness, and for that if suffices to check that any
deformation $\rho:G_F\longrightarrow GL_N(A)$ of type $\cD$ can be twisted to a
deformation with determinant $\chi$. If $\psi: G_F\longrightarrow A^\times$
is a character and we want $\chi=\det(\psi\rho)$, then  $\psi^N=\chi\det\rho^{-1}$.
We can find such a character $\psi$ because
$\chi\det\rho^{-1}:G_F\longrightarrow
1+\mathfrak{m}_A$
and
\[
\begin{CD}
1+\mathfrak{m}_A@>{x\rightarrow x^N}>>1+\mathfrak{m}_A
\end{CD}
\]
is an isomorphism.\end{proof}

\subsection{Products of deformation conditions}
\label{products}
Let $F$ be a finite extension of $\Q_p$. We assume  we are given
  representations
$\rb_i:G_F\longrightarrow GL_{d_i}(k), \ i=1,\ldots ,
n$ satistfying
\begin{align} \label{prod relation}
\Hom_{k[G_F]}\left(\rb_i,\rb_j(r)\right)=(0)
\end{align}
for $i\neq j, r\in\Z$, and a deformation condition $\cF_i$ for $\rb_i$, $i=1,\ldots ,n$.
 Set $\rb:=\rb_1\oplus\cdots\oplus\rb_n$ and $N:=d_1+\ldots+d_n$.

We shall say that a representation $\rho:G_F\longrightarrow GL_N(A)$  is of type
$\cF:=\cF_1\oplus\cdots\oplus\cF_n$ if $\rho\sim\rho_1\oplus\cdots\oplus\rho_n$ with
$(A,\rho_i)\in\cF_i.$ We denote by $\cF$
the full subcategory of $\rep_N(G_F;k)$ consisting of objects $(A,\rho)$ with $\rho$ of type
$\cF_1\oplus\cdots\oplus\cF_n.$ We then have the following

\begin{theorem}
$\cF$ is a deformation condition for $\rb.$ The natural map
\[
\left((A,\rho_i)\in
\cF_i\right)_{i=1}^n\longrightarrow (A,\rho_1\oplus\cdots\oplus\rho_n)
\]
induces an
isomorphism of tangent spaces
\[
T\cF\cong
T{\cF_1}\oplus\cdots\oplus T{\cF_n},
\]
and  $\cF$ is
well-behaved if each $\cF_i$ is well-behaved.\label{directsum}
\end{theorem}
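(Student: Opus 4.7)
The plan is to verify the deformation-condition axioms (DC0)--(DC2), then read off the tangent space identification and well-behavedness from the resulting decomposition. Axioms (DC0), (DC1), and the ``only if'' half of (DC2) are immediate from the definition: if $\rho_A \sim \rho_{A,1} \oplus \cdots \oplus \rho_{A,n}$ with each $\rho_{A,i} \in \cF_i$, then reducing mod $\m_A$ gives $\rb$, and pushing forward along a morphism $f: A \to B$ yields the decomposition $\oplus f\rho_{A,i}$, in which each summand is of type $\cF_i$ by (DC1) for $\cF_i$.

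The substantive point, and the main obstacle, is the ``if'' direction of (DC2). The key ingredient is the following sub-lemma: for any Artinian CNL $W(k)$-algebra $R$, if $V, V'$ are $R[G_F]$-modules lifting $\rb_i$ and $\rb_j$ with $i \neq j$, then $\Hom_{R[G_F]}(V, V') = 0$. I would prove this by showing $\phi(V) \subseteq \m_R^n V'$ for every $n \geq 1$. The base case $n = 1$ follows because $\phi \bmod \m_R : \rb_i \to \rb_j$ lies in $\Hom_{k[G_F]}(\rb_i, \rb_j) = 0$, the $r = 0$ case of the Hom hypothesis. For the inductive step, assuming $\phi(V) \subseteq \m_R^n V'$, the composition with $\m_R^n V' \twoheadrightarrow \m_R^n V'/\m_R^{n+1} V' \cong (\m_R^n/\m_R^{n+1}) \otimes_k \rb_j$ factors through a $G_F$-equivariant map $V/\m_R V \cong \rb_i \to (\m_R^n/\m_R^{n+1}) \otimes_k \rb_j$, which vanishes by the same hypothesis. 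Since $\m_R^n = 0$ for large $n$, we obtain $\phi = 0$.

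Granted the sub-lemma, (DC2) runs as follows. Up to strict equivalence over $A \times_C B$ I may assume $\pi_A \rho = \rho_{A,1} \oplus \cdots \oplus \rho_{A,n}$ literally, and then $M_B^{-1} (\pi_B \rho) M_B = \rho_{B,1} \oplus \cdots \oplus \rho_{B,n}$ for some $M_B \in GL_N(B)$. The element $P := \beta M_B \in GL_N(C)$ intertwines $\oplus \beta \rho_{B,i}$ with $\oplus \alpha \rho_{A,i}$ as $C[G_F]$-representations; its off-diagonal blocks are $C[G_F]$-maps between lifts of distinct $\rb_i$'s and hence vanish by the sub-lemma, so $P = P_1 \oplus \cdots \oplus P_n$ is itself block diagonal. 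Lifting each $P_i$ to some $\tilde P_i \in GL_{d_i}(B)$ and replacing $M_B$ by $M_B \tilde P^{-1}$ with $\tilde P = \oplus \tilde P_i$, I arrange $\beta M_B = I$ while keeping $M_B^{-1} (\pi_B \rho) M_B$ block diagonal (the $i$-th block now being $\tilde P_i \rho_{B,i} \tilde P_i^{-1}$, still of type $\cF_i$ by (DC1)). Then $(I, M_B) \in GL_N(A) \times_{GL_N(C)} GL_N(B) = GL_N(A \times_C B)$ is a valid lift; conjugating $\rho$ by this element preserves the block-diagonal form of $\pi_A \rho$ and forces $\pi_B \rho$ into block-diagonal form, so $\rho$ itself decomposes over $A \times_C B$, with each summand of type $\cF_i$ by (DC2) for $\cF_i$.

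The remaining assertions follow by inspection. The tangent space identification is transparent: a type-$\cF$ deformation over $k[\epsilon]/(\epsilon^2)$ can, after strict equivalence, be written as $\oplus \rho_i$, so its cocycle class in $H^1(G_F, \Ad \rb) = \bigoplus_{i,j} H^1(G_F, \Hom(\rb_j, \rb_i))$ has support only on the diagonal summands, and conversely any $(\xi_i) \in \bigoplus_i T\cF_i$ assembles into a type-$\cF$ deformation via direct sum. For well-behavedness, smoothness transfers summand by summand using smoothness of each $\cF_i$, and the Hom-vanishing for $i \neq j$ gives $H^0(G_F, \Ad \rb) = \bigoplus_i H^0(G_F, \Ad \rb_i)$, so $\dim T\cF = \sum_i \dim T\cF_i = \sum_i \dim H^0(G_F, \Ad \rb_i) = \dim H^0(G_F, \Ad \rb)$ as required.
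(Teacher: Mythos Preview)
Your proof is correct, but it follows a genuinely different and more elementary route than the paper's. The paper deduces the theorem from a stronger auxiliary result (Proposition~\ref{H1}): \emph{every} lift of $\rb$ to an Artinian CNL algebra decomposes uniquely as $\rho_1\oplus\cdots\oplus\rho_n$. That proposition is proved by induction on length using the vanishing of $H^1$ and $H^2$ of $\Hom(\rb_i,\rb_j)$ for $i\neq j$ (Lemma~\ref{H^i}), which in turn requires local Tate duality, the Euler characteristic formula, and the Tate-twist hypothesis~\eqref{prod relation} for $r=\pm 1$, not just $r=0$.

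By contrast, your block-diagonalisation argument for (DC2) rests only on your sub-lemma $\Hom_{R[G_F]}(V,V')=0$, whose filtration proof uses nothing beyond the $r=0$ case of~\eqref{prod relation} and works over any base (no local duality needed). So your proof establishes the theorem under a strictly weaker hypothesis and without any arithmetic input; what you give up is the paper's stronger statement that \emph{all} lifts split, which is sometimes useful elsewhere. One small point worth making explicit in your write-up: when you say ``up to strict equivalence over $A\times_C B$ I may assume $\pi_A\rho$ is literally block diagonal'', you are implicitly using that any $M_A\in GL_N(A)$ conjugating $\pi_A\rho$ into block form can be adjusted by a block-diagonal element (via the same centraliser argument underlying your sub-lemma) to be $\equiv I$ modulo $\m_A$, and then lifted to $A\times_C B$. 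This is routine but deserves a sentence.
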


Theorem \ref{directsum} is an immediate consequence of the following proposition:
\begin{proposition} Let $R$ be a CNL algebra, and let
$\rho:G_F\longrightarrow GL_N(R)$ be a lift of $\rb.$ We then have, up to  strictly
equivalence, a unique decomposition $\rho\cong\rho_1\oplus\cdots \oplus \rho_n$ where $\rho_i:G_F\longrightarrow GL_{d_i}(R)$ is
a lift of $\rb_i.$\label{H1}\end{proposition}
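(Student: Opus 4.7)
The plan is to prove this by devissage, reducing to a cohomological vanishing statement for $H^1$ at the residual level. First I would make two reductions. By grouping summands, $\rb_1$ and $\rb_2 \oplus \cdots \oplus \rb_n$ still satisfy the Hom-vanishing hypothesis componentwise, so a decomposition in the case $n = 2$ reduces the problem to one with strictly fewer summands; iterating, it suffices to treat $n = 2$. Next, writing $R = \varprojlim_k R/\mathfrak{m}_R^k$, the Artinian case together with the uniqueness clause (proved separately) lets one choose compatible decompositions of $\rho \bmod \mathfrak{m}_R^k$ and assemble them in the inverse limit, so it is enough to handle $R$ Artinian.

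For Artinian $R$ I would induct on the length. Choose a minimal ideal $J \subset R$; then $J\mathfrak{m}_R = 0$, so $J$ is a $k$-vector space with trivial $G_F$-action. By the inductive hypothesis, $\rho \bmod J$ is strictly equivalent to $\rho'_1 \oplus \rho'_2$; after conjugating $\rho$ by any $GL_N(R)$-lift of this strict equivalence, I can assume $\rho$ itself is block-diagonal modulo $J$. Writing $\rho(g) = \bigl(\begin{smallmatrix} A(g) & B(g) \\ C(g) & D(g) \end{smallmatrix}\bigr)$ in blocks of sizes $d_1, d_2$, the off-diagonal entries $B, C$ take values in $J$. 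A short computation starting from $\rho(gh) = \rho(g)\rho(h)$ shows that $X(g) := B(g)\,\rb_2(g)^{-1}$ is a $1$-cocycle in $Z^1(G_F, \Hom_k(\rb_2, \rb_1) \otimes_k J)$ (and similarly for $C$), and that conjugating $\rho$ by a matrix of the form $I + \bigl(\begin{smallmatrix} 0 & y_{12} \\ y_{21} & 0 \end{smallmatrix}\bigr)$ with $y_{ij}$ valued in $J$ alters $X$ (respectively the cocycle for $C$) by a coboundary. Thus the obstruction to block-diagonalising $\rho$ over $R$ lies in $H^1(G_F, \Hom_k(\rb_2, \rb_1)) \oplus H^1(G_F, \Hom_k(\rb_1, \rb_2))$.

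The main obstacle, and essentially the only one, is the vanishing of these two $H^1$ groups. Since $F$ is a finite extension of $\Q_p$ with $p \neq \ell$ (the standing hypothesis of Section~\ref{local analysis}), the local Euler characteristic of any finite $\ell$-torsion $G_F$-module vanishes, so it suffices to check $H^0 = H^2 = 0$. The $H^0$ equals $\Hom_{k[G_F]}(\rb_j, \rb_i)$ and vanishes by the $r = 0$ case of \eqref{prod relation}. By Tate local duality, $H^2(G_F, \Hom_k(\rb_j, \rb_i))$ is Pontryagin-dual to $\Hom_{k[G_F]}(\rb_i, \rb_j(1))$, which vanishes by the $r = 1$ case. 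Hence $H^1 = 0$, the cocycle $X$ is a coboundary, the required conjugation exists, and the inductive step is complete. Uniqueness is then a separate Nakayama-style argument: any strict equivalence $M \equiv I \pmod{\mathfrak{m}_R}$ between two block-diagonal lifts has off-diagonal blocks which are $R[G_F]$-equivariant maps between lifts of $\rb_i$ and $\rb_j$ ($i \neq j$) vanishing modulo $\mathfrak{m}_R$; the $r = 0$ hypothesis together with Nakayama forces them to vanish identically, so $M$ is block-diagonal and gives the desired strict equivalences $\rho_i \sim \rho'_i$.
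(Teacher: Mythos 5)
Your proof is correct and rests on the same cohomological input as the paper's, namely the vanishing of $H^*(G_F,\Hom(\rb_i,\rb_j))$ for $i\neq j$, which you deduce (as the paper does in Lemma~\ref{H^i}) from relation~\eqref{prod relation} together with local Tate duality and the local Euler characteristic formula. The execution differs in one respect: the paper first lifts each block $\rho_i'$ from $R/J$ to $R$ by showing the obstruction classes $c_i\in H^2(G_F,\Hom(\rb_i,\rb_i))\otimes J$ vanish, and then corrects the discrepancy between $\rho$ and $\widetilde\rho_1\oplus\cdots\oplus\widetilde\rho_n$ by a class in $H^1(G_F,\Ad\rb)\otimes J$ whose off-diagonal part dies; you instead conjugate $\rho$ to be block-diagonal modulo $J$, note that the diagonal blocks are then automatically homomorphisms (the cross terms land in $J^2=0$), and kill the off-diagonal blocks as cocycles in $H^1(G_F,\Hom(\rb_j,\rb_i))\otimes J=(0)$. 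This bypasses the $H^2$-obstruction step entirely, a small but genuine economy; both arguments ultimately hinge on the same off-diagonal vanishing. One caveat on your uniqueness step: ``Nakayama'' is not quite the right invocation, since the reduction of $\Hom_{R[G_F]}(\rho_2,\rho_1')$ modulo $\mathfrak{m}_R$ need not inject into $\Hom_{k[G_F]}(\rb_2,\rb_1)$. The clean statement is that $\Hom_{R[G_F]}(\rho_i,\rho_j')=(0)$ for $i\neq j$ over any Artinian $R$, proved by the same induction on length from the residual vanishing of $H^0$ and $H^1$ (this is part~(2) of Lemma~\ref{H^i} with $*=0$); with that substitution your argument is complete.
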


The proof of Proposition \ref{H1} relies on there being no cohomological relations between lifts of $\rb_i$ and $\rb_j$ when $i\neq j$. More precisely, we need the following lemma:
\begin{lemma}
Let $*=0$, $1$ or $2$.
\begin{enumerate}
\item If $i\neq j$ then $H^*\left(G_F,\Hom(\rb_i,\rb_j)\right)=(0)$ if $i\neq j$. Consequently, we have
\begin{align*}
H^*\left(G_F,\Hom(\rb,\rb)\right)
\cong H^*\left(G_F,\Hom(\rb_1,\rb_1\right)\oplus\cdots\oplus
H^*\left(G_F,\Hom(\rb_n, \rb_n)\right).
\end{align*}
\item  Let $A$ be an Artinian CNL algebra, and let
$\rho_i:G_F\longrightarrow GL_{d_i}(A)$, $\rho_j:G_F\longrightarrow GL_{d_j}(A)$ be lifts of
$\rb_i, \rb_j, i\neq j.$ Then
\[
H^*(G_F,\Hom(\rho_i, \rho_j))=(0).
\]
\end{enumerate}
\label{H^i}
\end{lemma}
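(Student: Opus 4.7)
My plan is to deduce part (1) directly from the hypothesis \eqref{prod relation} via local Tate duality and the local Euler--Poincar\'e formula, and then to bootstrap part (2) by induction on the length of $A$.

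For part (1) with $\ast=0$: the hypothesis at $r=0$ gives $H^0(G_F,\Hom(\rb_i,\rb_j))=\Hom_{k[G_F]}(\rb_i,\rb_j)=(0)$. For $\ast=2$: local Tate duality pairs $H^2(G_F,\Hom(\rb_i,\rb_j))$ with the $H^0$ of its Tate-twisted $k$-linear dual, and since $\Hom(\rb_i,\rb_j)^{*}\cong\Hom(\rb_j,\rb_i)$ as $G_F$-modules, this $H^0$ is $\Hom_{k[G_F]}(\rb_j,\rb_i(1))$, which vanishes by \eqref{prod relation} at $r=1$. For $\ast=1$: since $p\neq\ell$, the local Euler--Poincar\'e characteristic of any finite $\ell$-primary $G_F$-module vanishes, so $\dim H^1=\dim H^0+\dim H^2=0$. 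The direct sum decomposition then follows from $\Hom(\rb,\rb)=\bigoplus_{i,j}\Hom(\rb_i,\rb_j)$ after discarding the off-diagonal summands whose cohomology has just been shown to vanish.

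For part (2), I induct on the least integer $n$ with $\mathfrak{m}_A^n=0$. The base case $n=1$ is part (1). For $n\geq 2$, choose a small extension $0\to I\to A\to A/I\to 0$ with $\mathfrak{m}_A\cdot I=0$, so that $I$ is a $k$-vector space on which $G_F$ acts only through the residual quotient. Because $\rho_i$ and $\rho_j$ are free $A$-modules, tensoring this extension with $\Hom_A(\rho_i,\rho_j)$ yields a short exact sequence of $G_F$-modules
\[
0\longrightarrow\Hom(\rb_i,\rb_j)\otimes_k I\longrightarrow\Hom_A(\rho_i,\rho_j)\longrightarrow\Hom_{A/I}(\rho_i\bmod I,\rho_j\bmod I)\longrightarrow 0.
\]
The left term has trivial cohomology by part (1) (cohomology with values in a trivial $k$-module commutes with the tensor factor $I$), and the right term has trivial cohomology by the inductive hypothesis; the long exact sequence therefore forces $H^*(G_F,\Hom_A(\rho_i,\rho_j))=0$.

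The main delicacy is bookkeeping rather than substance: one must check the identification $\Hom_A(\rho_i,\rho_j)\otimes_A I\cong\Hom(\rb_i,\rb_j)\otimes_k I$, which holds precisely because $\mathfrak{m}_A\cdot I=0$ so that only the residual representations act on $I$, and the exactness of the displayed sequence, which uses $A$-freeness of $\rho_i$ and $\rho_j$. Beyond these points the argument is purely formal.
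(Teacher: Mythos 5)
Your proposal is correct and follows essentially the same route as the paper: part (1) via the vanishing of $H^0$ from \eqref{prod relation}, local Tate duality for $H^2$, and the local Euler characteristic formula for $H^1$; part (2) via the short exact sequence $0\to\Hom(\rb_i,\rb_j)\otimes I\to\Hom(\rho_i,\rho_j)\to\Hom(\rho_i\bmod I,\rho_j\bmod I)\to 0$ and induction on the length of $A$. The only nitpick is that your induction variable should consistently be the length of $A$ (as in your opening paragraph) rather than the nilpotency degree of $\mathfrak{m}_A$, since passing to $A/I$ for a small extension need not lower the latter.
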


\begin{proof} The first part follows easily from relation \ref{prod relation}, local duality and the local Euler characteristic formula.

For the second part, let $J$ is an ideal of $A$ with
$\mathfrak{m}_AJ=(0)$. Then
\[
0\longrightarrow \Hom(\rho_i,\rho_j)\otimes J
\longrightarrow\Hom(\rho_i,\rho_j)\longrightarrow \Hom(\rho_i\,\textrm{mod}\,
J,\rho_j\,\textrm{mod}\, J)\longrightarrow 0
\]
 is an exact sequence of $G_F$-modules. Induction
along with the first part then completes the proof.\end{proof}

\begin{proof}[Proof of Proposition \ref{H1}] We can take $R$ to be Artinian. Let $\mathfrak{m}$
be its maximal ideal, and let $J\neq (0)$ be an ideal of $R$ killed by $\mathfrak{m}.$ Suppose
that
\[
\rho\pmod{J}=\rho_1'\oplus\cdots \oplus \rho_n'
\]
 with
$\rho_i':G_F\longrightarrow GL_{d_i}(R/J)$ lifting $\rb_i.$ The obstruction to lifting
$\rho_i'$ to a representation $G_F\longrightarrow GL_{d_i}(R)$ is a cohomology class
 \[
 c_i\in H^2(G_F, \Hom(\rb_i\otimes J,\rb_i\otimes
J))=H^2(G_F,\Hom(\rb_i, \rb_i))\otimes J.
\]
 Since $\rho\pmod{J}$
lifts to $R,$ $c_1+\ldots+c_n$ vanishes in $H^2(G_F,
\Hom(\rb, \rb))\otimes J.$ Hence $c_1,\ldots, c_n$ are trivial
by the first part of Lemma~\ref{H^i}.

We can therefore find lift each $\rho_i'$ to
$\widetilde\rho_i:G_F\longrightarrow GL_{d_i}(R)$.
If we set
$\widetilde\rho:=\widetilde\rho_1\oplus\cdots\oplus\widetilde\rho_n$, then
$\rho=(I+\xi)\widetilde\rho$ with $\xi\in H^1(G_F,\Hom(\rb\otimes
J,\rb\otimes J))$. By the first part of  Lemma~\ref{H^i}, we see that $\xi=\xi_1+\ldots +\xi_n$ with
$\xi_i\in H^1(G_F, \Hom(\rb_i\otimes J,\rb_i\otimes J))$. The
required decomposition for $\rho$ follows. The uniqueness part follows from the second part of Lemma \ref{H^i}. \end{proof}

\subsection{Tamely ramified representations} \label{tamely ramified}

We now consider the problem of constructing a well-behaved deformation condition when the residual representation is tamely ramified. Throughout this subsection, $F$ is a fixed finite extension of $\Q_p$ with residue field of order $q$. We denote by
$F^\text{nr}$ and $F^\text{tr}$ the maximal unramified and the maximal tamely ramified extensions of $F$, and  fix
\begin{itemize}
 \item a topological generator $\tau$ of
$\text{Gal}(F^\text{tr}/F^\text{nr}),$
\item  a lift $\sigma$ of Frobenius to
$\Gal(F^\text{tr}/F).$
\end{itemize}

The letter $T$ denotes a fixed indeterminate. For  a tamely ramified representation
$\rho:G_F\longrightarrow GL_n(R),$ we shall view the underlying module $V(\rho)$ as an
$R[T]$-module where $T$ acts via $\tau$. (We shall freely identify tamely ramified
representations with representations of $\text{Gal}(F^\text{tr}/F)$.)  Note that the action of $\sigma$ provides
added structure.

To describe this further, we first fix some notation:
\begin{itemize}
\item $\phi_{q} :R[T]\longrightarrow R[T]$ is the injective homomorphism which sends
$T$ to $T^q$ (and is the identity on $R$).

\item If $M$ is an
$R[T]$-module,  then  $\phi_{q}^*M$ is  the $R[T]$-module
with underlying set $M$
and action twisted by $\phi_q$ i.e.
$(f(T),m)\longrightarrow f(T^q)m$ for all $f(T)\in R[T]$.
\end{itemize}
 Then, with notation as before,
specifying the action of $\sigma$ on $V(\rho)$ is equivalent to specifying an isomorphism
$V(\rho)\longrightarrow \phi_q^*V(\rho)$ of $R[T]$-modules. Conversely, these determine
the representation completely.

Now let $\rb: G_F\longrightarrow GL_n(k)$ be a tamely ramified representation.
and let $(a_{ij})$ be
the (upper triangular) Jordan normal form of $\rb(\tau)$ (so $a_{ij}=0$ if $i<j$ or
$i>j+1$, and $a_{i,i+1}$ is $0$ or $1$). We define the $n\times n$ matrix $J(\rb)$ by
\[
J(\rb):=(\widehat{a_{ij}})\quad \text{where $\widehat{a_{ij}}$ is the Teichm\"{u}ller lift of $a_{ij}$.}
\]
 Finally, let
$\cD_{J(\rb)}$ be the full subcategory of $\rep_n(G_F;k)$ consisting of objects
$(A,\rho)$ with $\rho:G_F\longrightarrow GL_n(A)$ tamely ramified and
$\rho(\tau)\sim J(\rb).$ We then have the following:

\begin{proposition} $\cD_{J(\rb)}$ determines a well-behaved
deformation condition for $\rb$.\label{tameprop}\end{proposition}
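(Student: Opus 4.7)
The plan is to describe type $\cD_{J(\rb)}$ deformations concretely via the matrix $S:=\rho(\sigma)$, verify the axioms (DC0)--(DC2), establish smoothness, and compute the tangent space. After conjugating $\rb$ we may assume $\rb(\tau)=J(\rb)\pmod{\ell}$. Then up to strict equivalence, a type $\cD_{J(\rb)}$ lift to a CNL $W(k)$-algebra $A$ is determined by an element $S_A\in GL_n(A)$ lifting $\rb(\sigma)$ and satisfying $S_AJ(\rb)S_A^{-1}=J(\rb)^q$. Write $X$ for the $W(k)$-functor $A\mapsto\{S\in GL_n(A):SJ(\rb)S^{-1}=J(\rb)^q\}$; once non-empty, $X$ is a torsor under the centralizer group scheme $Z:=Z_{GL_n}(J(\rb))$, so the proposition reduces to proving that $Z$ is formally smooth over $W(k)$ and that $X(W(k))\neq\emptyset$.

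For smoothness of $Z$, note that the eigenvalues of $\rb(\tau)$ in $\overline k$ are roots of unity of order prime to $\ell$, because the semisimple part of any element of $GL_n(k)$ has order coprime to $\ell$. Their Teichm\"uller lifts in $W(\overline k)$ therefore remain distinct whenever the eigenvalues themselves are, and hence the centralizer of $J(\rb)$ in $M_n(W(k))$ is a free $W(k)$-module of the same rank as its reduction mod $\ell$, giving formal smoothness of $Z$. Combined with conjugacy of $\rb(\tau)$ and $\rb(\tau)^q$ in $GL_n(k)$ via $\rb(\sigma)$, the same eigenvalue analysis shows that $J(\rb)$ and $J(\rb)^q$ share Jordan canonical form over $W(k)$, so $X(W(k))\neq\emptyset$. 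Axioms (DC0), (DC1) are immediate; for (DC2), given the Cartesian square with $\beta$ small and matrices $M_A\in GL_n(A)$, $M_B\in GL_n(B)$ conjugating the images of $\rho(\tau)$ to $J(\rb)$, the element $(\alpha M_A)(\beta M_B)^{-1}$ lies in $Z(C)$, and surjectivity of $Z(A)\to Z(C)$ lets us adjust $M_A$ so the two conjugators agree over $C$, giving a simultaneous conjugator in $GL_n(A\times_C B)$. Smoothness of $\cD_{J(\rb)}$ then follows: given a small $A\to B$ and a lift $\rho_B$ with $\rho_B(\tau)=J(\rb)$, lift $\rho_B(\sigma)\in X(B)$ to some $S_A\in X(A)$ via smoothness of $X$, and set $\rho_A(\sigma):=S_A$, $\rho_A(\tau):=J(\rb)$.

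Finally, for the tangent space, a first-order deformation has the form $(J(\rb),S_0(I+\epsilon U))$ with $S_0=\rb(\sigma)$, and the relation $SJ(\rb)S^{-1}=J(\rb)^q$ modulo $\epsilon^2$ forces $U$ to lie in $Z:=Z_{M_n(k)}(J(\rb))$. Strict equivalences by centralizing matrices $I+\epsilon Y$, $Y\in Z$, translate to the relation $U\sim U+\sigma\cdot Y-Y$ for the natural action of $\sigma$ on $Z\subset\Ad\rb$. Hence $T\cD_{J(\rb)}\cong Z/(\sigma-1)Z$, and rank-nullity applied to the $k$-linear endomorphism $\sigma-1$ of the finite-dimensional $k$-space $Z$ gives $\dim_k Z/(\sigma-1)Z=\dim_k Z^\sigma$; since $Z^\sigma$ is the simultaneous centralizer of $\rb(\tau)$ and $\rb(\sigma)$ in $M_n(k)$, this equals $\dim_k H^0(G_F,\Ad\rb)$. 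The main obstacle is formal smoothness of $Z$ over $W(k)$: beyond the separation of Teichm\"uller lifts of distinct eigenvalues (the semisimple case), one must verify that non-trivial Jordan blocks do not cause the centralizer rank to jump, which reduces block-by-block to the flatness of the rank-$r$ algebra of upper-triangular Toeplitz matrices centralizing $J_r(\widehat\lambda)$, with cross-block centralizers for blocks sharing a Teichm\"uller eigenvalue handled similarly.
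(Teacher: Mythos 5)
Your proof is correct in substance and, once the notation is unwound, is the paper's argument recast in matrix/group-scheme language rather than the paper's $R[T]$-module language: your matrix $S$ satisfying $SJ(\rb)S^{-1}=J(\rb)^q$ is exactly the paper's structure map $\theta\colon V(\rho)\to\phi_q^*V(\rho)$, your flatness of the centralizer $Z$ of $J(\rb)$ is Lemma~\ref{lifting1}/Proposition~\ref{lifting2} (homomorphisms of standard $R[T]$-modules lift), and your assertion that $J(\rb)$ and $J(\rb)^q$ are conjugate over the Witt vectors is Proposition~\ref{qfrob}. What your packaging buys is an efficient deduction of smoothness (a torsor under a formally smooth group scheme with a $k$-point is formally smooth) and a transparent tangent-space computation $Z/(\sigma-1)Z\cong Z^{\sigma}$, which is the simultaneous centralizer of $\rb(\tau)$ and $\rb(\sigma)$ and hence has dimension $\dim H^0(G_F,\Ad\rb)$; this agrees with the paper's identification of the tangent space with $H^1_{\textrm{nr}}(F,\Ad\rb)$.

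Three small repairs are needed. (i) In (DC2) only $\beta$ is assumed surjective, so you must adjust $M_B$ using surjectivity of $Z(B)\to Z(C)$ (available because $\beta$ is small and $Z$ is formally smooth), not $M_A$ via $Z(A)\to Z(C)$, since $\alpha$ need not be surjective. (ii) The Jordan form of $\rb(\tau)$, and hence $J(\rb)$ and its conjugacy with $J(\rb)^q$, a priori live only over an extension $k'$ of $k$ containing the eigenvalues; this still trivializes the torsor $X$ over the finite \'etale extension $W(k')$, so $X$ is formally smooth over $W(k)$ and its $k$-point $\rb(\sigma)$ lifts, but you should say this (the paper sidesteps the issue by working with the orbit polynomials $P_{\mathbf{x}}$). (iii) The deferred block-by-block computation is the entire content of the flatness of $Z$ and should be written out: cross-block intertwiners vanish over any CNL algebra when the Teichm\"uller eigenvalues differ (their difference being a unit), and form a free module of rank $\min(r,s)$ when they coincide, so the centralizer is free of the expected rank and commutes with base change. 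With these points addressed, the argument is complete and equivalent to the paper's.
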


We'd like to study deformations $(R,\rho)$ in $\cD_{J(\rb)}$ using the linear algebra
data `$R[T]$-module with added structure', and for that we need a convenient
description of $J(\rb)$ in terms of $R[T]$-modules.

Recall that $k$ is a finite of characteristic $\ell\neq p$. We denote by $k_{(q)}$ the orbits of the action $\alpha\longrightarrow \alpha^q$ on the set of
elements in $k^\times$ which have order prime to $q$. For $\alpha\in k^\times$ with order
prime to $q$ we define the polynomial
\[
P_\alpha(T):=
\big(T-\widehat\alpha\big)\big(T-\widehat\alpha^q\big)\cdots
\big(T-\widehat\alpha^{q^d}\big)
\]
where $d$ is the smallest non-negative integer with $\alpha^{q^{d+1}}=\alpha$.
As usual, $\widehat{\alpha}\in W$ denotes the Teichm\"{u}ller lift of
 $\alpha\in k$. Equivalently, $P_\alpha$ is the polynomial whose roots are the
 Teichm\"{u}ller lifts of elements in the orbit of $\alpha$.
Finally, if $\mathbf{x}\in k_{(q)}$ is the orbit of $\alpha$ then
$P_\mathbf{x}:=P_\alpha$.

\begin{definition} \mbox{}
\begin{enumerate}
\item A type function $\T$ is a map  $\T:k_{(q)}\times \mathbb{N}\longrightarrow \Z$
such that
\begin{itemize}
\item $\T(\mathbf{x},m)\geq \T(\mathbf{x},m+1)$ for all $\mathbf{x}\in k_{(q)}$,
$m\in \mathbb{N}$, and
\item $\T(\mathbf{x},m)=0$ for almost all $\mathbf{x},$ $m$.
\end{itemize}
\item
 Let $R$ be a CNL $W$-algebra, and let $\T$ be a type function. The
 \emph{standard  $R[T]$ module of type $\T$}, denoted by $J(R,\T)$, is
\[
\bigoplus_{\mathbf{x}\in k_{(q)}}\left(
\frac{R[T]}{\left(P_{\mathbf{x}}^{\T(\mathbf{x},1)}\right)}\oplus
\frac{R[T]}{\left(P_{\mathbf{x}}^{\T(\mathbf{x},2)}\right)}
 \oplus\cdots\right).
\]
An $R[T]$ module $M$ is said to be of type $\T$  if $M$ is isomorphic to $J(R,\T)$.
A tamely ramified representation $\rho:G_F\longrightarrow GL_n(R)$ is said to be of
type $\T$ if
the underlying module $V(\rho)$  is of type $\T$.
\end{enumerate}
\end{definition}

We make the following observation. Let $\rb: G_F\longrightarrow GL_n(k)$ be a tamely
ramified representation. Because $\sigma\tau\sigma^{-1}=\tau^q$, the uniqueness of
Jordan normal form implies that $V(\rb)$ is a $k[T]$-module of type $\T$ for some type
function $\T$. Fix one such type function $\T$. Then $(A,\rho)$ is in $\cD_{J(\rb)}$ if
and only if $\rho$ is of type $\T$.

We now establish some results that will be needed in the proof of our key proposition
\ref{tameprop}.

\begin{lemma} Let $\alpha,\beta\in k^\times$ have orders prime to
$q$ and let $f:R\longrightarrow S$ be a surjective homomorphism of Artinian CNL algebras. Given
$m,n\geq 1$ and $\phi\in\Hom_{S[T]}\left(S[T]/(P_\alpha^m),S[T]/(P_\beta^n)\right),$ there exists
$\widetilde\phi\in\Hom_{R[T]}\left(R[T]/(P_\alpha^m),R[T]/(P_\beta^n)\right)$ such that the
diagram
\[
\begin{matrix}
R[T]/(P_\alpha^m)&\stackrel{\widetilde\phi}{\longrightarrow}&R[T]/(P_\beta^n)\\
\downarrow&&\downarrow\\
S[T]/(P_\alpha^m)&\stackrel{\phi}{\longrightarrow}&S[T]/(P_\beta^n)\end{matrix}
\]
 commutes.
\label{lifting1}\end{lemma}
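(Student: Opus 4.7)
The plan is to reduce the lifting of $\phi$ to lifting a single element: an $R[T]$-module homomorphism $R[T]/(P_\alpha^m)\to R[T]/(P_\beta^n)$ is determined by the image of $1$, and any element of $R[T]/(P_\beta^n)$ annihilated by $P_\alpha^m$ is a legitimate choice. So the claim reduces to the following: given $g\in S[T]/(P_\beta^n)$ with $P_\alpha^m g=0$, find $\widetilde g\in R[T]/(P_\beta^n)$ reducing to $g$ and satisfying $P_\alpha^m\widetilde g=0$. I will split on whether $\alpha$ and $\beta$ lie in the same $\langle x\mapsto x^q\rangle$-orbit in $k^\times$.

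If they do not, then $\overline{P_\alpha}$ and $\overline{P_\beta}$ have disjoint root sets in $\overline k$, hence are coprime in $k[T]$; so $\overline{P_\alpha}^m$ is a unit in $k[T]/(\overline{P_\beta}^n)$. Because $R[T]/(P_\beta^n)$ is a finite $R$-algebra over the complete local ring $R$, hence semi-local with every maximal ideal lying over $\mathfrak m_R$, any element whose reduction mod $\mathfrak m_R$ is a unit is itself a unit; thus $P_\alpha^m$ is a unit in $R[T]/(P_\beta^n)$. This forces $g=0$, so $\widetilde g=0$ works (equivalently, $\widetilde\phi=0$).

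If $\alpha$ and $\beta$ lie in the same orbit, then by the defining orbit description $P:=P_\alpha=P_\beta$. Since $P$ is monic, multiplication by $P$ is injective on $R[T]$ (and on $S[T]$); a direct calculation then shows
\[
\mathrm{Ann}_{R[T]/(P^n)}(P^m)=(P^{\max(n-m,0)})/(P^n),
\]
and likewise over $S$. If $m\geq n$ any lift of $g$ works. If $m<n$, write $g=P^{n-m}h$ with $h\in S[T]$; lift $h$ to $\widetilde h\in R[T]$ via the surjection $R[T]\twoheadrightarrow S[T]$, and set $\widetilde g:=P^{n-m}\widetilde h\bmod P^n$. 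Then $\widetilde g$ reduces to $g$, and $P^m\widetilde g\equiv P^n\widetilde h\equiv 0\pmod{P^n}$. The main obstacle, if any, is just the bookkeeping: verifying that coprimality propagates from $k[T]$ to $R[T]/(P_\beta^n)$ in the disjoint-orbit case, and computing the annihilator in the same-orbit case. Both follow cleanly from $R$ being complete local and $P$ being monic.
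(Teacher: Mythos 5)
Your proof is correct and follows essentially the same route as the paper's: split on whether $\alpha$ and $\beta$ lie in the same $q$-power orbit, observe that the Hom group vanishes in the disjoint case, and in the same-orbit case lift the image of $1$, writing $g=P^{n-m}h$ when $m<n$. You merely make explicit two points the paper leaves implicit (that $P_\alpha^m$ is a unit in $R[T]/(P_\beta^n)$ in the disjoint case, and the annihilator computation justifying the factorization $g=P^{n-m}h$), which is fine.
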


\begin{proof}  The lemma holds trivially if $\alpha\neq \beta^{q^j}$ for any $j\geq 0$
because
\[
\Hom_{R[T]}\left(
R[T]/(P_\alpha^m),R[T]/(P_\beta^n)\right)=(0)
\]
 in this case.

Suppose now that $\alpha=\beta.$ To give an $S[T]$-module homomorphism
$\phi:S[T]/(P_\alpha^m)\longrightarrow S[T]/(P_\alpha^n)$ is equivalent to finding a $g(T)\in
S[T]$ such that $P_\alpha^mg(T)\in (P_\alpha^n)$ (and  $\phi(1)= g(T)\pmod{P_\alpha^n} $). If
$m\geq n,$ take $\widetilde g(T)\in R[T]$ to be a lift of $g(T),$ and define
\[
\widetilde\phi:R[T]/(P_\alpha^m)\longrightarrow
R[T]/(P_\alpha^n)
\]
 by setting $\widetilde\phi(1)=\widetilde g(T)\pmod{P_\alpha^n}.$  If $m<n,$
we have $g(T)=P_\alpha^{n-m} h(T)$ for some $h(T)\in S[T].$ In this case, define
\[
\widetilde\phi(1):=P_\alpha^{n-m} \widetilde h(T)\pmod{P_\alpha^n}
\]
where $\widetilde h(T)\in R[T]$ is a lift of $h(T).$
\end{proof}

\begin{proposition} Let $R$ be an  Artinian CNL algebra, and let
$I$ be an ideal of $R.$ If $M,N$ are  $R[T]$-modules of type $\T_M,\T_N$ respectively, then any
$R[T]$-module homomorphism $M/IM\longrightarrow N/IN$ lifts to a homomorphism $M\longrightarrow
N.$\label{lifting2}\end{proposition}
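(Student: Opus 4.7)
The plan is to reduce the statement to the cyclic case already handled by Lemma \ref{lifting1}, using the fact that being of type $\T$ means the underlying module is a (finite) direct sum of standard cyclic blocks $R[T]/(P_{\mathbf{x}}^k)$.

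First, I would fix isomorphisms
\[
M\cong \bigoplus_{i} R[T]/(P_{\alpha_i}^{m_i}),\qquad N\cong \bigoplus_{j} R[T]/(P_{\beta_j}^{n_j}),
\]
where the index sets are finite because $\T_M$ and $\T_N$ vanish almost everywhere. Since each summand is $R$-free, reduction modulo $I$ commutes with the direct sum and with taking the quotient by $P_{\mathbf{x}}^k$, giving canonical identifications
\[
M/IM\cong \bigoplus_{i} (R/I)[T]/(P_{\alpha_i}^{m_i}),\qquad N/IN\cong \bigoplus_{j} (R/I)[T]/(P_{\beta_j}^{n_j}).
\]
Hence any $(R/I)[T]$-homomorphism $\phi:M/IM\longrightarrow N/IN$ is described by a finite matrix of component maps
\[
\phi_{ji}:(R/I)[T]/(P_{\alpha_i}^{m_i})\longrightarrow (R/I)[T]/(P_{\beta_j}^{n_j}).
\]

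Next, I would apply Lemma \ref{lifting1} with $S=R/I$ to each component individually. For each pair $(i,j)$, the lemma produces a lift
\[
\widetilde{\phi}_{ji}: R[T]/(P_{\alpha_i}^{m_i})\longrightarrow R[T]/(P_{\beta_j}^{n_j})
\]
whose reduction mod $I$ equals $\phi_{ji}$. (When $\alpha_i$ is not a $q$-power conjugate of $\beta_j$, the corresponding Hom group is zero and there is nothing to lift.) Assembling the matrix $(\widetilde{\phi}_{ji})$ yields an $R[T]$-homomorphism $\widetilde{\phi}:M\longrightarrow N$; its reduction mod $I$ agrees block-by-block with $\phi$, and so equals $\phi$ under the chosen identifications.

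There isn't really a substantial obstacle here: once the structure theorem baked into the definition of ``type $\T$'' lets us decompose $\Hom_{R[T]}(M,N)$ as a finite direct sum of Hom groups between cyclic blocks, the proposition is a purely formal consequence of Lemma \ref{lifting1}. The only point that requires care is checking that reduction mod $I$ is compatible with the chosen direct sum decompositions, which follows from the $R$-flatness of each $R[T]/(P_{\mathbf{x}}^k)$ (they are free $R$-modules of finite rank since $P_{\mathbf{x}}$ is monic).
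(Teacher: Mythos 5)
Your proof is correct and follows essentially the same route as the paper: fix isomorphisms of $M$ and $N$ with the standard direct sums of cyclic blocks, reduce modulo $I$, apply Lemma~\ref{lifting1} componentwise (with $S=R/I$), and reassemble. The only difference is that you spell out the compatibility of reduction mod $I$ with the block decomposition via $R$-freeness of each $R[T]/(P_{\mathbf{x}}^k)$, a point the paper leaves implicit.
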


\begin{proof} Fix isomorphisms
\[
\theta_M:M\longrightarrow \bigoplus
\frac{R[T]}{P_\alpha^{\T_M(\alpha,i)}},\ \theta_N:N\longrightarrow \bigoplus
\frac{R[T]}{P_\alpha^{\T_N(\alpha, i)}},
\]
 and let $\overline\theta_M,\overline\theta_N$ be their
reductions modulo $I.$ Given a homomorphism of $R[T]$-modules $\overline\phi: M/IM\longrightarrow
N/IN,$ we can apply Lemma~\ref{lifting1} to find a lift
\[
\psi:\bigoplus
\frac{R[T]}{P_\alpha^{\T_M(\alpha,i)}}\longrightarrow\bigoplus
\frac{R[T]}{P_\alpha^{\T_N(\alpha,i)}}
\]
  of
$\bar\theta_N{\bar\phi}{\bar\theta_M}^{-1}.$ If we now take
$\phi:M\longrightarrow
N$ to be $\theta_N^{-1}\psi \theta_M,$ then $\phi\pmod{I}=\overline\phi.$ \end{proof}

\begin{proposition} Let $R$ be a CNL $W$-algebra.  Let
$\phi_q :R[T]\longrightarrow R[T]$ be the injective homomorphism sending $T$ to $T^q.$ Then
$\phi_q$ induces an isomorphism
\[
\frac{R[T]}{P_\alpha^n}\longrightarrow
\frac{R[T]}{P_\alpha^n}
\]
  of $R$ algebras for any $\alpha\in k^\times$ of order coprime to $q$, $n\geq 1.$

 Consequently, if  $M$ is an
$R[T]$-module of type $\T$ then $\phi_q^*M$ is also of type $\T.$

\label{qfrob}\end{proposition}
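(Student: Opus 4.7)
The plan is to verify that $\phi_q$ descends to a well-defined $R$-algebra endomorphism of $R[T]/(P_\alpha^n)$, establish that this endomorphism is an isomorphism by decomposing via the Chinese Remainder Theorem and computing on each local factor, and then deduce the statement about modules of type $\T$ formally.

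For the descent step, I would show $P_\alpha(T) \mid P_\alpha(T^q)$ in $W[T]$. Multiplicativity of the Teichm\"uller lift gives $\widehat{\alpha}^{q^{d+1}} = \widehat{\alpha^{q^{d+1}}} = \widehat{\alpha}$, so each root $\widehat{\alpha}^{q^j}$ of $P_\alpha$ satisfies $P_\alpha\bigl((\widehat{\alpha}^{q^j})^q\bigr) = P_\alpha(\widehat{\alpha}^{q^{j+1}}) = 0$ and is therefore a root of $P_\alpha(T^q)$. The minimality of $d$ forces the roots $\widehat{\alpha}^{q^j}$ to be pairwise distinct modulo $\ell$, so the differences are units in $W$; hence the linear factors of $P_\alpha$ are pairwise coprime in $W[T]$, which yields $P_\alpha \mid P_\alpha(T^q)$ and so $\phi_q\bigl((P_\alpha^n)\bigr) \subseteq (P_\alpha^n)$. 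The induced $R$-algebra map $\phi_q : R[T]/(P_\alpha^n) \to R[T]/(P_\alpha^n)$ is thus a well-defined endomorphism of a free $R$-module of rank $n(d+1)$.

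For the isomorphism claim, I would apply CRT over $R$ to get $R[T]/(P_\alpha^n) \cong \prod_{j=0}^{d} A_j$ with $A_j := R[T]/\bigl((T-\widehat{\alpha}^{q^j})^n\bigr)$; this works over $R$ (not merely over $k$) precisely because the differences of the Teichm\"uller roots are units in $W\subseteq R$. A direct calculation gives $\phi_q^{-1}\bigl((T-\widehat{\alpha}^{q^j})\bigr) = (T-\widehat{\alpha}^{q^{j+1}})$, so $\phi_q$ sends the primitive idempotent $e_j$ of $A_j$ to $e_{j-1}$ and restricts to a map $A_j \to A_{j-1}$. Introducing the local coordinate $S_i := T - \widehat{\alpha}^{q^i}$ and expanding in $A_{j-1}$,
\[
\phi_q(S_j) = \bigl(\widehat{\alpha}^{q^{j-1}}+S_{j-1}\bigr)^q - \widehat{\alpha}^{q^j} = q\bigl(\widehat{\alpha}^{q^{j-1}}\bigr)^{q-1} S_{j-1} + O(S_{j-1}^2) = u\,S_{j-1},
\]
with $u \in A_{j-1}^\times$, since $q = p^f$ is a unit in $R$ (using $p\neq \ell$) and $\widehat{\alpha}$ is a unit. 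In the bases $\{S_j^k\}_{k=0}^{n-1}$ and $\{S_{j-1}^i\}_{i=0}^{n-1}$ the restriction $\phi_q\vert_{A_j} : A_j \to A_{j-1}$ has a lower triangular matrix with diagonal entries $(q(\widehat{\alpha}^{q^{j-1}})^{q-1})^k$, all units, so it is an isomorphism; assembling the $A_j$'s yields that $\phi_q$ is an isomorphism on $R[T]/(P_\alpha^n)$.

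The consequence for modules of type $\T$ is then immediate: the map just constructed, viewed as going from $R[T]/(P_\alpha^n)$ with its standard $R[T]$-action to the same underlying module equipped with the $\phi_q$-twisted action, is tautologically $R[T]$-linear and bijective, so it provides an isomorphism $R[T]/(P_\alpha^n) \cong \phi_q^*\bigl(R[T]/(P_\alpha^n)\bigr)$ of $R[T]$-modules. Taking direct sums gives $J(R,\T) \cong \phi_q^* J(R,\T)$, and hence $\phi_q^* M$ is of type $\T$ whenever $M$ is. The main technical content lies in the local binomial computation of the third paragraph; morally, the proof is the observation that the formal derivative $qT^{q-1}$ of $T^q$ is a unit in each local factor $A_j$, so that the map $T\mapsto T^q$ is \'etale on each factor, which is precisely where the hypothesis $p\neq\ell$ is used.
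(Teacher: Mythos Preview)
Your proof is correct and takes a genuinely different route from the paper's. The paper first reduces to the Artinian case and argues injectivity directly: given $f(T^q)\in (P_\alpha^n)$, it uses the explicit factorisation
\[
P_\alpha(T^q)=P_\alpha(T)\prod_{\zeta^q=1,\ \zeta\neq 1}P_\alpha(\zeta T)
\]
together with an induction on $n$ to conclude $f(T)\in(P_\alpha^n)$; surjectivity then comes for free because an injective $R$-linear endomorphism of a finite-length $R$-module is bijective, and the general CNL case follows by passing to inverse limits. Your argument instead decomposes $R[T]/(P_\alpha^n)$ via the Chinese Remainder Theorem into the local pieces $A_j=R[T]/\bigl((T-\widehat\alpha^{q^j})^n\bigr)$, identifies $\phi_q$ as cyclically permuting these factors, and verifies that each induced map $A_j\to A_{j-1}$ is an isomorphism by the single binomial computation $\phi_q(S_j)=(\text{unit})\cdot S_{j-1}$, which uses crucially that $q\in R^\times$. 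This avoids the Artinian reduction and inverse-limit step altogether and makes the role of the hypothesis $p\neq\ell$ completely transparent (it is exactly the \'etaleness of $T\mapsto T^q$ at each Teichm\"uller root). The paper's approach, by contrast, never invokes CRT and gives the marginally sharper intermediate fact that $\phi_q$ is injective before one knows it is surjective; but for the proposition as stated your method is at least as efficient.
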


\begin{proof} First suppose that $R$ is Artinian.
Suppose we have a polynomial $f(T)\in R[T]$ with
\[
f(T^q)=P_\alpha(T)^n g(T)
\]
for some $g(T)\in R[T].$ Then
\[
f(\widehat\alpha)=f(\widehat\alpha^q)=\cdots=0.
\]
 Since
$\widehat\alpha^{q^i}-\widehat\alpha^{q^j}$ is a unit if $0\leq i<j<d_\alpha,$ we have
$f(T)=P_\alpha h(T)$ for some $h(T)\in R[T].$ Now
\begin{align*}
P_\alpha(T^q)&=\prod_{i=0}^{d_\alpha-1}\left(T^q-\widehat\alpha^{{q^i}q}\right)\\
&= P_\alpha(T)\prod_{\zeta^q=1 \atop \zeta\neq 1}
\prod_{i=0}^{d_\alpha-1}\left(T-\zeta\widehat\alpha^{{q^i}}\right)\\
&=P_\alpha(T)\prod_{\zeta^q=1\atop\zeta\neq 1}P_\alpha(\zeta T),
\end{align*}
and therefore
\[
P_\alpha(T)^{n-1} g(T)=h(T^q)\prod_{\zeta^q=1\atop\zeta\neq 1}P_\alpha(\zeta
T).
\]
 Since $\widehat\alpha^{q^j}-\zeta\widehat\alpha^{{q^i}}$ are units, we have
\[
h(\widehat\alpha)=h(\widehat\alpha^{q^2})=\cdots=0.
\]
We can now conclude (by induction) that $\phi_q$ induces an injection
\[
\frac{R[T]}{P_\alpha^n}\longrightarrow
\frac{R[T]}{P_\alpha^n},
\]
  and therefore induces an isomorphism.

The non-Artinian case follows on taking inverse limits.
\end{proof}

\begin{proof}[Proof of Proposition \ref{tameprop}] To show that $\cD_{J(\rb)}$ determines a
deformation condition, we
 need only verify condition (DC2) as  (DC1) is obvious.
Fix a type function $\T$ so that $\rb$ is of type $\T$. Let
\[
\begin{CD}
A\times_CB @>{\pi_B}>> B\\
@V{\pi_A}VV @V{\beta}VV\\
A@>{\alpha}>>C
\end{CD}
\]
 be a cartesian diagram of Artinian local $W$-algebras with
$\beta$ small, and let $(A\times_CB,\rho)$ be an object in $\rep_N(G_F;k)$ with $\pi_A\rho,
\pi_B\rho_B$ of type $\T$. We need to show that $\rho$ is of type $\T.$

Let $(t)$ be the kernel of $\beta.$ Then $\pi_A$ is small with kernel generated by $(0,t).$ We
may suppose that $\pi_A\rho|_{I_F}=\rho_{\T}$, and so
$\rho|_{I_F}=(I+(0,t\xi))\rho_{\T}$ with $\xi$
a $1$-cocycle representing an element of $H^1(I_F,\Ad\rb).$ We need to show that $\xi$
is trivial.

Now $\pi_B\rho|_{I_F}=(I+t\xi)\rho_{\T}$, and also
$M\pi_B\rho M^{-1}|_{I_F}=\rho_{\T}$ for some
$M\in GL_N(B)$. Going down to $C=B/(t),$ we have that $\beta(M)$ commutes with
$\rho_{\T}$. Using
Proposition \ref{lifting2}, we can find $M'\in GL_N(B)$ such that
$M'\rho_{\T}{M'}^{-1}=\rho_{\T}$ and
$M\equiv M'\pmod{t}.$ Thus $\rho_{\T}=(I+tX)\rho(I-tX)|_{I_F}$ for some $N\times N$
matrix over
$k$, and hence $\xi$ is trivial.

Let $R\longrightarrow S$ be a surjective morphism of Artinian local $W$-algebras, and let
$\rho_S:G_F\longrightarrow GL_N(S)$ be a deformation of type $\T$. Conjugating $\rho_S$ by a
matrix congruent to the identity modulo the maximal ideal of $S$, we may suppose that $V(\rho_S)$
is $J(S,\T)$. The action of $\sigma$ specifies a morphism
\[
\theta_S:J(S,\T)
\longrightarrow \phi_q^*J(S,\T)
\]
 of $S[T]$-modules  which can then be lifted, by
Proposition \ref{lifting2}, to
\[
\theta_R:J(R,\T)
\longrightarrow \phi_q^*J(R,\T).
\]
 Hence $\cD_{\T}$ is smooth.

The deformations of $\rb$ to $k[\epsilon]/\epsilon^2$ are uniquely determined by
$H^1(G_F,\Ad\rb).$ For $\xi\in H^1(G_F,\Ad\rb),$ the lift
$(I+\epsilon\xi)\rb$ is of type $\T$ if and only if the restriction of $\xi$ to inertia
is trivial. Hence the tangent space for $\cD_{J(\rb)}$ is
$H^1\left(G_F/I_F,(\textrm{ad}\rb)^{I_F}\right).$ Hence $\cD_{J(\rb)}$ is a well
behaved deformation condition.
\end{proof}

\subsection{Deformations for tensor products} \label{tensor prod}

We now consider the problem of constructing well-behaved deformations using tensor products. As in the preceding sections, $F$ is a finite extension of $\Q_p$ and $k$ is finite field of
characteristic $\ell, \ell\neq p.$ Fix a residual representation
$\overline\theta:G_F\longrightarrow GL_n(k)$ such that
\begin{itemize}
\item $\overline\theta$ is  absolutely irreducible,
\item $\ell\nmid n,$ and
\item $\overline\theta$ is not equivalent to its Tate twist $\overline\theta(1).$
 \end{itemize}
We set $s$ to be the smallest positive integer such that
$\overline\theta(s)\sim\overline\theta.$ (So $s\geq 2$ by our assumption.) We then have the following.

\begin{theorem} \label{tensortheorem}
Suppose that $1\leq m\leq s-2,$ and let $\rb:G_F\longrightarrow GL_{mn}(k)$ be
a representation such that $\rb^\textrm{ss}\cong
\overline\theta(a_1)\oplus\cdots\oplus\overline\theta(a_m).$ There is then a deformation
condition $\mathcal{E}$ for $\rb$ with the following properties:
\begin{itemize}
\item If $(A,\rho_A)\in\mathcal{E},$ then $\det\rho_A$ restricted to the inertia subgroup
of $G_F$ is the Teichm\"{u}ller lift of $\det\rb ;$ \item $\mathcal{E}$ is a
smooth deformation condition;
\item The dimension of the tangent space for $\mathcal{E}$ is equal
to $\dim H^0(G_F,\Ad\rb).$
\end{itemize}
\end{theorem}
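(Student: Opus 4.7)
The plan is to reduce the deformation problem for $\rb$ to that of an auxiliary tame representation $\overline\psi:G_F\longrightarrow GL_m(k)$ via a tensor decomposition $\rb\cong\overline\theta\otimes\overline\psi$. The candidate $\cE$ will consist of deformations of the form $\theta_A\otimes\psi_A$, where $\theta$ is a fixed characteristic-zero lift of $\overline\theta$ and $\psi_A$ ranges over a well-behaved tame deformation condition for $\overline\psi$ supplied by Proposition~\ref{tameprop}.

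First I would construct the lift $\theta:G_F\longrightarrow GL_n(W)$. Since $\overline\theta$ is absolutely irreducible and $\overline\theta\not\sim\overline\theta(1)$, we have $H^0(G_F,\Ad^0\overline\theta)=H^0(G_F,\Ad^0\overline\theta(1))=0$; local duality then gives $H^2(G_F,\Ad^0\overline\theta)=0$, and the local Euler characteristic formula (using $\ell\neq p$) forces $H^1(G_F,\Ad^0\overline\theta)=0$. Combined with $\ell\nmid n$ and the splitting $\Ad\overline\theta=\Ad^0\overline\theta\oplus k$, this shows the fixed-determinant deformation ring of $\overline\theta$ with determinant $\widehat{\det\overline\theta}$ is simply $W$. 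Let $\theta$ be the resulting lift, and for each CNL $W$-algebra $A$ write $\theta_A$ for its pushforward along $W\longrightarrow A$.

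Second I would establish the factorisation $\rb\cong\overline\theta\otimes\overline\psi$ for some $\overline\psi$ with $\overline\psi^{\textrm{ss}}\cong\overline\omega^{a_1}\oplus\cdots\oplus\overline\omega^{a_m}$. The extensions building $\rb$ out of its composition factors $\overline\theta(a_i)$ live in $H^1(G_F,\Ad\overline\theta(a_i-a_j))=H^1(G_F,\Ad^0\overline\theta(a_i-a_j))\oplus H^1(G_F,\overline\omega^{a_i-a_j})$, and tensor-product decompositions $\overline\theta\otimes\overline\psi$ precisely reach the second summand. The hypothesis $1\leq m\leq s-2$ together with the minimality of $s$ should force the first summand to vanish for all relevant differences $a_i-a_j$: local duality identifies the relevant $H^2$'s with $H^0$'s of further twists of $\Ad^0\overline\theta$, which the minimality of $s$ combined with $\overline\theta\not\sim\overline\theta(1)$ rules out within the range allowed by $m\leq s-2$, after which the local Euler characteristic formula yields the desired $H^1$-vanishing. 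Consequently $\rb$ factors as claimed, and $\overline\psi$ is tamely ramified since each $\overline\omega^{a_i}$ is tame.

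Finally, Proposition~\ref{tameprop} produces a well-behaved tame deformation condition $\cF$ for $\overline\psi$. Define $\cE$ as the full subcategory of $\rep_{mn}(G_F;k)$ consisting of $(A,\rho_A)$ with $\rho_A\sim\theta_A\otimes\psi_A$ for some $(A,\psi_A)\in\cF$. Axioms (DC0), (DC1), and the only-if part of (DC2) are immediate, while the if-direction of (DC2) is an analogue of Proposition~\ref{H1}: the Step~2 cohomological vanishing guarantees that the tensor factorisation is unique up to strict equivalence, so the $\cF$-data glues across fibre products. Smoothness of $\cE$ reduces to smoothness of $\cF$ because $\theta$ extends trivially along any surjection; the tangent space uses $\Ad(\theta\otimes\psi)=\Ad\psi\oplus(\Ad^0\theta\otimes\Ad\psi)$ together with $\bigoplus_{i,j}H^1(G_F,\Ad^0\overline\theta(a_i-a_j))=0$ to conclude $T\cE=T\cF$, of dimension $\dim H^0(G_F,\Ad\overline\psi)=\dim H^0(G_F,\Ad\rb)$. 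The inertial determinant condition follows from $\det(\theta_A\otimes\psi_A)=(\det\theta_A)^m(\det\psi_A)^n$, using that $\det\theta$ is Teichm\"uller by construction and $\det\psi_A|_{I_F}$ is Teichm\"uller by tameness of $\cF$. The main obstacle is Step~2: making the cohomological vanishing argument precise and isolating the structural role of the bound $m\leq s-2$.
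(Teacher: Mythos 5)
Your proposal is correct and follows essentially the same route as the paper: normalise the twists so that $0\leq a_i\leq s-2$, use the vanishing of $H^i\left(G_F,\Ad^0\overline\theta(j)\right)$ for $|j|\leq s-2$ (the paper's Lemma~\ref{tensorlemma}) to factor $\rb$ as $\overline\theta$ tensored with an $s$-small tame representation, lift $\overline\theta$ uniquely to $W$ with Teichm\"uller determinant, and tensor that lift with the well-behaved tame condition of Proposition~\ref{tameprop}. The step you flag as the main obstacle is exactly the content of Lemma~\ref{tensorlemma} and Proposition~\ref{tensorprop} in the paper, and your sketch of it (Schur plus local duality plus the Euler characteristic formula) is the argument given there.
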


We make the following definition for convenience: A
representation $r:G_F\longrightarrow GL_d(k)$ is said to be \emph{$s$-small} if
\[
r^\textrm{ss}\cong k(i_1)\oplus \cdots\oplus k(i_d)
\]
with $0\leq i_1,\ldots , i_m\leq s-2.$

We shall make use of the natural isomorphism between $\Hom(V,W)$ and $ V^\vee\otimes W$ for
$k$-vector spaces $V,W$ in what follows without any further qualification. Also, the identity map
on $U$ naturally identifies $\Hom(V,W)$ as a subspace of $\Hom(V\otimes U,W\otimes U).$ If
$\ell\nmid \dim U,$ then $\Hom(V\otimes U,W\otimes U)$ is naturally identified with
$\Hom(V,W)\oplus \Hom(V,W)\otimes \Ad^0U$ where $\Ad^0U$ is the vector space of trace zero
endomorphisms of $U.$

\begin{lemma}\label{tensorlemma}
\mbox{}
\begin{enumerate}
\item[(a)] If $|j|\leq s-2$ then $H^i\left(G_F,\Ad^0\overline\theta(j)\right)=(0)$ for all $i\geq
0.$ \item[(b)] If $0\leq a,b\leq s-2$ then the decomposition described above induces natural
isomorphisms
$$H^i\left(G_F,\Hom\left(\overline\theta(a),\overline\theta(b)\right)\right)\cong
H^i\left(G_F,k(b-a)\right)$$ for all $i\geq 0.$ \item[(c)] If $\rb_1,\rb_2$
are two $s$-small representations then the natural inclusion
$\Hom\left(\rb_1,\rb_2\right)\hookrightarrow
\Hom\left(\rb_1\otimes\overline\theta,\rb_2\otimes\overline\theta \right)$
induces isomorphisms
\[
H^i\left(G_F,\Hom\left(\rb_1\otimes\overline\theta,\rb_2\otimes\overline\theta
\right)\right)\cong H^i\left(G_F,\Hom\left(\rb_1,\rb_2 \right)\right)
\]
 for all $i\geq 0.$
\end{enumerate}
\end{lemma}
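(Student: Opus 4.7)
The plan is to reduce everything to part (a), which provides the core vanishing statement. For (a), once $H^0$ and $H^2$ vanish, $H^1$ vanishes by the local Euler--Poincar\'e characteristic formula: for a finite $\ell$-primary $G_F$-module $M$ with $\ell\neq p$, one has $\chi(G_F,M)=1$, so $|H^0||H^2|=|H^1|$. Parts (b) and (c) then follow by decomposing the relevant $\Hom$ modules into a ``trivial'' piece plus twists of $\Ad^0\overline\theta$ that are killed by (a).

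For part (a), the hypothesis $\ell\nmid n$ lets the trace map split $\Ad\overline\theta=\Ad^0\overline\theta\oplus k$ as $G_F$-modules, so $H^0(G_F,(\Ad^0\overline\theta)(j))$ is a direct summand of $\Hom_{G_F}(\overline\theta,\overline\theta(j))$. By the minimality of $s$, for $1\leq |j|\leq s-2$ the representations $\overline\theta(j)$ and $\overline\theta$ are non-isomorphic, so this $\Hom$ space vanishes; for $j=0$, Schur's lemma plus absolute irreducibility yield $\Hom_{G_F}(\overline\theta,\overline\theta)=k$, which is entirely absorbed by the $k(0)$ summand, leaving the trace-zero part zero. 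Local duality identifies $H^2(G_F,(\Ad^0\overline\theta)(j))^\vee$ with $H^0(G_F,(\Ad^0\overline\theta)(1-j))$, using self-duality of $\Ad^0\overline\theta$ under the trace pairing. For $|j|\leq s-2$ the shift $1-j$ ranges over $[3-s,s-1]$, and its only multiple of $s$ is $0$, occurring at $j=1$, which again reduces to the Schur case. All other values give $\overline\theta(1-j)\not\cong\overline\theta$, hence $H^0=0$.

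Part (b) is then immediate: $\Hom(\overline\theta(a),\overline\theta(b))\cong (\Ad\overline\theta)(b-a)=(\Ad^0\overline\theta)(b-a)\oplus k(b-a)$, and since $|b-a|\leq s-2$, part (a) kills the first summand. For part (c), the identification already recorded in the text (with $U=\overline\theta$ and $\ell\nmid n$) gives
\[
\Hom(\overline\rho_1\otimes\overline\theta,\overline\rho_2\otimes\overline\theta)\cong\Hom(\overline\rho_1,\overline\rho_2)\oplus\Hom(\overline\rho_1,\overline\rho_2)\otimes\Ad^0\overline\theta,
\]
so it remains to show the cohomology of the second summand vanishes. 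The $s$-smallness assumption furnishes $G_F$-stable filtrations of $\overline\rho_1$ and $\overline\rho_2$ with graded pieces $k(a_i)$ and $k(b_j)$ for $0\leq a_i,b_j\leq s-2$; combining them yields a filtration of $\Hom(\overline\rho_1,\overline\rho_2)$ with successive quotients $k(b_j-a_i)$ with $|b_j-a_i|\leq s-2$. Tensoring with $\Ad^0\overline\theta$ gives a filtration whose graded pieces $(\Ad^0\overline\theta)(b_j-a_i)$ all have vanishing cohomology by (a), and a routine long-exact-sequence d\'evissage then concludes. The main delicate point is the $H^2$-vanishing in part (a): one must recognize that although $|1-j|$ can be as large as $s-1$, the only way $1-j$ lands in $s\mathbb{Z}$ inside this range is the harmless edge case $j=1$, which is dispatched by Schur rather than by minimality of $s$.
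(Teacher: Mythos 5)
Your proposal is correct and follows essentially the same route as the paper: establish the $i=0$ vanishing of $H^0(G_F,\Ad^0\overline\theta(j))$ on a range wide enough to cover both $j$ and $1-j$ (the paper does $|j|\le s-1$ in one stroke, you split into the two ranges), then get $H^2$ by local Tate duality via the self-duality of $\Ad^0\overline\theta$ and $H^1$ by the local Euler characteristic formula, with (b) and (c) reduced to (a) through the $\Ad=\Ad^0\oplus k$ splitting and a d\'evissage along the $s$-small filtrations. The only difference is that you make explicit the details the paper leaves as "one checks."
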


\begin{proof} For part ({\it a}), one checks that the statement holds for $|j|\leq s-1$ when $i=0.$ The
full result then follows after an application of local Tate duality  and the Euler characteristic
formula. Part ({\it b}) of the lemma is then immediate from ({\it a}).

For part ({\it c}), we have
\[
\Hom\left(\rb_1\otimes\overline\theta,\rb_2\otimes\overline\theta
\right) \cong \Hom(\rb_1,\rb_2)\oplus
\Hom(\rb_1,\rb_2)\otimes\Ad^0\overline\theta,
\]
 and $H^i\left(G_F,
\Hom(\rb_1,\rb_2)\otimes\Ad^0\overline\theta\right)$ is trivial by part {\it
a.}
\end{proof}

Let $\theta:G_F\longrightarrow GL_n(W)$ be the unique (up to equivalence) lifting of
$\overline\theta$ with determinant the Teichm\"{u}ller lift of $\det\overline\theta.$
(The existence and uniqueness of such a representation is an immediate consequence of the above
lemma.) Fix also an $s$-small representation $\rb_0:G_F\longrightarrow GL_m(k)$ and a
deformation condition $\cD$ for $\rb_0.$

Define $\cD\otimes \theta$ to be the full subcategory of $\rep_{mn}(G_F)$ whose objects
are pairs
 $(A,\rho_A)$ with $\rho_A\sim \rho_0\!\otimes\theta$ for some
 $(A,\rho_0)\in \cD.$

 \begin{proposition} With notation as above, $\cD\otimes\theta$ is a deformation condition
 for $\rb_0\otimes\overline\theta.$ The tangent space for $\cD\otimes\theta$ is
 naturally identified with $\cD.$
\label{tensorprop}
 \end{proposition}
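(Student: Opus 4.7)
The plan is to verify the deformation-condition axioms (DC0)--(DC2) for $\cD\otimes\theta$ and then identify the tangent space via Lemma~\ref{tensorlemma}. Axiom (DC0) is immediate from $(k,\rb_0)\in\cD$. Axioms (DC1) and (DC2) both hinge on the following essential uniqueness statement: for any Artinian CNL algebra $A$ and any two lifts $\rho_0,\rho_0'$ of $\rb_0$ to $A$, one has $\rho_0\otimes\theta\sim\rho_0'\otimes\theta$ if and only if $\rho_0\sim\rho_0'$.

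To prove essential uniqueness, I would argue by induction on the length of $A$, reducing to showing that the centralizer of $\theta$ inside $\mathrm{End}_{G_F}(V(\rho_0)\otimes V(\theta))$ coincides with $\mathrm{End}_{G_F}(V(\rho_0))\otimes A\cdot I_n$. Using $\ell\nmid n$ to split $\mathrm{End}\,\theta=A\cdot I_n\oplus\Ad^0\theta$, the extra piece to kill is $H^0(G_F,\Hom(\rb_0,\rb_0)\otimes\Ad^0\overline\theta)$. Since $\rb_0$ is $s$-small, every Jordan--H\"{o}lder factor of $\Hom(\rb_0,\rb_0)\otimes\Ad^0\overline\theta$ has the form $\Ad^0\overline\theta(j)$ with $|j|\leq s-2$, and Lemma~\ref{tensorlemma}(a) forces the corresponding $H^0$ to vanish. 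Hence any intertwiner of $\rho_0\otimes\theta$ with $\rho_0'\otimes\theta$ has the shape $M_0\otimes I_n$ (up to an invertible scalar, which may be absorbed), yielding $\rho_0\sim\rho_0'$ via $M_0$.

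Axiom (DC2) then follows by a gluing argument. Given $(A\times_CB,\rho)$ with $\pi_A\rho\sim\tilde\rho_A\otimes\theta$ and $\pi_B\rho\sim\tilde\rho_B\otimes\theta$ coming from $\cD$-objects $(A,\tilde\rho_A),(B,\tilde\rho_B)$, essential uniqueness over $C$ shows $\alpha\tilde\rho_A\sim\beta\tilde\rho_B$. After conjugating $\tilde\rho_A$ appropriately, we may arrange $\alpha\tilde\rho_A=\beta\tilde\rho_B$, so the pair glues to $\tilde\rho:G_F\to GL_m(A\times_CB)$, which lies in $\cD$ by axiom (DC2) for $\cD$. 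The intertwiners conjugating $\pi_A\rho$ and $\pi_B\rho$ onto the respective tensor products can then be chosen compatibly over $C$, adjusting them by centralizer elements lifted via the surjections $\alpha,\beta$; these glue to a strict equivalence showing $\rho\sim\tilde\rho\otimes\theta$.

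For the tangent space, the decomposition $\Hom(\rb_0\otimes\overline\theta,\rb_0\otimes\overline\theta)=\Hom(\rb_0,\rb_0)\oplus\Hom(\rb_0,\rb_0)\otimes\Ad^0\overline\theta$ together with Lemma~\ref{tensorlemma} identifies $H^1(G_F,\Ad(\rb_0\otimes\overline\theta))$ with $H^1(G_F,\Ad\rb_0)$. Under this isomorphism, $\xi\in T(\cD\otimes\theta)$ corresponds to $\xi_0\in T\cD$ because the first-order deformation $(I+\epsilon\xi)(\rb_0\otimes\overline\theta)$ lies in $\cD\otimes\theta$ precisely when it is strictly equivalent to $(I+\epsilon\xi_0)(\rb_0)\otimes\theta$ for some $\xi_0\in T\cD$. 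The trickiest point, I expect, is the compatible choice of intertwiners in (DC2); every other step reduces to a cohomological vanishing already contained in Lemma~\ref{tensorlemma}.
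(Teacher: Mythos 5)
Your proof is correct, but it organises the argument differently from the paper, so a comparison is worthwhile. The paper's proof rests on two claims: Claim~1 (an \emph{existence} statement: every lift of $\rb_0\otimes\overline\theta$ over an Artinian CNL algebra is strictly equivalent to some $\rho_0\otimes\theta$), proved by obstruction theory and the $H^1$/$H^2$ isomorphisms of Lemma~\ref{tensorlemma}(c); and Claim~2 (your ``essential uniqueness''), proved by induction on length using the $H^1$ isomorphism. For (DC2) the paper applies Claim~1 directly to $\rho$ over the fibre product $A\times_CB$ and then uses Claim~2 to descend membership in $\cD$ to the projections of $\rho_0$. You dispense with Claim~1 entirely: you prove uniqueness via the $H^0$-computation of the intertwining module (every $G_F$-equivariant map $V(\rho_0\otimes\theta)\to V(\rho_0'\otimes\theta)$ is $M_0\otimes I_n$, since $H^0(G_F,\Hom(\rb_0,\rb_0)\otimes\Ad^0\overline\theta)=0$ by Lemma~\ref{tensorlemma}(a) applied to the Jordan--H\"older factors $\Ad^0\overline\theta(j)$, $|j|\leq s-2$, plus induction on length), and for (DC2) you glue the tensor decompositions of the two projections. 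Both routes ultimately reduce to the same vanishing in Lemma~\ref{tensorlemma}; yours avoids the $H^2$ obstruction argument at the cost of the intertwiner-matching step over $C$, while the paper's Claim~1 is a cleaner structural fact that also makes the later smoothness assertion of Theorem~\ref{tensortheorem} transparent.

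Two small points of care in your gluing step. First, only $\beta:B\to C$ is assumed small (hence surjective); $\alpha:A\to C$ need not be surjective, so the normalisation forcing $\alpha\tilde\rho_A=\beta\tilde\rho_B$ must be carried out by conjugating $\tilde\rho_B$ with a lift along $\beta$, not by conjugating $\tilde\rho_A$ along $\alpha$ as you wrote. Second, the discrepancy $\beta(M_B)^{-1}\alpha(M_A)$, which centralises $\beta\tilde\rho_B\otimes\theta$ and hence has the form $Z_0\otimes I_n$, need only be lifted along $\beta$ as a \emph{matrix} congruent to the identity (not as a centraliser element, which might not lift); replacing $M_B$ by $M_B(\tilde Z_0\otimes I_n)$ and $\tilde\rho_B$ by its $\tilde Z_0$-conjugate keeps $(B,\tilde\rho_B)$ in $\cD$ and makes the pair $(M_A,M_B)$ glue over $A\times_CB$. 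With these adjustments your argument goes through.
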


 \begin{proof} We first show that $\cD\otimes\theta$ is a deformation condition, and for that we
 need only verify that a lifting $\rho:G_F\longrightarrow A\times_BC$ is in  $\cD\otimes\theta$
 if the projections of $\rho$ to $A$ and $C$ are in $\cD\otimes\theta.$

\bigskip\noindent
 {\it Claim 1:} If $\rho: G_F\longrightarrow GL_{mn}(A)$ is a lifting of
 $\rb_0\otimes\overline\theta,$ then $\rho$ is strictly equivalent to
 $\rho_0\otimes\theta$ for some lifting $\rho_0:G_F\longrightarrow GL_m(A)$ of $\rb_0.$

 \medskip\noindent {\it Proof of claim:} We use induction on length for $A$ Artinian. Let $J$ be
 an ideal of $A$ killed by the maximal ideal $\m$ of $A.$ Then $\rho\ \textrm{mod}\ J$
is strictly equivalent to $\rho_1\otimes\theta$ for some lift to $A/J$ of $\rb_0.$ The
obstruction to lifting $\rho_1$ to $GL_m(A)$ lies in $H^2(G_F,\Ad\rb_0)\otimes J,$ and
the obstruction vanishes by Lemma~\ref{tensorlemma}, part {\it c.} We can therefore find a
lifting $\rho_0':G_F\longrightarrow GL_m(A)$ of $\rb_0$ such that
 $\rho\ \textrm{mod}\ J=\rho_0'\otimes \theta\
\textrm{mod}\ J.$ It follows that $\rho=\rho_0'\otimes\theta\left(1+\xi\right)$ for some $\xi\in
H^1(G_F,\Ad\rb_0\otimes\overline\theta)\otimes J,$ and the claim follows from
Lemma~\ref{tensorlemma}, part {\it c.}

\bigskip\noindent
{\it Claim 2:} If $\rho_1,\rho_2:G_F\longrightarrow GL_m(A)$ are two liftings of
$\rb_0$ and $\rho_1\otimes\theta\sim_\textrm{s}\rho_2\otimes\theta,$ then
$\rho_1\sim_\textrm{s}\rho_2.$

\medskip\noindent
{\it Proof of claim:} With $A,J$ as in the proof of claim 1 and using induction on length, one
deduces that assuming $\rho_1\ \textrm{mod}\ J=\rho_2\ \textrm{mod}\ J,$ we have
$\rho_1\otimes\theta=\rho_2\otimes\theta (1+\xi)$ with $\xi\in
H^1(G_F,\Ad\rb_0\otimes\overline\theta )\otimes J.$ Lemma~\ref{tensorlemma} again
completes the proof.

\bigskip
Now let $(A\times_BC,\rho)$ be a lifting of $\rb_0\otimes\overline\theta.$ We may
assume by claim 1 that $\rho=\rho_0\otimes\theta$ for  $\rho_0$ a  lifting of $\rb_0.$
If the projections of $\rho$ to $A$ and $C$ are in $\cD\otimes\theta,$ then claim 2
implies that the projections of $\rho_0$ to $A$ and $C$ are in $\cD.$ Hence
$(A\times_BC,\rho_0)\in\cD,$ thus proving the theorem.

The statement about tangent spaces is immediate from Lemma~\ref{tensorlemma}.\end{proof}

\begin{proof}[Proof of Theorem \ref{tensortheorem}] Twisting $\rb$ by a power of the cyclotomic character, we may assume that
$0\leq a_1,\ldots , a_m \leq s-2.$ It is then easy to see, using Lemma~\ref{tensorlemma}, that
$\rb\sim\rb_0\otimes \overline\theta$ where $\rb_0$ is a $s$-small
representation with $\rb_0^\textrm{ss}\cong k(a_1)\oplus\cdots\oplus k(a_m).$ Now let
$\mathcal{E}_0$ be the deformation condition for the tamely ramified representation
$\rb_0$ constructed in subsection~\ref{tamely ramified}, and take $\mathcal{E}$ to be
the deformation condition $\mathcal{E}_0\otimes\theta.$ All claims then follow from
Proposition~\ref{tensorprop} and properties of $\mathcal{E}_0.$ \end{proof}

\subsection{Induced representations} \label{induced repn}

Let $F\subsetneqq L$ be fixed   finite extensions of $\Q_p$. Set $n=[L:F]$. We assume we are given
a representation
$\rb:G_F\longrightarrow GL_{mn}(k) $ which is  induced from
$\overline\theta:G_L\longrightarrow GL_m(k).$
Let's fix a coset decomposition
\[
G_F=g_1G_L\sqcup\cdots\sqcup g_{n}G_L
\]
 with $g_1=e.$ Then $V(\rb)$
has a $G_L$ invariant vector subspace $M$ such that:
\begin{itemize}
\item $V(\overline\theta)\cong M$ as $G_L$-modules, and
\item $V(\rb)=g_1M\oplus\cdots\oplus g_nM.$
\end{itemize}
The subspace $N:=g_2M+\cdots g_{n}M$ is $G_L$ invariant and $V=M\oplus N$ as $G_L$-modules. Let
$\overline\vartheta:G_L\longrightarrow GL_{(n-1)m}(k)$ be a representation given by (some fixed
choice of basis of) $N.$ Assume that:
\begin{itemize}
\item $\rb|_{G_L}=\overline\theta\oplus\overline\vartheta$, and \item
$\Hom_{G_L}\left(M,N(r)\right)=(0)$ for all $r\in\Z.$\end{itemize} Under these assumptions,  we
have canonical isomorphisms
\[
H^i(G_F,\Ad\rb)\cong H^i(G_L,\Ad\overline\theta)
\]
 by Shapiro's lemma.
Furthermore, Proposition \ref{lifting2} shows that any lift $\rho:G_F\longrightarrow GL_{mn}(R)$ of
$\rb$ restricted to $G_L$ is strictly equivalent to $\theta\oplus\vartheta$ where
$\theta, \vartheta$ are lifts of $\overline\theta$ and $\overline\vartheta.$

\begin{lemma}\label{ind1}
 Let $A$ be an Artinian CNL $W$-algebra, and
let $\rho: G_F\longrightarrow GL_{mn}(A)$ be a lift of $\rb.$ If
$$\rho|_{G_L}=\theta\oplus\vartheta$$ with $\theta,\vartheta$
lifts of $\overline\theta, \overline\vartheta,$ then $\rho$ is equivalent to
$\textrm{Ind}\,\theta.$\end{lemma}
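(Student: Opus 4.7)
The plan is to exhibit a direct $G_F$-equivariant isomorphism $V(\rho)\cong V(\textrm{Ind}_{G_L}^{G_F}\theta)$ by locating the standard decomposition of the induced module inside $V(\rho)$. Write $V:=V(\rho)$ and use the given block form $\rho|_{G_L}=\theta\oplus\vartheta$ to split $V=V_\theta\oplus V_\vartheta$ as $G_L$-modules, so that $V_\theta$ is free of rank $m$ and $G_L$ acts on it through $\theta$. The $A$-submodules $g_iV_\theta$ (with $g_1=e$) are then the natural candidates for the summands $g_i\otimes V(\theta)$ of $\textrm{Ind}\,\theta$.

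The key step will be to show that $V=\bigoplus_{i=1}^n g_iV_\theta$ as $A$-modules. Reducing modulo $\m_A$, the block basis of $V$ specialises to a basis of $V(\rb)$, so $V_\theta/\m_AV_\theta=M$ and consequently $g_iV_\theta$ reduces to $g_iM$; since $V(\rb)=\bigoplus_i g_iM$ by the setup, the $A$-linear summation map $\Phi\colon\bigoplus_i g_iV_\theta\to V$ is an isomorphism modulo $\m_A$. Because both domain and target are free $A$-modules of rank $mn$, $\Phi$ is represented by a square matrix whose reduction modulo $\m_A$ is invertible, so $\Phi$ itself is an isomorphism.

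Finally, I would define $\phi\colon V(\textrm{Ind}_{G_L}^{G_F}\theta)\to V$ by $g_i\otimes v\mapsto g_iv$ for $v\in V_\theta$. It is an $A$-isomorphism by the previous paragraph. For $G_F$-equivariance, write $gg_i=g_{\sigma(i)}h_i$ with $h_i\in G_L$; using that $hv=\theta(h)v$ for $v\in V_\theta$ and $h\in G_L$, one computes
\[
\phi\bigl(g\cdot(g_i\otimes v)\bigr)=\phi\bigl(g_{\sigma(i)}\otimes\theta(h_i)v\bigr)=g_{\sigma(i)}h_iv=gg_iv=g\cdot\phi(g_i\otimes v),
\]
which gives the desired equivalence $\rho\sim\textrm{Ind}\,\theta$.

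I do not expect a serious obstacle: once the residual decomposition $V(\rb)=\bigoplus_i g_iM$ from the setup fixes the block reduction modulo $\m_A$, the argument is pure linear algebra plus the bookkeeping of the induction formula. In particular no obstruction theory or induction on the length of $A$ is needed here, because the $G_L$-level decomposition of $\rho$ is already assumed in the statement; all that has to be upgraded is the $G_F$-action, and that action is forced by comparison with the induced module.
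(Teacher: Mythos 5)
Your proposal is correct and follows essentially the same route as the paper: lift the $G_L$-decomposition to get $V_\theta$, show $V(\rho)=\bigoplus_i g_iV_\theta$ by reducing modulo $\m_A$ (the paper invokes Nakayama's lemma plus freeness, which is the same rank argument you make), and then identify the result with $\textrm{Ind}\,\theta$. The only cosmetic difference is that you verify the $G_F$-equivariance of the identification explicitly, whereas the paper cites Proposition~10.5 of \cite{CR} for the characterisation of induced modules.
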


\begin{proof} We fix a basis for $V(\rb)$ as follows:
View $V(\overline\theta)$ as a subspace of $V(\rb)$ via
$V(\rb)=V(\overline\theta)\oplus V(\overline\vartheta),$ and take the basis
$\{g_i\overline\e_j\,|\, 1\leq i\leq n,\,1\leq j\leq m\}$ with
$\{\overline\e_1,\ldots,\overline\e_m\}$ a basis of $V(\overline\theta).$ Now
$V(\rho)=V(\theta)\oplus V(\vartheta)$ as $A[G_L]$-modules, and so we can pick a basis
$\{\e_1,\ldots,\e_m\}$ of $V(\theta)$ such that $\e_i$ is a lift of $\overline\e_i.$ It is now
clear that
\[
V(\rho)=g_1V(\theta)+\cdots+g_nV(\theta)+\mathfrak{m}_AV(\rho),
\]
and therefore, by Nakayama's lemma, one sees that
\[
V(\rho)=g_1V(\theta)\oplus\cdots\oplus g_nV(\theta).
\]
This completes the proof (using, for instance, Proposition~10.5 of \cite{CR}).
\end{proof}

Now let $\cF$ be a deformation condition for $\theta$,
and denote by $\textrm{Ind}\cF$ the full subcategory of $\rep_{mn}(G_F;k)$
whose  objects
are $(A,\rho)\in\rep_{mn}(G_F;k)$ with $V(\rho)\cong \textrm{Ind}V(\theta)$ for some
$(A,\theta)\in\cF.$

\begin{proposition}  $\textrm{Ind}\cF$ is a deformation
condition for $\rb.$ If $\cF$ is well-behaved then so is
$\textrm{Ind}\cF.$  \label{ind2}\end{proposition}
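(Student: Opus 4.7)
The plan is to verify the deformation-condition axioms (DC0)--(DC2) for $\textrm{Ind}\cF$, and then to read off well-behavedness from the corresponding properties of $\cF$ via Lemma \ref{ind1} and Shapiro's lemma. Conditions (DC0) and (DC1) are formal consequences of the functoriality of induction under ring maps: if $\rho_A \sim \textrm{Ind}\,\theta_A$ with $(A,\theta_A)\in\cF$ and $f:A\longrightarrow B$ is a morphism of CNL algebras, then $f\rho_A \sim \textrm{Ind}\,f\theta_A$ with $(B,f\theta_A)\in\cF$ by (DC1) for $\cF$, and this propagates through strict equivalence.

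The real content lies in (DC2). Given a cartesian diagram with $\beta$ small and an object $(A\times_CB, \rho)$ in $\rep_{mn}(G_F;k)$, the ``only if'' direction is immediate from the same functoriality. For ``if'', I would first invoke the decomposition statement preceding Lemma \ref{ind1}---using the hypothesis $\Hom_{G_L}(M, N(r))=(0)$, via a cohomological obstruction argument patterned on the proof of Proposition \ref{H1}, to conclude that $\rho|_{G_L}$ is strictly equivalent to a sum $\theta\oplus\vartheta$ with $\theta$, $\vartheta$ lifts of $\overline\theta$, $\overline\vartheta$---and then apply Lemma \ref{ind1} to get $\rho \sim \textrm{Ind}\,\theta$. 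It then remains to show that $\pi_A\theta\in\cF$ and $\pi_B\theta\in\cF$; applying (DC2) for $\cF$ will then place $\theta\in\cF$, so that $(A\times_CB,\rho)\in\textrm{Ind}\cF$.

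The main obstacle is this last step, which reduces to showing that $\textrm{Ind}$ is essentially injective on strict-equivalence classes of lifts: if $\textrm{Ind}\,\theta_1 \sim \textrm{Ind}\,\theta_2$ as $G_F$-lifts of $\rb$, then $\theta_1 \sim \theta_2$ as $G_L$-lifts of $\overline\theta$. I expect this to follow by induction along small ideals from the very same $\Hom$-vanishing hypothesis: a $G_F$-equivariant equivalence, restricted to $G_L$, must preserve the unique $G_L$-direct-summand of $V(\textrm{Ind}\,\theta_i)$ lifting $M$, uniqueness being forced by $\Hom_{G_L}(M, N(r))=(0)$ (which kills the cohomological ambiguity analogously to Lemma \ref{H^i}). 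Granting this, one matches $\pi_A\theta$ with any $\theta_A\in\cF$ realising $(A,\pi_A\rho)\in\textrm{Ind}\cF$, and similarly on the $B$-side.

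For well-behavedness, smoothness is clean: given a small surjection $R\longrightarrow S$ and a type $\textrm{Ind}\cF$ deformation $\rho_S \sim \textrm{Ind}\,\theta_S$ with $(S,\theta_S)\in\cF$, smoothness of $\cF$ lifts $\theta_S$ to some $(R,\theta_R)\in\cF$, and $\textrm{Ind}\,\theta_R$ is the required lift of $\rho_S$. For the tangent space, identifying deformations to $k[\epsilon]/(\epsilon^2)$ on both sides through Lemma \ref{ind1} shows that Shapiro's isomorphism $H^1(G_F,\Ad\rb)\cong H^1(G_L,\Ad\overline\theta)$ carries $T(\textrm{Ind}\cF)$ onto $T\cF$. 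Combined with Shapiro's lemma in degree zero and the assumed well-behavedness of $\cF$, this yields $\dim T(\textrm{Ind}\cF) = \dim T\cF = \dim H^0(G_L, \Ad\overline\theta) = \dim H^0(G_F, \Ad\rb)$, as required.
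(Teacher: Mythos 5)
Your proposal is correct and follows essentially the same route as the paper: reduce to (DC2), decompose $\rho|_{G_L}$ as $\theta\oplus\vartheta$ via the $\Hom$-vanishing hypothesis, apply Lemma \ref{ind1}, and then use the key "essential injectivity of induction" step — which the paper realises by showing the composite $V(\theta')\hookrightarrow V(\alpha\rho)\cong V(\alpha\theta)\oplus V(\alpha\vartheta)\to V(\alpha\theta)$ is a $G_L$-isomorphism, exactly the uniqueness of the $G_L$-summand you invoke — before finishing with Shapiro's lemma for the tangent space. The one step you flag as an expectation is indeed correct and is proved in the paper just as you predict, by the vanishing of $\Hom_{G_L}$ between lifts of $\overline\theta$ and $\overline\vartheta$ plus Nakayama.
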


\begin{proof} To show that $\textrm{Ind}\cF$ is a
deformation condition, we need only check  (DC2). Suppose given $\alpha:A\longrightarrow
C,\beta:B\longrightarrow C,$ with $\beta$ small, and a lift
\[
\rho:G_F\longrightarrow GL_{mn}\left(A\times_CB\right)
\]
of $\rb$ with $(A,\alpha\rho),(B,\beta\rho)$ in
 $\textrm{Ind}\cF\rep_{mn}.$
Conjugating by an element of $GL_{mn}(A\times_CB),$ we can take $\rho$ to be a lift of
$\rb,$ and that
$\rho|_{G_L}=\theta\oplus\vartheta$
where $\theta,\vartheta$ are lifts of $\overline\theta$ and $\overline\vartheta.$  Since
$\rho\sim\textrm{Ind}\,\theta$ by Lemma~\ref{ind1}, we need to verify that $(A\times_CB,\theta)$
is in $\cF\rep_m.$

Let $(A,\theta')$ be an object of $\cF\rep_m$ with $\textrm{Ind}\theta'\sim \alpha\rho.$
By Proposition~\ref{H2}, the composite
\[
V(\theta')\hookrightarrow V(\alpha\rho)\cong
V(\alpha\theta)\oplus V(\alpha\vartheta)\longrightarrow V(\alpha\theta)
\]
 is an isomorphism of
$A[G_L]$-modules. Hence $(A,\alpha\theta)$ is an object of $\cF\rep_m$. Similarly,
$(B,\beta\theta)$ is an object of $\cF\rep_m$, and hence $(A\times_CB,\theta)$ is in
$\cF\rep_m$.

Clearly, $\textrm{Ind}\cF$ is smooth if $\cF$ is, and the tangent space for
$\textrm{Ind}\cF$ is the image of $T\cD$ under the Shapiro isomorphism. The
(uni)versal deformation ring for  is a power series ring over $W$, the restriction of the
determinant of the (uni)versal $\textrm{Ind}\cF$ deformation is the Teichm\"{u}ller lift
of $\det\rb$. The second statement of the proposition now follows.\end{proof}

\subsection{Proof of Theorem \ref{main local}}
\label{main local proof}

Recall we are assuming  that our representation $\rb:G_F\longrightarrow GL_N(k)$  has all irreducible
components occurring in the semi-simplification of $\rb$ absolutely irreducible, and that  $[F(\zeta_\ell):F]\geq 3N$ for
$p\leq N$. Our task is to construct a well-behaved deformation condition for $\rb$.
Let's fix absolutely irreducible continuous representations
\[
\overline\theta_i:G_F\longrightarrow GL_{n_i}(k),\ i=1,\ldots ,n
\]
 such that:
\begin{itemize}
\item if $i\neq j,$ then $\overline\theta_i$ and $\overline\theta_j(r)$ are not equivalent for
any $r\in\Z ;$ \item $\rb^{ss}$ is a direct sum of $\overline\theta_i,\ i=1,\ldots, n,$
and Tate twists of $\overline\theta_i$'s.
\end{itemize}

\begin{lemma} Let $V$ be the underlying $k[G_F]$-module for
$\rb.$ Then $V$ has a submodule isomorphic to $V(\overline\theta_i)$ for each $i.$ If
$V_i$ denotes the maximal submodule of $V$ whose composition series consists only of
$\overline\theta_i$ and Tate twists of $\overline\theta_i,$ then
$V=V_1\oplus\cdots\oplus V_n.$
Furthermore, for any $r\in\Z, i\neq j,$ we have
\[
\Hom_{G_F}\left(V_i,V_j(r)\right)=(0).
\]
\end{lemma}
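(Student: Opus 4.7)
The plan is to prove the three claims, with the decomposition being the principal task and a cohomological vanishing the key technical input.

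First I would establish that for $i\neq j$ and any integers $a,b\in\Z$,
\[
H^*\bigl(G_F,\Hom(\overline\theta_j(b),\overline\theta_i(a))\bigr)=0\quad\text{for }*=0,1,2.
\]
The vanishing of $H^0$ is immediate from Schur's lemma combined with the standing hypothesis that $\overline\theta_i\not\sim\overline\theta_j(r)$ for any $r\in\Z$ (using absolute irreducibility). By local Tate duality, $H^2$ is dual to the $H^0$ of $\overline\theta_i^\vee\otimes\overline\theta_j(b-a+1)=\Hom(\overline\theta_i,\overline\theta_j(b-a+1))$, which vanishes for the same reason. Since the residue characteristic $p$ of $F$ differs from $\ell$, the local Euler–Poincar\'e characteristic is zero, so $H^1$ vanishes as well.

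Next I would prove the decomposition $V=V_1\oplus\cdots\oplus V_n$ by induction on the length of $V$. Pick a simple submodule $U\subseteq V$; by absolute irreducibility of the composition factors, $U\cong\overline\theta_c(a)$ for some $c$ and some $a\in\Z$. By induction $V/U=\bigoplus_d W_d$ with $W_d$ having composition factors only among twists of $\overline\theta_d$. Let $\pi:V\to V/U$ denote the projection and consider the extension
\[
0\longrightarrow U\longrightarrow \pi^{-1}\Bigl(\bigoplus_{d\neq c}W_d\Bigr)\longrightarrow \bigoplus_{d\neq c}W_d\longrightarrow 0.
\]
The $H^1$ vanishing above, together with a d\'evissage up the composition series of $W_d$, forces $\textrm{Ext}^1_{G_F}(W_d,U)=0$ for every $d\neq c$, so this extension splits. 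Letting $X$ be a complement to $U$ in the middle term, setting $V_c:=\pi^{-1}(W_c)$ and letting $V_d$ be the $W_d$-summand of $X$ for $d\neq c$, one checks $V=V_c\oplus\bigl(\bigoplus_{d\neq c}V_d\bigr)$, and each $V_d$ so obtained is the maximal submodule of $V$ whose composition factors are twists of $\overline\theta_d$.

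The final Hom vanishing is then formal: any $G_F$-map $V_i\to V_j(r)$ has image which is simultaneously a quotient of $V_i$ (so has composition factors among twists of $\overline\theta_i$) and a submodule of $V_j(r)$ (so has composition factors among twists of $\overline\theta_j$); disjointness of the two twist classes for $i\neq j$ forces the image to be zero. As for the opening assertion that $V$ contains a submodule isomorphic to $V(\overline\theta_i)$, we use the freedom to choose the representative $\overline\theta_i$ within its Tate-twist class: after replacing $\overline\theta_i$ by an appropriate twist we may assume $\overline\theta_i$ itself appears in the semisimple socle of the nonzero summand $V_i$, giving the required embedding. The main obstacle, modest in our local setting, is the $H^1$ vanishing, which hinges both on $p\neq\ell$ killing the local Euler characteristic and on the Tate-twist inequivalence of the $\overline\theta_i$'s killing $H^0$ and $H^2$; once these are in hand, the d\'evissage is routine.
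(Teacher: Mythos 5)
Your proposal is correct and follows essentially the same route as the paper: induction on the length of $V$, peeling off a simple submodule and splitting the resulting extension via the vanishing of $H^{*}\bigl(G_F,\Hom(\overline\theta_j(b),\overline\theta_i(a))\bigr)$ for $i\neq j$, which both you and the paper deduce from Schur's lemma, local Tate duality, and the local Euler characteristic formula (using $p\neq\ell$). Your treatment of the splitting (choosing an explicit complement) and of the first assertion (renormalising $\overline\theta_i$ within its twist class) is just a more detailed rendering of what the paper does implicitly.
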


\begin{proof} We may suppose that $V$ has a submodule $U$
isomorphic to $\overline\theta_1.$ Using induction, we get an exact sequence of $k[G_F]$ modules
\[
0\longrightarrow U\longrightarrow V\longrightarrow M_1\oplus
\cdots \oplus M_n\longrightarrow 0
\]
 where each $M_i$ composition series consisting only of
$\overline\theta_i$ and Tate twists of $\overline\theta_i.$ Thus $V$ corresponds to an element of
\[
H^1\left(G_F, \Hom(M_1\oplus
\cdots \oplus M_n, U)\right).
\]
 By Tate local duality, $H^1(G_F,\Hom(M_i, U))$ is trivial
if $i\neq 1,$ and the proposition follows.\end{proof}

By Theorem~\ref{directsum} and the above lemma, we can assume that the semi-simplification of
$\rb$ is a direct sum of Tate twists of a single absolutely irreducible representation
$\overline\theta: G_F\longrightarrow GL_n(k).$ If $\overline\theta$ is tamely ramified, we
proceed as in subsection \ref{tamely ramified}, Proposition~\ref{tameprop}.

Now assume that $\overline\theta$ is wildly ramified. We shall deal with the case when $p\leq N$
first. Let $s$ be the smallest positive integer such that $\theta\sim \theta(s),$ and let $m$ be
the number irreducible components of $\rb^\textrm{ss}$ isomorphic to some Tate twist of
$\overline\theta.$ The inequalities $ns\geq 3N$ (obtained by comparing determinants
of $\theta$ and $\theta(s)$) and $nm\leq N$ imply that $1\leq m\leq s-2$. The
existence of a well-behaved deformation condition then follows from Theorem~\ref{tensortheorem}.

Finally, assume from here on that $\overline\theta$ is wildly ramified and $p>N.$ Let
$\rb^{ss}\cong \overline\theta(i_1)\oplus\cdots\oplus\overline\theta(i_m),$  and denote
by $F(\rb)$ the extension of $F$ through which $\rb$ factorises. Since $n<p$
the $p$-part of the determinant of $\overline\theta$ can be made trivial after twisting by a
character $G_F\longrightarrow k^\times$. A consideration of ramification subgroups shows that we
can find an abelian normal,wildly ramified, $p$-subgroup $Z\lhd\Gal(F(\rb)/F)$. The assumption we just
made on the determinant shows that $\overline\theta|_Z$ is not central.

We now give a characterisation of $\rb$ as an induced module.  The representation
$\rb$ when restricted to $Z$ splits as a direct sum of characters. Clearly, if
$\overline\theta|_Z\sim \chi_1\oplus\cdots\oplus\chi_d,$ then
$\rb|_Z\sim\left(\chi_1\oplus\cdots\oplus\chi_d\right)^{mn/d}.$ We fix one such
character $\chi $ and set
\[
V[\chi]:=\left\{v\in
V(\rb)\,|\,\rb(z)(v)=\chi(z)v\quad\text{for all $z\in Z$}\right\}.
\]
 If
$g\in\Gal\left(F(\rb)/F\right),$ then the character ${}^g\!\chi$ defined by
\[
{}^g\!\chi(z):=\chi(gzg^{-1})
\]
 is also a constituent character
of $\overline\theta|_Z,$ and we have $V[{}^g\!\chi]=gV[\chi].$ Thus
$\Gal\left(F(\rb)/F\right)$ acts transitively on the distinct constituent characters of
$\overline\theta|_Z$  and there are at least two distinct constituent characters. Let $L$ be the
finite extension of $F$ inside $F(\rb)$ cut out by the stabiliser of $\chi,$ and fix a
coset decomposition
\[
G_F=g_1G_L\sqcup\cdots\sqcup g_{n}G_L
\]
 with $g_1=e.$
Then
$V= g_1V[\chi]\oplus\cdots\oplus g_nV[\chi],$ and so $V$ is induced from
the $G_L$-module $V[\chi].$ Since $\chi$ is a wildly ramified character,
\[
\Hom_Z\left( V[{}^g\!\chi],
V[{}^{g'}\!\chi]\right)=(0)
\]
 if $gG_L\neq g'G_L,$ and so for any $r\in\Z,$ we have
\[
\Hom_{G_L}\left(V[\chi], (g_2V[\chi]\oplus\cdots\oplus
g_nV[\chi])(r)\right)=(0).
\]

Finally, inductively on $N,$ one can find a well-behaved deformation condition for the
representation of $G_L$ arising from $V[\chi].$ Using Theorem~\ref{ind2}, the induced deformation
condition is a well-behaved deformation condition for $\rb.$

\subsection{Deformations at special unramified primes\label{ramakrishna}}

We  conclude this section with a look at a special class of smooth local deformations which are of great significance
in reducing dimensions of (global) dual Selmer groups.
So let $F$ be a finite extension of $\Q_p$ and let $\rb:G_F\longrightarrow GL_n(k)$ be the diagonal representation
\[
\rb= \begin{pmatrix}
\bar\omega^{n-1}&&&\\
&\bar\omega^{n-2}&& \\
&&\ddots&\\
&&& 1\end{pmatrix},
\]
We assume that the order of the mod $\ell$ cyclotomic character
$\bar\omega$ is greater than $n.$ Fix a
 basis $\{\e_1,\e_2,\ldots, \e_n\}$
with $\rb$ acting on $\e_i$ by the character $\bar\omega^{n-i}.$

We write $B_n$ for the standard Borel subgroup of $GL_n$ consisting of upper triangular matrices and set
\[
b\rb:=\bigoplus_{1\leq i\leq j\leq n}\Hom(k\e_j,k\e_i).
\]
Note that
\[
\Ad\rb\cong\bigoplus_{1\leq i,j\leq n}\Hom(k\e_j,k\e_i)
\cong\bigoplus_{1\leq i,j\leq n}k(i-j)
\]
 as $G_F$-modules. This identification will be fixed.

\begin{proposition} Let $\rho:G_F\longrightarrow GL_n(A)$ be a
lift of $\rb.$ Then $\rho$ is strictly equivalent to an upper triangular
representation.\label{borel1}\end{proposition}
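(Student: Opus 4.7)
The plan is to induct on the length of $A$, passing to the inverse limit over Artinian quotients for the general case. The base case $A = k$ is immediate since $\rb$ is diagonal. For the inductive step, pick an ideal $J \subset A$ with $\mathfrak{m}_A J = 0$ (so $J$ is naturally a $k$-vector space). By the inductive hypothesis, $\rho \pmod J$ is strictly equivalent to an upper triangular representation over $A/J$; lifting the conjugating matrix along the surjection $GL_n(A) \twoheadrightarrow GL_n(A/J)$ gives a strict equivalence over $A$ after which I may assume $\rho \pmod J$ is already upper triangular.

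Now write $\rho(g) = U(g) + Z(g)$ with $U(g) \in M_n(A)$ upper triangular and $Z(g)$ having entries in $J$; absorbing the upper triangular part of $Z$ into $U$, I may further assume $Z(g)$ is strictly lower triangular. Because $\mathfrak{m}_A J = 0$, any product involving a $J$-valued matrix and an $A$-valued matrix depends only on the mod-$\mathfrak{m}_A$ reduction of the $A$-factor. Expanding $\rho(gh) = \rho(g)\rho(h)$ via this observation shows that $U$ is itself an upper triangular representation over $A$ and that $\widetilde Z(g) := Z(g)\rb(g)^{-1}$ is a $1$-cocycle with values in the $G_F$-module $L \otimes_k J$, where $L \subset \Ad\rb$ denotes the strictly lower triangular part. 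A direct calculation (using that the entries of $N^2$ lie in $J^2 = 0$ for any $N$ with entries in $J$) shows that conjugating $\rho$ by $I + N$, with $N$ strictly lower triangular and $J$-valued, adjusts $\widetilde Z$ by a coboundary. The obstruction to making $\rho$ upper triangular is therefore the class $[\widetilde Z] \in H^1(G_F, L \otimes_k J)$.

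The key step is to show this cohomology vanishes. The $(i,j)$-entry of $\Ad\rb$ for $i > j$ carries the $G_F$-character $\bar\omega^{j-i}$, so $L$ decomposes as a direct sum of characters $k(d)$ with $d$ ranging over $\{-(n-1), \ldots, -1\}$. For each such $d$, the hypothesis $\operatorname{ord}(\bar\omega) > n$ implies $\bar\omega^d \neq 1$ (so $H^0(G_F, k(d)) = 0$) and $\bar\omega^{1-d} \neq 1$ (since $1-d \in \{2, \ldots, n\}$), so by local Tate duality $H^2(G_F, k(d)) \cong H^0(G_F, k(1-d))^\vee = 0$. Since $\ell \neq p$ the local Euler characteristic vanishes, forcing $H^1(G_F, k(d)) = 0$ for each $d$. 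Summing over summands and tensoring with $J$ gives the required vanishing, so $\widetilde Z$ is a coboundary and an appropriate conjugation brings $\rho$ into upper triangular form. The main obstacle is precisely this cohomological vanishing, which rests squarely on the hypothesis that $\bar\omega$ has order larger than $n$ (without which the $H^2$'s for $d$ near $1$ would be nontrivial and the whole strategy would fail).
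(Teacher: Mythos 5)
Your proof is correct and follows essentially the same route as the paper: induction on the length of $A$ together with the vanishing of $H^{*}(G_F,\Hom(k\e_j,k\e_i))=H^{*}(G_F,k(i-j))$ for $j>i$, which rests on $\bar\omega$ having order greater than $n$. The only difference is presentational — you track the strictly lower-triangular discrepancy as an explicit $1$-cocycle, while the paper packages the same computation as an obstruction in $H^2(G_F,b\rb)\otimes J$ followed by an $H^1$ comparison.
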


\begin{proof} Let $J$ be an ideal of $A$ killed by
$\mathfrak{m}_A,$ and assume that $\rho\,\textrm{mod}\,J$ is upper triangular. The obstruction to
lifting $\rho\,\textrm{mod}\,J$ to $B_n(A)$ is given by an element
\[
\xi\in H^2\left(G_F,b\rb\right)\otimes J.
\]
 The obstruction $\xi$ is
trivial because the image of $\xi$ in $H^2(G_F,\Ad\rb)\otimes J$ is trivial, and
$H^*(G_F,\Hom(k\e_j,k\e_i))=(0)$ if $j>i.$ Thus there is an upper triangular lift $\rho'$ of
$\rho\,\textrm{mod}\,J.$ Now $\rho=(I+\psi)\rho'$ with
\[
\psi\in
H^1(G_F,\Ad\rb)\otimes J=H^1\left(G_F,b\rb\right)\otimes J,
\]
 and the
proposition follows.
\end{proof}

Let $\B$ be the full subcategory of $\rep_n(G_F;k)$ with objects $(A,\rho)$ satisfying
$\rho\mod{\m_A}=\rb$ and $\rho$ Borel i.e.
\[
\rho\sim \begin{pmatrix}
\omega^{n-1}&*&&*\\
&\omega^{n-2}&& \\
&&\ddots&\vdots\\
&&& 1\end{pmatrix}.
\]
It is easy to see that $\B$  determines deformation conditions
for $\rb.$  We shall refer to the deformation condition $\B$ as the {\em Ramakrishna
condition}. (When $n=2,$ these are the deformation conditions discussed in section 3 of
\cite{ramakrishna}.)

\begin{proposition} $\B0$ is smooth and its tangent space is
$$\bigoplus_{i=1}^{n-1}H^1\left(G_F,\Hom(k\e_{i+1},k\e_i)\right).$$
\label{borel2}\end{proposition}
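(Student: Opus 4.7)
The plan is to prove smoothness by a stepwise unipotent lifting through the filtration of the upper triangular radical, and then read off the tangent space from the cocycle description of triangular deformations.

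For smoothness, let $A \twoheadrightarrow A/J$ be a small surjection with $\m_A J = 0$ and let $(A/J,\rho_{A/J}) \in \B$. By Proposition \ref{borel1} I may assume after strict equivalence that $\rho_{A/J}$ is literally upper triangular with diagonal $\omega^{n-1},\ldots,1$, and I seek an upper triangular lift $\rho_A$ with the same prescribed diagonal. Filter the strictly upper triangular adjoint $u\rb := \bigoplus_{i<j}\Hom(k\e_j,k\e_i)$ by superdiagonal level, $u\rb^{(k)} := \bigoplus_{j-i\ge k}\Hom(k\e_j,k\e_i)$, so that the successive quotients satisfy $u\rb^{(k)}/u\rb^{(k+1)} \cong \bigoplus_{i=1}^{n-k} k(k)$. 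At each stage $k$ the obstruction lives in $H^2(G_F,\,u\rb^{(k)}/u\rb^{(k+1)}) \otimes J$. For $2 \le k \le n-1$, local Tate duality identifies $H^2(G_F, k(k))$ with $H^0(G_F, k(1-k))^\vee$, and these vanish because $1-k \not\equiv 0$ modulo the order of $\overline\omega$, which exceeds $n$ by hypothesis. For $k = 1$, $H^2(G_F, k(1))$ is nontrivial, and I would dispose of the obstruction by passing to the ambient $\rep_n$-lifting problem: the image of the Borel obstruction in $H^2(G_F,\Ad\rb) \otimes J$ controls lifting of $\rho_{A/J}$ to $GL_n(A)$, and the argument of Proposition \ref{borel1} then converts any ambient lift back into Borel form with the prescribed diagonal.

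For the tangent space, a deformation of $\rb$ to $k[\epsilon]/\epsilon^2$ lies in $\B$ if and only if it is strictly equivalent to $(1 + \epsilon\xi)\rb$ with $\xi: G_F \to u\rb$ a 1-cocycle. Two such triangular cocycles are strictly equivalent precisely via coboundaries of elements of $u\rb$: decomposing a conjugating matrix as $X = X_U + X_T + X_L$ into its strictly upper, diagonal, and strictly lower parts, the $X_T$-coboundary vanishes because the diagonal adjoint is $G_F$-fixed, while the $X_L$-coboundary must itself be strictly upper triangular, which forces $X_L \in (l\rb)^{G_F} = 0$ by the hypothesis on the order of $\overline\omega$. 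Therefore $T\B$ embeds into $H^1(G_F,\Ad\rb)$ as $H^1(G_F, u\rb)$, and the superdiagonal decomposition identifies this space with $\bigoplus_{i=1}^{n-1} H^1(G_F, \Hom(k\e_{i+1}, k\e_i))$.

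The main obstacle, in my view, is the $k = 1$ step of the smoothness argument: because $H^2(G_F, k(1))$ does not vanish, the obstruction cannot be eliminated by pure cohomological considerations, and the argument must leverage the compatibility between Borel-valued lifts and ambient $GL_n$-lifts established in Proposition \ref{borel1}.
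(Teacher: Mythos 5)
Your tangent-space computation is fine, and your treatment of the levels $k\ge 2$ of the superdiagonal filtration agrees with the paper's: both reduce to the vanishing of $H^2(G_F,k(j-i))$ for $j-i\ge 2$, which holds because $\bar\omega$ has order greater than $n$. But the $k=1$ step, which you correctly single out as the crux, has a genuine gap. Your proposed fix is to pass to the ambient $GL_n$-lifting problem and then use Proposition~\ref{borel1} to put an ambient lift back into Borel form. The trouble is that Proposition~\ref{borel1} takes the existence of an ambient lift over $A$ as a \emph{hypothesis}; it does not produce one. And the ambient problem is itself obstructed: $H^2(G_F,\Ad\rb)$ contains the $n-1$ superdiagonal summands $H^2(G_F,k(1))\cong H^0(G_F,k)^\vee\neq(0)$, so there is no reason an ambient lift of $\rho_{A/J}$ to $GL_n(A)$ should exist. (Indeed, for this $\rb$ the unrestricted deformation functor is in general not smooth --- the whole point of the Ramakrishna condition is to carve a smooth subfunctor out of a non-smooth ambient functor.) Your argument is therefore circular at exactly the step where the real work happens.

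The paper closes this gap by a different mechanism: writing $\rho=(b_{ij}\omega^{j-i})$, each superdiagonal entry $b_{i,i+1}$ is a $1$-cocycle valued in $B(1)$, and the reduction map $H^1(G_F,A(1))\to H^1(G_F,B(1))$ is \emph{surjective} for any surjection $A\to B$ (this is the computation of example E4 of \cite{taylorartin}; surjectivity holds even though $H^2(G_F,k(1))\neq(0)$, essentially because $H^1(G_F,\,\cdot\,(1))$ is, by Kummer theory, a right-exact functor of the coefficient ring). One first lifts the superdiagonal entries using this surjectivity, and only then is the remaining obstruction confined to $H^2\left(G_F,\oplus_{j-i\ge 2}\Hom(k\e_j,k\e_i)\right)=(0)$. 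Some input of this kind, special to the Tate twist $k(1)$, is unavoidable; a purely obstruction-theoretic argument cannot succeed here.
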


\begin{proof}
 Let
 $\rho:G_F\longrightarrow GL_n(B)$ be a representation, say
 $\rho=\left(b_{ij}\omega^{j-i}\right)$ where
 $b_{ij}:G_F\longrightarrow B$ are functions with
\[
 b_{ij}(\sigma)=\left\{\begin{matrix} 0,\ \textrm{if}\ i>j,\\
1,\ \textrm{if}\ i=j\end{matrix}\right.
\]
 for any $\sigma\in G_F.$ Each $b_{i,i+1}\in
H^1(G_F,B(1)).$ The calculation in example E4 of \cite{taylorartin} shows that for a surjection
$f:A\longrightarrow B,$ the map
\[
H^1(G_F, A(1))\longrightarrow H^1(G_F, B(1))
\]
 is surjective. If
we assume $f$ to be small, it follows that the obstruction to there being a lift of type $\B$
of $\rho$ to $A$ is given by an element of
\[
H^2\left(G_F,\oplus_{j-i\geq 2}\Hom(k\e_j,k\e_i)\right).
\]
 But
this cohomology group vanishes because
\[
\dim_kH^0(G_F,k(j-i)=\dim_k H^2(G_F, k(j-i))=\dim_kH^0(G_F,
k(i-j+1))=(0)
\]
 for $j-i\geq 2$ as $\bar\omega$ has order greater than $n.$ Consequently
\[
H^1\left(G_F,\oplus_{j-i\geq 1}\Hom(k\e_j,k\e_i)\right)=
\oplus_iH^1\left(G_F,\Hom(k\e_{i+1},k\e_i)\right),
\]
 from which the statement about the tangent
space follows.
\end{proof}

\section{Constructing global deformation conditions with trivial dual Selemer group}

In this section, $F$ is any number field, and $k$ is a finite field of characteristic $\ell.$ Let
$\rb:G_F\longrightarrow GL_N(k)$ be a representation and let
 $\chi:G_F\longrightarrow W^\times$ be a character such that
$\chi\pmod{\ell}=\det\rb.$ We follow the conventions used in section \ref{prelim global}.

We shall say that  a global deformation condition $\cD$ with determinant
$\chi$ for $\rb$ \emph{satisfies   the tangent space inequality} if the inequality
\begin{align}\label{tangent space inequality}
 \sum_{v\in\Sigma(\cD)}\dim T{\cD_v}\geq (N-2)+\sum_{v\in\Sigma(\cD)}
\dim H^0\left(G_v,
\Ad^0\rb\right)
\end{align}
 holds. Recall that $\Sigma(\cD)$ is the finite set consisting of those
primes $v$ of $F$ where $\cD_v$ is not unramified, primes  lying above $\ell$ and
$\infty$, and primes where $\rb$ and $\chi$ are ramified. By Wiles' formula \ref{wiles formula}, $\cD$ as
 satisfies the tangent space inequality if
\[
\dim H^1_{\{T\cD_v\}}\left(F,\Ad^0\rb\right)-
\dim H^1_{\{T\cD_v^\perp\}}\left(F,\Ad^0\rb(1)\right)\geq N-2.
\]

\begin{definition}\label{defn big}
The residual representation $\rb:G_F\longrightarrow GL_N(k)$ is said to be a \emph{big representation} if
the following properties hold:
\begin{enumerate}
\item[(R1)] $\Ad^0\rb$ is absolutely
irreducible and
\[
H^1\left(\Gal(F(\Ad^0\rb)/F),\Ad^0\rb\right)
=H^1\left(\Gal(F(\Ad^0\rb(1))/F),\Ad^0\rb(1)\right) =\left(0\right).
\]
\item[(R2)] There is a non-archimedean prime $w_0$ of $F$ with $w_0\nmid \ell$ such that
\[
\rb|_{w_0}\sim
\begin{pmatrix}
\bar\omega^{N-1}&&&\\
&\bar\omega^{N-2}&& \\
&&\ddots&\\
&&& 1\end{pmatrix}\otimes \overline\eta
\]
 where $\overline\eta$ is an unramified character, and
the mod $\ell$ cyclotomic character $\bar\omega$ has order strictly greater than $N.$
\end{enumerate}
\end{definition}

Note that if $\rb$ is big, then R2 implies that $F$ does not contain all $\ell$-th
roots of unity, that $\Ad^0\rb$ and $\Ad^0\rb(1)$ are inequivalent, and that
$\ell>N.$ Also if $\rb$ is big and $k'$ is a finite extension of $k,$ then the
extension of scalars of $\rb$ to $GL_N(k')$ is again a big representation. Further
examples of big representations are supplied by the following proposition:

\begin{proposition}\label{bigrepns}
\mbox{}
\begin{enumerate}
\item[(i)] Let $F$ be a number field, and fix an integer $N\geq 2.$
There is a constant $C$ such that if $k$ is a finite field of characteristic $\ell>C,$ then
any representation $\rb:G_F\longrightarrow GL_N(k)$ with $\textrm{Im}\rb$ containing
$SL_N(k)$ is a big representation.
\item[(ii)] Let $\rb: G_\Q\longrightarrow GL_3(k)$ be a representation with
$\textrm{Im}\rb$ containing
$SL_3(k).$  Assume that $\ell$, the characteristic of $k$, is at least $7$. Further,
assume that if $\ell=7$ then the fixed field of $\Ad^0\rb$ does not contain
$\cos(2\pi/7)$. Then $\rb$ is a big representation.
\end{enumerate}
\end{proposition}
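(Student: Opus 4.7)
The plan is to verify conditions (R1) and (R2) of Definition \ref{defn big} directly in both (i) and (ii). In both parts the hypothesis $SL_N(k)\subseteq\mathrm{Im}(\rb)$ means the relevant Galois groups are close to $PSL_N(k)$, so the approach is to leverage known facts about the modular representation theory of $SL_N(k)$ and $PSL_N(k)$ and then pass from group cohomology to Galois cohomology via inflation-restriction.

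For (R1), absolute irreducibility of $\Ad^0\rb$ is immediate once $\ell\nmid N$, which holds throughout (for large $\ell$ in (i), and since $\ell\geq 7>3$ in (ii)): the adjoint action of $SL_N(k)$ on $\mathfrak{sl}_N(\overline k)$ is then irreducible. For the vanishing of $H^1(\Gal(F(\Ad^0\rb)/F),\Ad^0\rb)$, the classical input is that $H^1(SL_N(k),\mathfrak{sl}_N(k))=0$ for $\ell$ large enough relative to $N$ (e.g.\ via Cline--Parshall--Scott); one then passes to the quotient $\mathrm{Im}(\rb)/Z(\mathrm{Im}(\rb))$ by inflation-restriction, with abelian cokernel of order prime to $\ell$. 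For the twisted group, write $F(\Ad^0\rb(1))=F(\Ad^0\rb)(\mu_\ell)$ and apply Hochschild--Serre relative to the abelian cyclotomic subextension. For part (ii), one verifies $H^1(PSL_3(\mathbb{F}_7),\mathfrak{sl}_3(\mathbb{F}_7))=0$ by direct calculation; the hypothesis that the fixed field of $\Ad^0\rb$ does not contain $\cos(2\pi/7)$ ensures $F(\Ad^0\rb)\cap F(\mu_7)\neq F(\mu_7)^+$, which is precisely what is needed to prevent a spurious inflated class in the Hochschild--Serre spectral sequence for the twist.

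For (R2), one applies the Chebotarev density theorem to the extension cut out by the combined map $(\rb,\bar\omega):G_F\to GL_N(k)\times k^\times$. It suffices to produce a pair $(g,\alpha)$ in the image with $\alpha$ of order strictly greater than $N$ in $k^\times$ and $g$ conjugate in $GL_N(k)$ to $\mathrm{diag}(\alpha^{N-1}\eta,\alpha^{N-2}\eta,\ldots,\eta)$ for some $\eta\in k^\times$. Because $F(\mu_\ell)/F$ is abelian and $SL_N(k)$ is perfect, $SL_N(k)\subseteq\rb(G_{F(\mu_\ell)})$, so the fibre of $(\rb,\bar\omega)$ above any $\alpha\in\mathrm{Im}(\bar\omega)$ contains a full $SL_N(k)$-coset. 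Choose $\alpha$ of order $>N$ (possible for $\ell$ large in (i), and for $\ell\geq 7$ with $F=\mathbb{Q}$ in (ii), since then $\mathrm{Im}(\bar\omega)=(\mathbb{Z}/\ell)^\times$); the prescribed conjugacy class has determinant $\alpha^{N(N-1)/2}\eta^N$, and one then adjusts $\eta$ (and the underlying $g$ within its $SL_N(k)$-coset) so that the determinant condition is met, which is possible because $x\mapsto x^N$ covers a subgroup of $k^\times$ of index dividing $N$.

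The principal difficulty lies in the cohomology vanishing in (R1) for part (ii): here $\ell=7$ divides $|PSL_3(\mathbb{F}_7)|$ so the vanishing is not automatic, and one must either perform a direct computation in the principal block of $PSL_3(\mathbb{F}_7)$ or cite such a computation, and then pinpoint precisely why the exceptional configuration $\cos(2\pi/7)\in F(\Ad^0\rb)$ is the unique one in which the twisted cohomology $H^1(\Gal(F(\Ad^0\rb(1))/F),\Ad^0\rb(1))$ fails to vanish. The Chebotarev step in (R2) is, by comparison, routine once the image structure of $(\rb,\bar\omega)$ is understood.
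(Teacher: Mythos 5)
Your outline of (R1) is essentially sound (the paper likewise reduces to the Cline--Parshall--Scott vanishing $H^1(PSL_N(k),\mathfrak{sl}_N(k))=0$ for $\ell\geq 7$, and the twisted case follows by the same inflation--restriction since the relevant invariants vanish), but your treatment of (R2) has a genuine gap, and it sits exactly where the hypotheses of the proposition live. Having fixed $\alpha\in\mathrm{Im}(\bar\omega)$ of order $>N$, the fibre of the image of $(\rb,\bar\omega)$ over $\alpha$ is a single coset $g_0\,SL_N(k)$, so every element of it has determinant $\det g_0=\det\rb(\sigma)$ for $\sigma$ with $\bar\omega(\sigma)=\alpha$; meanwhile every matrix conjugate to $\mathrm{diag}(\alpha^{N-1}\eta,\ldots,\eta)$ has determinant in the single coset $\alpha^{N(N-1)/2}k^{\times N}$. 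You therefore need $\det\rb(\sigma)\equiv\alpha^{N(N-1)/2}\pmod{k^{\times N}}$, i.e.\ a compatibility between the character $\widetilde\chi:=\det\rb\bmod k^{\times N}$ and $\bar\omega$. Your remark that ``$x\mapsto x^N$ covers a subgroup of index dividing $N$'' does not give this: adjusting $\eta$ only moves the target determinant within one fixed coset of $k^{\times N}$, and moving $g$ within $g_0SL_N(k)$ does not change $\det g$ at all. Concretely, for $N=3$, $\ell=7$ and $\Q(\widetilde\chi)=\Q(\cos(2\pi/7))$ one has $\widetilde\chi=\phi\circ\bar\omega$ with $\phi$ nontrivial on every generator $\alpha$ of $\F_7^\times$, while $\alpha^{3}=-1$ is always a cube; so the compatibility fails for every $\alpha$ of order $>3$ and no choice of $\eta$ rescues it. Your argument, which never invokes the $\cos(2\pi/7)$ hypothesis in the (R2) step, would thus ``prove'' (R2) in a case where it cannot be established this way.

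Relatedly, you have misplaced the role of that hypothesis: in the paper it is used precisely for (R2), not for a spurious class in the twisted $H^1$. The paper's fix is to work with the projective representation $\widetilde\rho$ and restrict attention to Frobenii fixing $F(\widetilde\chi)$: the image of $\widetilde\rho\times\bar\omega$ contains $PSL_N(k)\times\F_\ell^{\times e}$ where $\F_\ell^{\times e}=\bar\omega(G_{F(\widetilde\chi)})$, and one takes $b=a^{2ed}$ so that both $\widetilde\chi$ is trivial there and $b^{N(N-1)/2}$ is an $N$-th power, making $\mathrm{diag}(b^{N-1},\ldots,b,1)$ land in $PSL_N(k)$. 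The price is that $b$ must have order $>N$ inside the possibly proper subgroup $\F_\ell^{\times e}$, which forces $2edN<\ell-1$; this is where the constant $C$ in (i) comes from, and for $N=3$, $F=\Q$ the only obstruction to $[\Q(\widetilde\chi,\bar\omega):\Q(\widetilde\chi)]\geq 4$ is $\ell=7$ with $\Q(\widetilde\chi)=\Q(\cos(2\pi/7))$ --- whence the excluded case. To repair your proof you must insert this determinant-matching analysis into the Chebotarev step.
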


\begin{proof} We fix some notation first:
\begin{align*}
\widetilde\rho&:=\textrm{the composite}\
\begin{CD}G_F@>{\rb}>>GL_N(k)@>>>PGL_N(k),\end{CD}\\
\widetilde\chi&:=\textrm{the determinant of}\ \widetilde\rho\ (\textrm{so}\
\widetilde\chi:G_F\longrightarrow k^\times/k^{\times
N}),\\
\lefteqn{F(\widetilde\chi, \bar\omega)\ (\textrm{resp.}\ F(\widetilde\chi),
F(\bar\omega), F(\widetilde\rho))}\\
&:=\textrm{the extension of}\ F\ \textrm{through which}\ \widetilde\chi\ \textrm{and}\
\bar\omega\ (\textrm{resp.}\
\widetilde\chi,\bar\omega, \widetilde\rho)\ \textrm{factors},\\
d&:=[F(\widetilde\chi):F],\\
(\ell-1)/e&:=[F(\widetilde\chi,\bar\omega):F(\widetilde\chi)].
\end{align*} We shall now
show that the proposition holds with $C=\max(5, 2edN+1).$

The extension $F(\widetilde\rho)/F(\widetilde\chi)$ has Galois group $PSL_N(k)$, and so
$F(\widetilde\rho),F(\widetilde\chi,\bar\omega)$ are linearly disjoint over
$F(\widetilde\chi).$ Since $\bar\omega\left(G_{F(\widetilde\chi)}\right)=\F_\ell^{\times
e},$ the image of the homomorphism
\[
\widetilde\rho\times \bar\omega: G_F\longrightarrow
PGL_N(k)\times \F_\ell^\times
\]
 contains $PSL_N(k)\times \F_\ell^{\times e}.$

Fix a generator $a$ of the cyclic group $\F_\ell^\times,$ and set $b=a^{2ed}.$ It is easy to see
that the projective image of the diagonal matrix
\[
\begin{pmatrix} b^{N-1}&&\\
&\ddots& &\\
&&b&\\
&&&1\end{pmatrix}
\]
 is an element of $PSL_N(k).$ By the Chebotarev density theorem, there is an
unramified prime $v$ such that
\[
\widetilde\rho(\Frob_v)=\begin{pmatrix} b^{N-1}&&\\
&\ddots& &\\
&&b&\\
&&&1\end{pmatrix}$$ and $\bar\omega(\Frob_v)=b.$ Hence
$$\rb|_{F_v}\sim\begin{pmatrix} \bar\omega^{N-1}&&\\
&\ddots& &\\
&&\bar\omega&\\
&&&1\end{pmatrix}\otimes \overline\eta
\]
 where $\overline\eta$ is an unramified character. Now
the order of $\bar\omega|_{F_v}$ is the order of $b,$ and this is greater than $N$ if
$2edN<\ell -1;$ so R2 holds. Since $\ell\geq 7,$ the vanishing of $H^1$ in  R1 holds by Theorem 4.2 of \cite{CPR}.

For the sepecial case when $N=3$ and $F=\Q$, note that $d=[\Q(\tilde\chi):\Q]$ is
either $1$ or $3$, and since $\ell\geq 7$ we must have
 $[\Q(\tilde\chi,\bar\omega):\Q(\tilde\chi)]\geq 4$ except in the case
  $\Q(\tilde\chi)=\Q(\cos(2\pi/7))$ (which we are excluding). Hence the image of
  $\widetilde\rho\times \bar\omega$ contains an element of the form
  \[
  \begin{pmatrix}
  a&0&0\\
  0&1&0\\
  0&0&a^{-1} \end{pmatrix}\times a
  \]
  where $a\in \F_\ell^\times$ has order at least $4$. The rest of the proof is then as before.

\end{proof}

\begin{remark} Keep the notation introduced in the proof of Proposition~\ref{bigrepns}. Since
$F(\widetilde\chi,\bar\omega)\supset\Q(\bar\omega),$ we have $nd\geq e$ and
$d\leq N$. Hence if
 $\ell>2[F:\Q]N^3+1$ then $2edN<\ell-1$  and so any
representation $\rb:G_F\longrightarrow GL_N(k)$ with $\textrm{Im}\rb $ containing
$SL_N(k)$ is a big representation.

\end{remark}

\begin{proposition}\label{smooth1}
Let $\rb:G_F\longrightarrow GL_N(k)$ be a big representation, and let
$\chi:G_F\longrightarrow W^\times$ be a character lifting $\det\rb.$ Fix a prime $w_0$ of $F$ such that  $\rb|_{w_0}$ satisfies condition R2  of Defiition \ref{defn big}.

If
$\cD_0$ is a global deformation condition with determinant $\chi$ for $\rb$
satisfying the tangent space inequality, then there  exists a global deformation condition
$\cD$ with determinant $\chi$ for $\rb$ with $\Sigma(\cD)\supseteq
\Sigma(\cD_0)$
 such that:
\begin{itemize}
\item  If $v\in\Sigma(\cD_0)$ then $\cD_{0v}=\cD_{v};$
 \item If $v\in\Sigma(\cD)-\Sigma(\cD_0)$ then $\cD_v$ is smooth and $\rb(\Frob_{v})=\rb
(\Frob_{w_0}).$ Furthermore,  the tangent space $T{\cD_v}$ satisfies
\[
H^1(F_v,\Ad^0\rb)=H^1_\textrm{nr}(F_v,\Ad^0\rb)\oplus T{\cD_v};
\]
\item We have $H^1_{\{T{\cD_v}^\perp\}}\left(F,\Ad^0\rb(1)\right)=(0)$.
\end{itemize}
\end{proposition}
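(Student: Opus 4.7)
The plan is to induct on $d := \dim H^1_{\{T\cD_v^\perp\}}(F,\Ad^0\rb(1))$, strictly decreasing $d$ at each step by adjoining a single new unramified prime carrying the Ramakrishna (Borel) condition $\B$ of subsection~\ref{ramakrishna}. The base case $d=0$ is trivial: take $\cD=\cD_0$. For the inductive step, pick a nonzero class $\psi$ in the current dual Selmer group.

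I would seek a prime $w\notin\Sigma(\cD)$ satisfying (a) $\rb(\Frob_w)=\rb(\Frob_{w_0})$, so that $\rb|_{G_{F_w}}$ takes the diagonal form required by subsection~\ref{ramakrishna} and, by Proposition~\ref{borel2}, the Borel condition $\B$ at $w$ is smooth with tangent space $T\B_w$ of dimension $N-1$; and (b) $\mathrm{res}_w\psi\neq 0$ in $H^1(F_w,\Ad^0\rb(1))$. To construct such $w$, I would apply Proposition~\ref{injective1} with $M=\Ad^0\rb(1)$, with $g$ the image of $\Frob_{w_0}$ in $\Gal(F(M)/F)$ (which acts semisimply on $M$ with $\dim M^g=N-1$ by R2), and with $V=k\psi\subseteq H^1(F,M)$; the hypothesis $H^1(\Gal(F(M)/F),M)=0$ is exactly R1. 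This yields a lift $\tilde g\in G_F$ of $g$ with $\psi(\tilde g)\neq 0$. Chebotarev density, applied inside $\Gal(F(\rb,\Ad^0\rb(1),\psi)/F)$, then produces infinitely many primes $w$ with $\Frob_w$ in the conjugacy class of $\tilde g$; any such $w$ satisfies (a) and (b).

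Once $w$ is chosen, a local computation exploiting R2 and the order of $\bar\omega$ at $w$ gives $\dim H^1(F_w,\Ad^0\rb)=2(N-1)$, with $H^1_\mathrm{nr}(F_w,\Ad^0\rb)$ concentrated on the $N-1$ diagonal trace-zero pieces and $T\B_w$ concentrated on the $N-1$ pieces immediately above the diagonal. These blocks are disjoint, so the splitting $H^1(F_w,\Ad^0\rb)=H^1_\mathrm{nr}(F_w,\Ad^0\rb)\oplus T\B_w$ holds by dimension count; dualising via local Tate duality gives the corresponding splitting $H^1(F_w,\Ad^0\rb(1))=H^1_\mathrm{nr}(F_w,\Ad^0\rb(1))\oplus T\B_w^\perp$. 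Since $\psi$ is unramified at $w$ (because $w\notin\Sigma(\cD)$), we have $\mathrm{res}_w\psi\in H^1_\mathrm{nr}(F_w,\Ad^0\rb(1))$, and condition (b) forces $\mathrm{res}_w\psi\notin T\B_w^\perp$. Replacing the unramified condition at $w$ by $\B$ therefore ejects $\psi$ from the new dual Selmer group, strictly reducing $d$. Because $\dim T\B_w=N-1=\dim H^0(F_w,\Ad^0\rb)$, Wiles' formula~\eqref{wiles formula} preserves $\dim T\cD-\dim T\cD^\perp$ under this modification, so the tangent space inequality remains intact and the induction proceeds until $d=0$.

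The main obstacle is the simultaneous Chebotarev condition: $\Frob_w$ must be specified both modulo $F(\rb)$ (for (a)) and modulo the splitting field of $\psi$ over $F(\rb)$ (for (b)). The required linear disjointness of these Galois extensions is precisely what R1 is designed to guarantee --- and this cohomological disjointness, rather than the local dimension count, is the crux of the proof.
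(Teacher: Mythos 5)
Your overall strategy --- inducting on the dimension of the dual Selmer group and killing one class $\psi$ at a time by imposing the Ramakrishna condition $\B$ at an auxiliary prime $w$ with $\rb(\Frob_w)=\rb(\Frob_{w_0})$ --- is the paper's strategy, and your local analysis at $w$ (the splitting $H^1(F_w,\Ad^0\rb)=H^1_\textrm{nr}(F_w,\Ad^0\rb)\oplus T\B_w$ and its dual) is correct. But there is a genuine gap in the ejection step. You arrange only two of the three conditions the paper imposes on $w$: you ensure $\mathrm{res}_w\psi\neq 0$, but you do not arrange that the restriction map
\[
H^1_{\{T\cD_{0v}\}}\left(F,\Ad^0\rb\right)\longrightarrow H^1_\textrm{nr}\left(F_{w},\Ad^0\rb\right)
\]
is \emph{surjective} (condition (b) in the paper's proof). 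Without this, ejecting $\psi$ does not imply that the dual Selmer group shrinks. The point is that, as your own dimension count shows, the new dual local condition $T\B_w^\perp$ is a \emph{complement} of $H^1_\textrm{nr}(F_w,\Ad^0\rb(1))$ inside $H^1(F_w,\Ad^0\rb(1))$, not a subspace of it; hence the new dual Selmer group is not a priori contained in the old one, and a global class whose restriction at $w$ is ramified but lands in $T\B_w^\perp$ could enter even as $\psi$ leaves. The paper handles this by comparing both groups with the auxiliary Selmer system $\mathcal{S}_v$ having $\mathcal{S}_{w}=(0)$: surjectivity plus Wiles' formula give $H^1_{\{\mathcal{S}_v^\perp\}}(F,\Ad^0\rb(1))=H^1_{\{T\cD_{0v}^\perp\}}(F,\Ad^0\rb(1))$, whence the new dual Selmer group sits inside the old one and is strictly smaller.

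This omission also explains why you never actually use the tangent space inequality except to observe that it is preserved. Its real role is to guarantee $\dim H^1_{\{T\cD_{0v}\}}(F,\Ad^0\rb)\geq N-1=\dim H^1_\textrm{nr}(F_w,\Ad^0\rb)$ whenever the dual Selmer group is nonzero, so that Theorem~\ref{Gal coh thm} can be applied \emph{simultaneously} to $M_1=\Ad^0\rb$ with $V_1$ an $(N-1)$-dimensional subspace of the Selmer group and to $M_2=\Ad^0\rb(1)$ with $V_2=k\psi$, producing a single prime $w$ at which both the surjectivity condition and $\mathrm{res}_w\psi\neq 0$ hold. Your application of Proposition~\ref{injective1} to $V=k\psi$ alone is therefore not enough; you need the two-module version for the pair of inequivalent modules $\Ad^0\rb$ and $\Ad^0\rb(1)$ (inequivalent by R2, which is one reason that hypothesis is needed here).
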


\begin{proof} If we can find
\[
0\neq \xi\in
H^1_{\{T\cD_{0,v}^\perp\}}\left(F,\Ad^0\rb(1)\right),
\]
 then using Wiles'
formula \ref{wiles formula}, we see that
\[
\dim_kH^1_{\{T\cD_{0v}\}}\left(F,\Ad^0\rb\right)\geq
N-1.
\]
 Using Theorem~\ref{Gal coh thm}, we can find a prime $w_1\not\in\Sigma_0$  such that
\begin{itemize}
\item[(a)] $\rb(\Frob_{w_1})=\rb (\Frob_{w_0})$ and
$\bar\omega(\Frob_{w_1})= \bar\omega(\Frob_{w_0});$\item[(b)] The restriction
\[
H^1_{\{T\cD_{0v}\}}\left(F,\Ad^0\rb\right)\longrightarrow
H^1_\textrm{nr}\left(F_{w_1},\Ad^0\rb\right)
\]
 is surjective; and, \item[(c)] The image
of $\xi$ when restricted to $H^1_\textrm{nr}\left(F_{w_1},\Ad^0\rb(1)\right)$ is
non-trivial.
\end{itemize}

Let $\cD_1$ be the deformation condition for $\rb$ with determinant $\chi$ with
the following local conditions: At primes not  equal to $w_1$, the local deformation conditions
$\cD_{0v}$ and $\cD_{1v}$ are the same. At the prime $w_1,$ the local deformation
condition $\cD_{1w_1}$ is determined by a Ramakrishna condition (cf
subsection~\ref{ramakrishna}). Thus $\cD_1$ is smooth at $w_1.$ The proof now proceeds as
in Lemma~1.2 of \cite{taylorartin}: Denote by $\{\mathcal{S}_v\}$ the local Selmer conditions
\[
\mathcal{S}_v=
\begin{cases}
T\cD_{0v},& \text{if $v\neq w_1$;}\\
(0),& \text{if $v=w_1$.}
\end{cases}
\]
Using Wiles' formula \ref{wiles formula},
\begin{align*}
\lefteqn{\dim\,H^1_{\{\mathcal{S}_v\}}\left(F,\Ad^0\rb\right)
-\dim\,H^1_{\{\mathcal{S}_v^\perp\}}\left(F,\Ad^0\rb(1)\right)}\\
&=\sum_{v\nmid\infty}\!(\dim\,\mathcal{S}_v-\dim\,H^0(F_v,\Ad^0\rb))-\sum_{v|\infty}H^0\left(F_v,
\Ad^0\rb\right)\\
&=\dim\,H^1_{\{T\cD_{0v}\}}\left(F,\Ad^0\rb\right)
-\dim\,H^1_{\{T\cD_{0v}^\perp\}}\left(F,\Ad^0\rb(1)\right)
-\dim\,H^1_\textrm{nr}\left(F_{w_1},\Ad^0\rb\right),
\end{align*}
 and
by (b), the sequence
\[
0\longrightarrow H^1_{\{\mathcal{S}_v\}}\left(F,\Ad^0\rb\right)
\longrightarrow H^1_{\{\mathcal{L}_{0v}\}}\left(F,\Ad^0\rb\right) \longrightarrow
H^1_\textrm{nr}\left(F_{w_1},\Ad^0\rb\right)\longrightarrow 0
\]
 is exact. Hence we have
\[
H^1_{\{\mathcal{S}_v^\perp\}}\left(F,\Ad^0\rb(1)\right)=
H^1_{\{T\cD_{0v}^\perp\}}\left(F,\Ad^0\rb(1)\right).
\]
 Using condition (c) along
with
$H^1(F_{w_1},\Ad^0\rb(1))=H^1_\textrm{nr}\left(F_{w_1},\Ad^0\rb(1)\right)
\oplus T\cD_{1w_1}^\perp,$ we see that
\[
0\neq \xi\not\in H^1_{\{T\cD_{1,v}^\perp\}}\left(F,\Ad^0\rb(1)\right)
\subseteq H^1_{\{T\cD_{0,v}^\perp\}}\left(F,\Ad^0\rb(1)\right),
\]
 and the
proposition follows inductively.
\end{proof}

An application of Theorem~\ref{thmboeckle} then gives the following:
\begin{theorem} We keep the notations and assumptions of Proposition~\ref{smooth1}
 above.
If for each $v\in\Sigma(\cD_0)$ the local deformation condition $\cD_{0v}$ is
smooth, then the universal deformation ring for deformations of type $\cD$ is a power
series ring over $W$ in
\[
\sum_{v\in\Sigma(\cD_0)}\dim_kT\cD_{0v}
-\sum_{v\in\Sigma(\cD_0)}\dim_kH^0(F_v,\Ad^0\rb)
\]
variables.\label{smooth2}\end{theorem}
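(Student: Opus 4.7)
The plan is to combine Proposition~\ref{smooth1}, B\"ockle's theorem~\ref{thmboeckle}, and Wiles' formula~\eqref{wiles formula}.

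First, I would apply Proposition~\ref{smooth1} to enlarge $\cD_0$ to a global deformation condition $\cD$ whose dual Selmer group vanishes, with $\Sigma(\cD)=\Sigma(\cD_0)\sqcup W$ for a finite set $W$ of auxiliary primes at which $\cD_w$ is of Ramakrishna type. By hypothesis every $\cD_{0v}$ is smooth, and every $\cD_w$ is smooth by Proposition~\ref{borel2}. Thus in the local presentations $R_v\cong W(k)[[T_{v,1},\ldots,T_{v,n_v}]]/J_v$ one has $J_v=(0)$ for each $v\in\Sigma(\cD)$. Combined with the vanishing of the dual Selmer group, B\"ockle's inequality~\eqref{loc global inequality} then forces $\gen(J)=0$, so
\[
R \;\cong\; W(k)[[T_1,\ldots,T_n]], \qquad n := \dim_k H^1_{\{T\cD_v\}}(F,\Ad^0\rb).
\]

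Next I would identify $n$ with the claimed integer. From Wiles' formula and the vanishing of the dual Selmer group,
\[
n \;=\; \sum_{v\leq\infty}\bigl(\dim T\cD_v - \dim H^0(F_v,\Ad^0\rb)\bigr).
\]
At each prime $v\notin\Sigma(\cD)$ the local condition is the unramified one with $v\nmid\ell\infty$, so by local Tate duality $\dim T\cD_v = \dim H^1_{\mathrm{nr}}(F_v,\Ad^0\rb) = \dim H^0(F_v,\Ad^0\rb)$, and the contribution is zero. The sum therefore collapses to a sum over $\Sigma(\cD_0)\sqcup W$.

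The final ingredient is that each Ramakrishna prime $w\in W$ contributes $0$ to this sum. By condition R2 the representation $\rb|_w$ is an unramified twist of $\operatorname{diag}(\bar\omega^{N-1},\ldots,1)$, and the assumption $\operatorname{ord}(\bar\omega|_w)>N$ forces the off-diagonal summands $k(i-j)$ of $\Ad^0\rb$ to have trivial $G_{F_w}$-invariants; hence $\dim H^0(F_w,\Ad^0\rb)=N-1$, coming solely from the trace-zero diagonal. On the other hand, Proposition~\ref{borel2} identifies $T\cD_w$ with $\bigoplus_{i=1}^{N-1}H^1(F_w,k(1))$, which has dimension $N-1$ by the same order hypothesis together with local Tate duality. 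Thus the contributions from $W$ cancel and the count reduces to
\[
n \;=\; \sum_{v\in\Sigma(\cD_0)}\bigl(\dim T\cD_{0v} - \dim H^0(F_v,\Ad^0\rb)\bigr),
\]
as required. The main care point is precisely this final bookkeeping at the auxiliary primes: one must verify that the fixed-determinant Borel tangent space and the local $H^0$ of $\Ad^0\rb$ match in dimension $N-1$, after which the theorem follows formally from B\"ockle and Wiles.
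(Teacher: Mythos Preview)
Your argument is correct and matches the paper's intended proof: the paper simply records Theorem~\ref{smooth2} as ``an application of Theorem~\ref{thmboeckle}'' after Proposition~\ref{smooth1}, and your write-up spells out exactly that application (smooth local conditions plus trivial dual Selmer force $J=(0)$, then Wiles' formula identifies the number of variables). Your explicit check that each auxiliary Ramakrishna prime contributes zero to the Wiles sum---via $\dim T\cD_w=\dim H^0(F_w,\Ad^0\rb)=N-1$---is precisely the bookkeeping the paper leaves implicit; note that one can also read this off directly from the decomposition $H^1(F_w,\Ad^0\rb)=H^1_{\mathrm{nr}}(F_w,\Ad^0\rb)\oplus T\cD_w$ in Proposition~\ref{smooth1} together with local duality, which gives $\dim T\cD_w=\dim H^2(F_w,\Ad^0\rb)=\dim H^0(F_w,\Ad^0\rb(1))=N-1$ without invoking Proposition~\ref{borel2}.
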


\section{Lifting Galois representations to characteristic 0}

\subsection{Proof of the main theorem} \label{proof of main thm}
 We now fix a number field $F,$ an integer $N\geq 3,$ and
 a finite field $k$   of characteristic $\ell\geq N^{3[F:\Q]N}.$
We first prove a variation of the main theorem: Suppose we are given  a  representation
 $ \rb:G_F\longrightarrow GL_N(k)$
 and a lifting
 $\chi:G_F\longrightarrow W^\times$
 of $\det\rb$ which is minimally ramified away from $\ell.$
 Assume that $\rb$ satisfies the following hypotheses:
\begin{enumerate}
\item[(H0)] For any open subgroup $H\leq G_F$ all irreducible components of the semi-simplification
of $\rb|_H$ are absolutely irreducible;
\item[(H1)] $\rb:G_F\longrightarrow
GL_N(k)$ is a big representation;
\item[(H2)] $\rb$ is not totally even; and
\item[(H3)]
For every prime $v|\ell,$ we have
\[
H^0\left(F_v,\Ad^0\rb(1)\right)=\left(0\right);
\]
\end{enumerate}
\begin{claim} \label{main thm claim}
Under the above assumptions,  there is a global deformation condition $\cD$
with determinant $\chi$ for $\rb$ such that the universal deformation ring is a power
series ring over $W$ in
\[
\sum_{v\in\Sigma(\cD)}\dim_k\cD_{v}-
\sum_{v\in\Sigma(\cD)}\dim_kH^0(F_v,\Ad^0\rb)\geq N-2
\]
variables.
\end{claim}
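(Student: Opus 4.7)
The plan is to construct an initial global deformation condition $\cD_0$ with determinant $\chi$ that is smooth at every place and satisfies the tangent space inequality \eqref{tangent space inequality}, and then invoke Proposition \ref{smooth1} followed by Theorem \ref{smooth2}. The former refines $\cD_0$ to a condition $\cD\supseteq\cD_0$ with trivial dual Selmer group whose new local components (at the Ramakrishna primes) remain smooth by Proposition \ref{borel2}; the latter then delivers the desired power series structure.

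For the local components of $\cD_0$: at each finite prime $v\nmid\ell$, apply Theorem \ref{main local}(b) to obtain a smooth local condition $\cD_{0,v}$ with $\dim T\cD_{0,v}=\dim H^0(F_v,\Ad^0\rb)$. Hypothesis (H0) supplies the required absolute irreducibility of components. The bound $\ell>N^{3[F:\Q]N}$ guarantees $[F_v(\zeta_\ell):F_v]\geq 3N$ whenever $v$ has residue characteristic $p\leq N$: that degree equals the multiplicative order of $q_v$ modulo $\ell$, and $q_v\leq N^{[F:\Q]}$, so $q_v^{3N}<\ell$ excludes any smaller order. At $v\mid\ell$, take $\cD_{0,v}$ to be the unrestricted deformation condition with determinant $\chi$: by (H3) and local Tate duality, $H^2(F_v,\Ad^0\rb)=H^0(F_v,\Ad^0\rb(1))^\vee=0$, so this is smooth. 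At infinite places, take again the unrestricted condition with determinant $\chi$; smoothness is trivial since $\ell$ is odd.

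To verify the tangent space inequality, set
\[
S := \sum_{v\leq\infty}\bigl(\dim T\cD_{0,v}-\dim H^0(F_v,\Ad^0\rb)\bigr).
\]
Finite primes $v\nmid\ell$ contribute $0$ by well-behavedness. Each $v\mid\ell$ contributes $[F_v:\Q_\ell](N^2-1)$ via the local Euler characteristic formula combined with $H^2=0$, so these sum to $[F:\Q](N^2-1)$. Each complex place contributes $-(N^2-1)$; each real place $v$ contributes $-(a_v^2+b_v^2-1)$, where $a_v,b_v$ are the multiplicities of $+1$ and $-1$ as eigenvalues of $\rb(c_v)$ (so $a_v+b_v=N$). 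Using $[F:\Q]=r_1+2r_2$ these combine to
\[
S = r_2(N^2-1)+\sum_{v\text{ real}}2a_vb_v.
\]
Hypothesis (H2) produces a real place $v_0$ with $\rb(c_{v_0})$ non-scalar, forcing $a_{v_0},b_{v_0}\geq 1$ and hence $2a_{v_0}b_{v_0}\geq 2(N-1)\geq N-2$ for $N\geq 3$. Therefore $S\geq N-2$.

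Applying Proposition \ref{smooth1} we obtain $\cD\supseteq\cD_0$ with trivial dual Selmer group. At each Ramakrishna prime $w$ introduced by that process, $\rb|_w$ satisfies condition R2, and a direct computation using the arithmetic of $\bar\omega$ at $w$ gives $\dim H^0(F_w,\Ad^0\rb)=\dim H^2(F_w,\Ad^0\rb)=N-1$; together with the direct-sum decomposition $H^1(F_w,\Ad^0\rb)=H^1_{\mathrm{nr}}(F_w,\Ad^0\rb)\oplus T\cD_w$ from Proposition \ref{smooth1}, this forces $\dim T\cD_w-\dim H^0(F_w,\Ad^0\rb)=0$. Hence $\sum_{v\in\Sigma(\cD)}(\dim T\cD_v-\dim H^0(F_v,\Ad^0\rb))$ equals $S$, and Theorem \ref{smooth2} produces the required power series ring in $S\geq N-2$ variables. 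The main technical obstacle lies in producing well-behaved local conditions at primes of small residue characteristic, which is precisely the content of Theorem \ref{main local} and accounts for the large-$\ell$ hypothesis.
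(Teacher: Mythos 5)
Your proposal is correct and follows essentially the same route as the paper: the same choice of local conditions (well-behaved conditions from Theorem \ref{main local} away from $\ell$, the determinant-only condition at $v\mid\ell$ made smooth by (H3) and duality), the same Euler-characteristic bookkeeping at $\ell$ and $\infty$ using the non-scalar complex conjugation to verify the tangent space inequality, and the same appeal to Proposition \ref{smooth1} and Theorem \ref{smooth2}. Your extra details --- the explicit verification that $\ell>N^{3[F:\Q]N}$ forces $[F_v(\zeta_\ell):F_v]\geq 3N$, and the check that the Ramakrishna primes contribute zero to the count of variables --- are points the paper leaves implicit, and they are correct.
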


\medskip
The main theorem then follows immediately from the claim by extension of scalars,
Proposition \ref{bigrepns}, and Lemma \ref{scalar extn}.
\bigskip
\begin{proof}[Proof of claim \ref{main thm claim}] Observe that $\ell\geq N^{3[F:\Q]N}$ implies
$[F_v(\zeta_\ell):F_v]\geq 3N$ for every $v|N!.$ Now let $\cD_0$ be the deformation
condition with determinant $\chi$ for $\rb$ given by the following local conditions:
\begin{itemize}
 \item At a prime $v|\ell,$ the local deformation condition is given by
the single restriction that the determinant is $\chi.$  \item At a prime $v$ where
$\rb$ is ramified, the local condition $\cD_{0v}$ is the one given by
Theorem~\ref{main local} \item $\cD_{0v}$ is unramified at all other primes.
\end{itemize}

Let $v$ be a prime of $F$ lying above $\ell.$ By assumption H3 and local duality, we have
\[
\dim H^2\left(F_v,\Ad^0\rb\right)=\dim
H^0\left(F_v,\Ad^0\rb(1)\right)=0.
\]
 Hence the deformation condition $\cD_{0v}$
is smooth and, by the local Euler characteristic formula, we have
\[
\dim T\cD_{0v}-\dim
H^0\left(F_v,\Ad^0\rb\right)=[F_v:\Q_\ell](N^2-1).
\]
 Adding up over primes above $\ell$, we get
\[
\sum_{v|\ell}\dim T\cD_{0,v}-\sum_{v|\ell}\dim
H^0\left(F_v,\Ad^0\rb\right)= [F:\Q](N^2-1).
\]

We are assuming that $\rb$ is not totally even. We can therefore find  a real prime
$\infty_\R$ of $F,$ a choice $c\in G_F$ of complex conjugation under the embedding given by
$\infty_\R$ such that $\rb(c)$ is not a scalar. Let $m$ be the number of $+1$
eigenvalues of $\rb(c).$ Now
\begin{align*}
\lefteqn{\sum_{v|\infty}\dim H^0\left(F_v, \Ad^0\rb\right)}\\
&\leq ([F:\Q]-1)(N^2-1)+\dim
H^0\left(F_{\infty_\R},\Ad^0\rb\right)\\
&=([F:\Q]-1)(N^2-1)+ m^2+(N-m)^2-1.
\end{align*}

Finally, $\dim T\cD_{0v}=\dim H^0(F_v,\Ad^0\rb)$ for $v\in\Sigma(\cD_0),
v\nmid \ell\infty.$ Hence
\begin{align*}
\lefteqn{\sum_{v\in\Sigma(\cD_0)}\dim T\cD_{0v}-\sum_{v\in\Sigma(\cD_0)}\dim
H^0\left(F_v,\Ad^0\rb\right)}\\
&\geq [F:\Q](N^2-1)-([F:\Q]-1)(N^2-1)- m^2-(N-m)^2+1\\
&=2m(N-m).
\end{align*}
From $(m-1)(N-m-1)\geq 0,$ we get $m(N-m)\geq N-1,$ and consequently
$\cD_0$ satisfies the tangent space inequality.

Applying Cor~\ref{smooth2}, we obtain a  deformation condition $\cD$ with determinant
$\chi$ such that the universal deformation ring is a power series ring over $W$ in
\begin{align*}
\lefteqn{\sum_{v\in\Sigma(\cD)}\dim_k\cD_{v}-
\sum_{v\in\Sigma(\cD)}\dim_kH^0(F_v,\Ad^0\rb)}\\
&=\sum_{v\in\Sigma(\cD_0)}\dim_k\cD_{v}-
\sum_{v\in\Sigma(\cD_0)}\dim_kH^0(F_v,\Ad^0\rb)\\
&\geq N-2
\end{align*}
variables. \end{proof}

\subsection{A lifting result when $N=3$ and $F=\Q$}
 We now discuss how to improve on the main theorem  for the case when $N=3$ and
$F=\Q$. From here on, $k$ is a finite field of characteristic $\ell$. An odd representation is one with complex conjugation having two distinct eigenvalues.

\begin{theorem} \label{lifting GL3}
Let $\rb:G_\Q\longrightarrow GL_3(k)$ be an odd representation with
image of $\rb$ containing  $SL_3(k)$ and let $\chi:G_\Q\longrightarrow W^\times$ be a character lifting the determinant of $\rb$. Suppose that $\ell\geq 7$, and further assume that if $\ell=7$ then the fixed field of $\Ad^0\rb$ does not
contain $\cos(2\pi/7)$.
Then there is a continuous representation
$\rho:G_\Q\longrightarrow GL_3(W)$ with determinant $\chi$, unramified outside finitely
many primes, such that $\rho\pmod{\ell}=\rb$.

\end{theorem}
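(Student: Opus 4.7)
The plan is to follow the strategy of the main theorem (section \ref{proof of main thm}) while overcoming the two hypotheses that have been relaxed here, namely the bound $\ell > 3^9$ and the local hypothesis (H3) at $\ell$.

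First I would verify that $\rb$ is a big representation in the sense of Definition \ref{defn big}. This is exactly what Proposition \ref{bigrepns}(ii) provides under the given hypotheses ($\ell \geq 7$, plus the $\cos(2\pi/7)$ condition when $\ell = 7$), supplying the Ramakrishna-diagonal auxiliary prime $w_0$ used to kill dual Selmer classes. Oddness of $\rb$ ensures it is not totally even and, more importantly, gives a complex conjugation with two distinct eigenvalues, which is the input producing the tangent-space surplus $2m(N-m) \geq N-1$ in the proof of Claim \ref{main thm claim}.

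Next I would construct the global deformation condition $\cD_0$. Away from $\ell$, Theorem \ref{main local} supplies well-behaved local conditions at every ramified prime $v$ provided $[F_v(\zeta_\ell):F_v] \geq 3N$ whenever the residue characteristic is $\leq N$ and $\rb$ is wildly ramified. For $N = 3$, $F = \Q$ and $\ell \geq 7$ the small primes are $p = 2, 3$, where this inequality can fail; at these finitely many exceptional pairs $(p, \ell)$ one must classify by hand the possible wildly ramified three-dimensional $\rb|_{G_{\Q_p}}$ and build a smooth local condition directly using the tools of sections \ref{tamely ramified}--\ref{induced repn}. At the prime $\ell$, since we no longer assume $H^0(\Q_\ell, \Ad^0\rb(1)) = 0$, the unrestricted condition may fail to be smooth; one performs a case analysis of those three-dimensional residual representations admitting a nonzero $G_{\Q_\ell}$-equivariant map to $\rb(1)$ and imposes, in each case, an appropriate Borel or tensor-product sub-condition that is smooth and large enough to preserve the tangent-space inequality \eqref{tangent space inequality}.

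With these local ingredients in place, the remainder is identical to the proof of Claim \ref{main thm claim}: check the tangent space inequality using Wiles' formula and the oddness-derived surplus, then apply Proposition \ref{smooth1} (Ramakrishna tweaking at auxiliary primes $w_i$ with $\rb(\Frob_{w_i}) = \rb(\Frob_{w_0})$) and Theorem \ref{smooth2} to produce a global deformation condition whose universal ring is a power series over $W$; any $W$-point then yields the desired lift $\rho$. The main obstacle is the local analysis in residual characteristic $\ell$: the main theorem was explicitly designed to sidestep this, and confronting it directly---even only in dimension three---requires enumerating and handling separately each semi-simple shape of $\rb|_{G_{\Q_\ell}}$ admitting a Tate-twist self-map, verifying in each exceptional case that one can still write down a smooth local deformation condition whose tangent space is large enough to feed into Proposition \ref{smooth1}.
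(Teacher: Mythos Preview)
Your outline is correct and matches the paper's strategy: verify bigness via Proposition~\ref{bigrepns}(ii), build smooth local conditions with extra care at $2$, $3$, and $\ell$, check the tangent space inequality, and then invoke Proposition~\ref{smooth1} and Theorem~\ref{smooth2}.

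A few points where the paper's execution differs from your description are worth noting. First, the paper does not organise the analysis at $p=2,3$ around the wild-ramification hypothesis of Theorem~\ref{main local}; instead it treats $p\in\{2,3,\ell\}$ uniformly by asking whether $H^2(\Q_p,\Ad^0\rb)=0$. When it is nonzero one has a nontrivial $G_{\Q_p}$-map $\rb_p\to\rb_p(1)$, and a short argument classifies $\rb_p$ into three explicit shapes (upper-triangular with characters $1,\bar\omega,\bar\omega^2$; upper-triangular with an intermediate character $\varepsilon$; or $\rb_p\sim\rb_p(1)$, forcing $\bar\omega^3=1$). For $p=2,3$ each shape is handled either as a twist of a tamely ramified representation (so Theorem~\ref{main local} applies after all) or, when inertia has prime-to-$\ell$ image, by the construction of \cite[Example~E1]{taylorartin}. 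Second, at $\ell$ the paper does not use tensor-product conditions but rather parabolic (block-upper-triangular) ones: it specifies a subspace $N\subset\Ad^0\rb$ of trace-zero upper-triangular or $(2,1)$- or $(1,2)$-block-triangular matrices, checks from the composition series that $N$ has no quotient $k(1)$ (so $H^2(\Q_\ell,N)=0$, giving smoothness) and that $\Ad^0\rb/N$ has no invariants, and then reads off $\dim T\cD_{0\ell}-\dim H^0(\Q_\ell,\Ad^0\rb)=\dim N\geq 5$. This single inequality, together with $\dim H^0(\R,\Ad^0\rb)=4$ from oddness, is what makes the tangent space inequality go through; your invocation of the generic surplus $2m(N-m)$ from Claim~\ref{main thm claim} is not quite the computation used here, since the contribution at $\ell$ is now $\dim N$ rather than $[F:\Q](N^2-1)$.
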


\begin{proof}
As in the proof of the main theorem, we may extend scalars and assume that $\rb:G_\Q\longrightarrow GL_3(k)$ satisfies the three conditions H0, H1 (by Proposition~\ref{bigrepns}) and H2 of the preceding section \ref{proof of main thm}. Thus $\rb$ is a big odd representation  such that for any open subgroup $H\leq G_\Q$ all irreducible components in the semi-simplification
of $\rb|_H$ are absolutely irreducible.
We will now find a global deformation condition
$\cD_0=\{\cD_{0p}\}$ with determinant $\chi$, smooth local conditions and satisfying the
tangent space inequality. There is no issue at a primes away from $2$, $3$ and $\ell$:
 If $p>3$ and
$p\neq \ell$  we take $\cD_{0p}$ to be the one obtained through
Theorem~\ref{main local} or the unramified deformation condition depending on the
ramification of $\rb$ at $p$.

Now let $p$ be one of $2$, $3$ or $\ell$. We write $\rb_p$ for the restriction of $\rb$ to $\Q_p$.
 If $H^2(\Q_p,\Ad^0\rb_p)=(0)$ then we take $\cD_{0p}$ to
be the class of liftings with determinant $\chi$ (cf. Example~\ref{ex 1}). Thus
$\cD_{0p}$ is smooth and
\[
\dim T\cD_{0p}=\delta_{p\ell}\dim \Ad^0\rb +\dim H^0(\Q_p,\Ad^o\rb),
\]
 and we can proceed as in the proof of the main theorem.

So lets assume now that $H^2(\Q_p,\Ad^0\rb)\neq (0)$. Thus
\[
\Hom_{G_{\Q_p}}(\rb,\rb(1))\cong H^0(\Q_p,\Ad^0\rb(1))\neq (0).
\]
Consideration of a non-trivial morphism from $\rb_p$ to $\rb_p(1)$ then implies that
we may assume, conjugating if necessary, one  of the following holds.
\begin{description}
\item[Type A] $\rb_p=\begin{pmatrix}
1&*&*\\
0&\bar\omega&*\\
0&0&\bar\omega^2
\end{pmatrix}\eta$.
\item[Type B] $\rb_p=\begin{pmatrix}
1&x&y\\
0&\varepsilon &z\\
0&0&\bar\omega
\end{pmatrix}\eta$ where $x$ is non-split if $\varepsilon=\bar\omega^{-1}$ and $z$ is
non-split if $\varepsilon=\bar\omega^2.$
\item[Type C] $\rb_p\sim\rb_p(1)$. Comparing determinants gives $\bar\omega^3=1$ and
so we have $(p,\ell)=(2,7)$ or $(3,13)$.  We may assume that $\rb_p$ is absolutely irreducible (the reducible case is covered already). In this case,
$\rb_p$ is induced from a character of $G_{\Q_p(\zeta_\ell)}.$
\end{description}

Suppose now that $p=2$ or $3$. If $\rb_p$ is of Type A or of Type B with $\varepsilon$
unramified then $\rb_p$ is a twist of a tamely ramified representation and we can use
Theorem~\ref{main local} to get a smooth deformation condition $\cD_{0p}$ with
determinant $\chi$ and $\dim T\cD_{0p}=\dim H^0(\Q_p,\Ad^0\rb)$.

If $\rb_p$ is of Type C or of Type B with $\varepsilon$ ramified, then  $\ell$ does not
divide the order of the image of inertia under $\rb_p$. (If $\rb_p$ is Type B with
$\varepsilon$ ramified then we can assume $x=z=0$
since $H^1(\Q_p,k(\psi^{-1}))$ and $H^1(\Q_p,k(\psi\bar\omega^{-1}))$ are both trivial,
and then we can make $y=0$ because $H^1(\Q_p,k(\bar\omega^{-1}))=(0)$.) The construction and argument then proceeds as in \cite[Example E1]{taylorartin}:
Take
$K$ to be fixed field of $\rb_p$ adjoined the maximal unramified extension of $\Q_p$,
and then take $\cD_{0p}$ to be lifts of $\rb_p$ which factor through
$\textrm{Gal}(K/\Q_p)$ and determinant $\chi$. This is a smooth deformation condition
and its tangent space has dimension $\dim H^0(\Q_p,\Ad^0\rb)$.

We now choose local conditions at $\ell$ and define a $G_{\Q_\ell}$ subspace $N$ of
$\Ad^0\rb$ as follows. (The same constructions work when $p=2$ or $3$ provided $\bar\omega^3\neq 1$.)
\begin{enumerate}
\item[(a)] Suppose $\rb_\ell$ is either of Type A or of Type B with $\varepsilon$ different from $1$ or $\bar\omega$ or $ \bar\omega^{-1}$ or $\bar\omega^2$. Take $\cD_{0\ell}$ to be upper triangular deformations of $\rb$ with determinant $\chi$ and set $N$ to be the space of trace $0$ upper triangular matrices in $\Ad^0\rb$.
    \item[(b)] Suppose $\rb_\ell$ is of Type B and $\varepsilon$ is $1$ or $\bar\omega^{-1}$. Take $\cD_{0\ell}$ to be deformations of the form $\begin{pmatrix} *&*&*\\ *&*&*\\ 0&0&*\end{pmatrix}$  with determinant $\chi$, and set $N$ to be the matrices of the same form in $\Ad^0\rb$.
        \item[(c)] Suppose $\rb_\ell$ is of Type B and $\varepsilon$ is $\bar\omega$ or $\bar\omega^2$. Take $\cD_{0\ell}$ to be deformations of the form $\begin{pmatrix} *&*&*\\ 0&*&*\\ 0&*&*\end{pmatrix}$  with determinant $\chi$, and set $N$ to be the matrices of the same form in $\Ad^0\rb$.
\end{enumerate}
We leave out the verification that these are in fact deformation conditions as defined in Section~\ref{gen DC}.
The tangent space $T\cD_{0\ell}\subseteq H^1(\Q_\ell,\Ad^0\rb)$ is the image of $H^1(\Q_\ell, N)$ in $H^1(\Q_\ell,\Ad^0\rb)$ under the inclusion $N\subseteq \Ad^0\rb$.

By considering the composition series for
$N$, we see that $N$ has no quotient isomorphic to $k(1)$. (When $\rb$ is of Type B and $\varepsilon$ is $\bar\omega^{-1}$ or $\bar\omega^2$ we need to use the non-splitting of $x$ and $z$.) Consequently $H^2(\Q_\ell, N)=(0)$ and $\cD_{0\ell}$ is a smooth deformation condition (cf \cite[Theorem 1.2]{boeckle2}). Also the composition series for $\Ad^0\rb/N$ shows that $H^0(\Q_\ell, \Ad^0\rb/N)=(0)$. Hence, from the exact sequence
\[
0\longrightarrow N\longrightarrow \Ad^0\rb\longrightarrow \Ad^0\rb/N\longrightarrow 0
\]
we see that $H^0(\Q_\ell, \Ad^0\rb)\cong H^0(\Q_\ell, N)$ and $T\cD_{0\ell}\cong H^1(\Q_\ell, N)$.
It then follows, by the local Euler characteristic formula, that
\begin{equation}\label{inequality}
 \dim T\cD_{0\ell}-\dim H^0(\Q_\ell, \Ad^0\rb)
=\dim N+ \dim H^2(\Q_\ell ,N)=\dim N\geq 5.
\end{equation}

The result follows from Theorem~\ref{smooth2}  once we verify that the deformation condition $\cD_0=\{\cD_{0p}\}$
satisfies the tangent space inequality~\eqref{tangent space inequality}. Away from $\ell$ and $\infty$ we have $\dim T\cD_{0p}=\dim H^0(\Q_p,\Ad^0\rb)$. So we need to verify that
\begin{equation*}
\dim T\cD_{0\ell}\geq 1 +\dim H^0(\Q_\ell,\Ad^0\rb) +\dim H^0(\R,\Ad^0\rb),
\end{equation*}
and that follows from \eqref{inequality} since $\dim H^0(\R,\Ad^0\rb)=4$ as $\rb$ is not totally even.
\end{proof}

\bibliographystyle{amsplain}
\bibliography{references}

\end{document}